\newtheorem{theorem}{Theorem}[section]
\newtheorem{lemma}[theorem]{Lemma}
\newtheorem{corollary}[theorem]{Corollary}
\newtheorem{proposition}[theorem]{Proposition}
\theoremstyle{definition}
\newtheorem{definition}[theorem]{Definition}
\newtheorem{example}[theorem]{Example}
\newcommand{\nc}{\newcommand}
\newcommand{\delete}[1]{}
\nc{\tred}[1]{\textcolor{red}{#1}}
\nc{\tblue}[1]{\textcolor{blue}{#1}} \nc{\tgreen}[1]{\textcolor{green}{#1}} \nc{\tpurple}[1]{\textcolor{purple}{#1}} \nc{\btred}[1]{\textcolor{red}{\bf #1}} \nc{\btblue}[1]{\textcolor{blue}{\bf #1}} \nc{\btgreen}[1]{\textcolor{green}{\bf #1}} \nc{\btpurple}[1]{\textcolor{purple}{\bf #1}}
\newcommand{\efootnote}[1]{}
\nc{\mlabel}[1]{\label{#1}}  
\nc{\mcite}[2][]{\cite[#1]{#2}}  
\nc{\mref}[1]{\ref{#1}}  
\nc{\mbibitem}[1]{\bibitem{#1}} 
\nc{\mlabel}[1]{\label{#1}  
{\hfill \hspace{1cm}{\bf{{\ }\hfill(#1)}}}}
\nc{\mcite}[1]{\cite{#1}}  
\nc{\mref}[1]{\ref{#1}{{\bf{{\ }(#1)}}}}  
\nc{\mbibitem}[1]{\bibitem[\bf #1]{#1}} 
\renewcommand\geq{\geqslant}
\renewcommand\leq{\leqslant}
\renewcommand\bar[1]{\overline{#1}}
\nc{\nz}{\varepsilon}
\nc{\Id}{\mathrm{Id}}
\nc{\map}[2]{{#2}^{#1}}
\nc{\gp}{B}
\nc{\Irr}{\mathrm{Irr}}
\nc{\vx}{\sigma} \nc{\vy}{\tau} \nc{\dvx}{\sigma^{(1)}} \nc{\dvy}{\tau^{(1)}} \nc{\done}{\vep} \nc{\mcitep}[1]{\mcite{#1}} \nc{\wt}{\mathrm{wt}} \nc{\bre}[1]{|#1|} \nc{\mapmonoid}{\frakM} \nc{\disjoint}{\frakM'}
\nc{\ncpoly}[1]{\langle #1\rangle}  
\nc{\mapm}[1]{\lfloor\!|{#1}|\!\rfloor}
\nc{\diff}[1]{{}^\NC\{ #1 \}} \nc{\disj}[1]{\{{#1}\}'} \nc{\mdisj}[1]{\frakM'(#1)} \nc{\brho}{\bar{\rho}} \nc{\om}{\bar{\frakm}} \nc{\frakn}{\mathfrak n} \nc{\ddeg}[1]{^{(#1)}} \nc{\opset}{X} \nc{\genset}{{Z}} \nc{\NC}{\mathrm{{NC}}} \nc{\leaf}{\mathrm{leaf}} \nc{\twig}{\mathrm{twig}} \nc{\fe}{\mathrm{fl}} \nc{\munderline}[1]{#1} \nc{\bo}{o} \nc{\dep}{\mathrm{depth}} \nc{\ofe}{\mathrm{ofl}} \nc{\dfe}{\mathrm{dfe}} \nc{\fex}{\mathrm{fex}} \nc{\dl}{\mathrm{dlex}} \nc{\db}{\mathrm{db}} \nc{\lex}{\mathrm{lex}} \nc{\clex}{\mathrm{clex}} \nc{\dgp}{\mathrm{dgp}} \nc{\dgx}{\mathrm{dgx}} \nc{\br}{\mathrm{br}} \nc{\obd}{\mathrm{odb}} \nc{\ob}{\mathrm{ob}}
\nc{\pie}{\mathrm{PIE}}
\nc{\rbo}{\mathrm{RBO}}
\nc{\supp}{\mathcal{S}}
\nc{\nul}{\mathcal{Z}}
\nc{\bin}[2]{ (_{\stackrel{\scs{#1}}{\scs{#2}}})}  
\nc{\binc}[2]{ \left (\!\! \begin{array}{c} \scs{#1}\\
    \scs{#2} \end{array}\!\! \right )}  
\nc{\bincc}[2]{  \left ( {\scs{#1} \atop
    \vspace{-1cm}\scs{#2}} \right )}  
\nc{\bs}{\bar{S}} \nc{\cosum}{\sqsubset} \nc{\la}{\longrightarrow} \nc{\rar}{\rightarrow} \nc{\dar}{\downarrow} \nc{\dprod}{**} \nc{\dap}[1]{\downarrow \rlap{$\scriptstyle{#1}$}} \nc{\md}[1]{\bar{#1}} \nc{\uap}[1]{\uparrow \rlap{$\scriptstyle{#1}$}} \nc{\defeq}{\stackrel{\rm def}{=}} \nc{\disp}[1]{\displaystyle{#1}} \nc{\dotcup}{\ \displaystyle{\bigcup^\bullet}\ } \nc{\gzeta}{\bar{\zeta}} \nc{\hcm}{\ \hat{,}\ } \nc{\hts}{\hat{\otimes}} \nc{\barot}{{\otimes}} \nc{\free}[1]{\bar{#1}} \nc{\uni}[1]{\tilde{#1}} \nc{\hcirc}{\hat{\circ}} \nc{\leng}{\ell} \nc{\lleft}{[} \nc{\lright}{]} \nc{\lc}{\lfloor} \nc{\rc}{\rfloor}
\nc{\lb}{[} 
\nc{\rb}{]} 
\nc{\curlyl}{\left \{ \begin{array}{c} {} \\ {} \end{array}
    \right.  \!\!\!\!\!\!\!}
\nc{\curlyr}{ \!\!\!\!\!\!\!
    \left. \begin{array}{c} {} \\ {} \end{array}
    \right \} }
\nc{\longmid}{\left | \begin{array}{c} {} \\ {} \end{array}
    \right. \!\!\!\!\!\!\!}
\nc{\onetree}{\bullet} \nc{\ora}[1]{\stackrel{#1}{\rar}}
\nc{\ola}[1]{\stackrel{#1}{\la}}
\nc{\ot}{\otimes} \nc{\mot}{{{\boxtimes\,}}} \nc{\otm}{\overline{\boxtimes}} \nc{\sprod}{\bullet} \nc{\scs}[1]{\scriptstyle{#1}} \nc{\mrm}[1]{{\rm #1}} \nc{\msum}{\sum\limits}
\nc{\margin}[1]{\marginpar{\rm #1}}   
\nc{\dirlim}{\displaystyle{\lim_{\longrightarrow}}\,} \nc{\invlim}{\displaystyle{\lim_{\longleftarrow}}\,} \nc{\mvp}{\vspace{0.3cm}} \nc{\tk}{^{(k)}} \nc{\tp}{^\prime} \nc{\ttp}{^{\prime\prime}} \nc{\svp}{\vspace{2cm}} \nc{\vp}{\vspace{8cm}} \nc{\proofbegin}{\noindent{\bf Proof: }}
\nc{\proofend}{$\blacksquare$ \vspace{0.3cm}}
\nc{\modg}[1]{\!<\!\!{#1}\!\!>}
\nc{\intg}[1]{F_C(#1)} \nc{\lmodg}{\!<\!\!} \nc{\rmodg}{\!\!>\!} \nc{\cpi}{\widehat{\Pi}}
\nc{\sha}{{\mbox{\cyr X}}}  
\nc{\shap}{{\mbox{\cyrs X}}} 
\nc{\shpr}{\diamond}    
\nc{\shp}{\ast} \nc{\shplus}{\shpr^+}
\nc{\shprc}{\shpr_c}    
\nc{\msh}{\ast} \nc{\zprod}{m_0} \nc{\oprod}{m_1} \nc{\vep}{\varepsilon} \nc{\labs}{\mid\!} \nc{\rabs}{\!\mid}
\nc{\astarrow}{\overset{\raisebox{-3pt}{$\ast$}}{\rightarrow}}
\nc{\Sym}{\mrm{Sym}}
\nc{\Nsym}{\mrm{NSym}}
\nc{\Qsym}{\mrm{QSym}}
\nc{\Ssym}{\mrm{\mathfrak{S}Sym}}
\nc{\SSRCT}{\mrm{SSRCT}}
\nc{\SSRCTs}{\mrm{SSRCTs}}
\nc{\SRCT}{\mrm{SRCT}}
\nc{\SRCTs}{\mrm{SRCTs}}
\nc{\SSRT}{\mrm{SSRT}}
\nc{\SSRTs}{\mrm{SSRTs}}
\nc{\syms}{symmetric functions\xspace}
\nc{\qsyms}{quasisymmetric functions\xspace}
\nc{\nsymg}{\mathrm{NSym}_\gp}
\nc{\parr}{\mrm {Par}}
\nc{\Set}{\mrm {Set}}
\nc{\Comp}{\mrm {Comp}}
\nc{\Des}{\mrm {Des}}
\nc{\dth}{d} \nc{\mmbox}[1]{\mbox{\ #1\ }} \nc{\fp}{\mrm{FP}} \nc{\rchar}{\mrm{char}} \nc{\Fil}{\mrm{Fil}} \nc{\Mor}{Mor\xspace} \nc{\gmzvs}{gMZV\xspace} \nc{\gmzv}{gMZV\xspace} \nc{\mzv}{MZV\xspace} \nc{\mzvs}{MZVs\xspace} \nc{\Hom}{\mrm{Hom}} \nc{\id}{\mrm{id}} \nc{\im}{\mrm{im}} \nc{\incl}{\mrm{incl}}  \nc{\mchar}{\rm char}
\nc{\Alg}{\mathbf{Alg}} \nc{\Bax}{\mathbf{Bax}} \nc{\bff}{\mathbf f} \nc{\bfk}{{\bf k}} \nc{\bfone}{{\bf 1}} \nc{\bfx}{\mathbf x} \nc{\bfy}{\mathbf y}
\nc{\base}[1]{\bfone^{\otimes ({#1}+1)}} 
\nc{\Cat}{\mathbf{Cat}} \delete{}
\nc{\detail}{\marginpar{\bf More detail}
    \noindent{\bf Need more detail!}
    \svp}
\nc{\Int}{\mathbf{Int}} \nc{\Mon}{\mathbf{Mon}}
\nc{\rbtm}{{shuffle }} \nc{\rbto}{{Rota-Baxter }} \nc{\remarks}{\noindent{\bf Remarks: }} \nc{\Rings}{\mathbf{Rings}} \nc{\Sets}{\mathbf{Sets}}
\nc{\balpha}{\mathbf{\alpha}}
\nc{\BA}{{\mathbb A}} \nc{\CC}{{\mathbb C}} \nc{\DD}{{\mathbb D}} \nc{\EE}{{\mathbb E}} \nc{\FF}{{\mathbb F}} \nc{\GG}{{\mathbb G}} \nc{\HH}{{\mathbb H}} \nc{\LL}{{\mathbb L}} \nc{\NN}{{\mathbb N}} \nc{\KK}{{\mathbb K}} \nc{\PP}{{\mathbb P}} \nc{\QQ}{{\mathbb Q}} \nc{\RR}{{\mathbb R}} \nc{\TT}{{\mathbb T}} \nc{\VV}{{\mathbb V}} \nc{\ZZ}{{\mathbb Z}}
\nc{\cala}{{\mathcal A}} \nc{\calc}{{\mathcal C}} \nc{\cald}{{\mathcal D}} \nc{\cale}{{\mathcal E}} \nc{\calf}{{\mathcal F}} \nc{\calg}{{\mathcal G}} \nc{\calh}{{\mathcal H}} \nc{\cali}{{\mathcal I}} \nc{\call}{{\mathcal L}} \nc{\calm}{{\mathcal M}} \nc{\caln}{{\mathcal N}} \nc{\calo}{{\mathcal O}} \nc{\calp}{{\mathcal P}} \nc{\calr}{{\mathcal R}} \nc{\cals}{{\mathcal S}} \nc{\calt}{{\mathcal T}} \nc{\calw}{{\mathcal W}} \nc{\calk}{{\mathcal K}} \nc{\calx}{{\mathcal X}}
\nc{\calz}{{\mathcal Z}}
 \nc{\CA}{\mathcal{A}}
\nc{\fraka}{{\mathfrak a}} \nc{\frakA}{{\mathfrak A}} \nc{\frakb}{{\mathfrak b}} \nc{\frakB}{{\mathfrak B}}
\nc{\frakc}{{\mathfrak c}}  \nc{\frakD}{{\mathfrak D}}
\nc{\frakH}{{\mathfrak H}}
\nc{\frakh}{{\mathfrak h}} \nc{\frakM}{{\mathfrak M}}
\nc{\frakO}{{\mathfrak O}}
\nc{\frakE}{{\mathfrak E}}
\nc{\bfrakM}{\overline{\frakM}} \nc{\frakm}{{\mathfrak m}} \nc{\frakP}{{\mathfrak P}} \nc{\frakN}{{\mathfrak N}} \nc{\frakp}{{\mathfrak p}} \nc{\frakS}{{\mathfrak S}}
\nc{\frakk}{{\mathfrak k}}
\nc{\frakx}{{\mathfrak x}}
\nc{\frakl}{{\mathfrak l}} \nc{\ox}{\bar{\frakx}} \nc{\frakX}{{\mathfrak X}} \nc{\fraky}{{\mathfrak y}} \nc\dop{\delta}
\nc{\Reduce}{{\rm Red}}
\font\cyr=wncyr10 \font\cyrs=wncyr7
\nc{\redt}[1]{\textcolor{red}{#1}}
\nc{\li}[1]{\textcolor{red}{\tt Li:#1}}
\nc{\huoyi}[1]{\textcolor{blue}{\tt sz:#1}}
\begin{document}
\title{Rigidity for the Hopf algebra of quasisymmetric functions}
\author{Wanwan Jia}
\address{School of Mathematics and Statistics, Southwest University, Chongqing 400715, China}
\email{jiawanwan919@163.com}
\author{Zhengpan Wang}
\address{School of Mathematics and Statistics, Southwest University, Chongqing 400715, China}
\email{zpwang@swu.edu.cn}
\author{Houyi Yu\textsuperscript{*}}\thanks{*Corresponding author}
\address{School of Mathematics and Statistics, Southwest University, Chongqing 400715, China}
\email{yuhouyi@swu.edu.cn}

\hyphenpenalty=8000
\date{}

\begin{abstract}
We investigate the rigidity for the Hopf algebra ${\rm QSym}$ of quasisymmetric functions with respect to the  monomial, the fundamental and the quasisymmetric Schur basis, respectively.
By establishing some combinatorial properties of the posets of  compositions arising from the analogous Pieri rules for quasisymmetric functions,
we show that ${\rm QSym}$ is rigid as an algebra with respect to the quasisymmetric Schur basis,
and rigid as a coalgebra with respect to the monomial and the quasisymmetric Schur basis, respectively.
The natural actions of reversal, complement and transpose of the labelling compositions lead to some nontrivial graded (co)algebra automorphisms of ${\rm QSym}$.
We prove that the linear maps induced by the three actions are precisely the only nontrivial
graded algebra automorphisms that take the fundamental basis into itself. Furthermore,
the complement map on the labels gives the unique nontrivial graded coalgebra automorphism preserving the fundamental basis, while the
reversal map on the labels gives the unique nontrivial graded algebra automorphism preserving the monomial basis.
Therefore, ${\rm QSym}$ is rigid as a Hopf algebra with respect to the monomial  and the quasisymmetric Schur basis.
\end{abstract}

\keywords{rigidity, quasisymmetric function, Hopf algebra,  automorphism}

\maketitle

\hyphenpenalty=8000 \setcounter{section}{0}


\allowdisplaybreaks

\section{Introduction}\label{sec:int}

Reutenauer and his school \cite{MR95,PR95} introduced the Hopf algebra $\Ssym$ of permutations, which has a linear basis indexed by permutations in all symmetric groups $\mathfrak{S}_n$.
One of its important quotient Hopf algebras is the Hopf algebra of quasisymmetric functions, ${\rm {\Qsym}}$,
which was first introduced by Gessel
\cite{Ge84} in the 1980s to deal with the combinatorics of Stanley's P-partitions \cite{Sta2},
and since then ${\rm {\Qsym}}$ has been extensively studied in
algebraic combinatorics \cite{ABS2006,BR2008,BMSW2000,GKLLRT1995,Hi2000,Kw2009}.
Much of the combinatorial richness arising from these Hopf algebras stems from their various distinguished bases and the relationships among these bases.
A graded algebra (respectively, bialgebra, Hopf algebra) is said to be rigid with respect to a given graded basis if there are no nontrivial algebra
(respectively, bialgebra, Hopf algebra) automorphisms that
take the given basis into itself as a graded set.
Hazewinkel \cite{Ha07} proved that the algebra $\Ssym$ is rigid as a Hopf algebra with distinguished basis, i.e., there are no
nontrivial automorphisms that preserve the multiplication and comultiplication and take the distinguished basis of all permutations into itself as a graded set.
Based on this result, Hazewinkel conjectured that there are similar results for the Hopf algebra of quasisymmetric functions.

Motivated by the conjecture of Hazewinkel, the major goal of this paper is to understand the rigidity for the Hopf algebra ${\rm {\Qsym}}$.
As is well known, ${\rm {\Qsym}}$ has various bases \cite{BBSS15,BJR09,Ge84,HLMW11,Lu08,LMW13,MR14,TW15}, all indexed by compositions. In this paper, we focus on the monomial basis $M_\alpha$, the fundamental basis $F_\alpha$
and the quasisymmetric Schur basis ${\mathcal{S}}_{\alpha}$.
The notions of complement, reversal and transpose of compositions correspond to
 well-known involutive graded algebra automorphisms of ${\rm {\Qsym}}$ defined in terms of the
fundamental basis as follows \cite{LMW13}:
\begin{align}
\Psi:{\rm {\Qsym}}\rightarrow{\rm {\Qsym}},\qquad \Psi(F_{\alpha})=F_{\alpha^c};\nonumber\\
\rho:{\rm {\Qsym}}\rightarrow{\rm {\Qsym}},\qquad \rho(F_{\alpha})=F_{\alpha^r};\label{iafcom}\\
\omega:{\rm {\Qsym}}\rightarrow{\rm {\Qsym}},\qquad \omega(F_{\alpha})=F_{\alpha^t}.\nonumber
\end{align}
Note that these automorphisms commute with each other, and that $\omega=\Psi\circ\rho=\rho\circ\Psi$.
Malvenuto and Reutenauer proved in \cite[Th\'eor\`eme 4.12 and page 50]{M93} and \cite[Corollary 2.4]{MR95} that $\rho$ and $\omega$ are algebra automorphisms of ${\rm {\Qsym}}$
and so does  $\Psi$, since  $\Psi=\rho\circ\omega$. (As they mentioned Gessel had shown the results in  \cite{Ge90}).
We will show that $\Psi$ is also the only nontrivial comultiplication-preserving map so that it is the unique nontrivial graded
Hopf algebra automorphism of ${\rm {\Qsym}}$ that preserves the fundamental basis, and that $\rho$ is the unique nontrivial graded algebra automorphism of ${\rm {\Qsym}}$ that takes the monomial basis into itself, but it does not preserve the comultiplication.
The set of quasisymmetric Schur functions is a basis of ${\rm {\Qsym}}$ discovered by Haglund,  Luoto,  Mason and Willigenburg \cite{HLMW11}. We will show that ${\rm {\Qsym}}$
is rigid as a Hopf algebra with respect to this kind of basis, that is, there are no
nontrivial automorphisms that are multiplication-preserving or comultiplication-preserving and
preserve the quasisymmetric Schur basis.

More precisely, this paper is structured as follows.
Section \ref{sec:background} briefly gives background on quasisymmetric functions required to state and prove our results.
In order to prove the main results, we review in Section \ref{sec:perposet3} several posets of compositions arising from the analogous Pieri rules for quasisymmetric functions
and investigate some of their combinatorial properties.
Then in Sections \ref{sec:rwrttmb} we show that the map $\rho$ defined by Eq.\eqref{iafcom} is the only nontrivial graded algebra automorphism of
${\rm {\Qsym}}$ that preserves the set of monomial quasisymmetric functions.
But ${\rm {\Qsym}}$ is rigid as a coalgebra with respect to the monomial basis so that it is also rigid as a Hopf algebra.
We focus our attention on the fundamental basis in Section \ref{sec:rwrttfb}, showing that these maps $\Psi,\rho,\omega$  defined by Eq.\eqref{iafcom}
are the only nontrivial graded algebra automorphisms of ${\rm {\Qsym}}$ that preserve the fundamental basis, and that the map $\Psi$ is the unique nontrivial
graded coalgebra automorphism of ${\rm {\Qsym}}$ preserving the fundamental basis. So $\Psi$ is the only nontrivial
graded Hopf algebra automorphism preserving the fundamental basis.
Section \ref{sec:rwrttqsf}  is devoted to showing that ${\rm {\Qsym}}$ is rigid as a graded  Hopf algebra with respect to the quasisymmetric Schur basis.


\section{Background and Preliminaries}\label{sec:background}

In this section, we review  background on quasisymmetric functions that will be useful to us later.
We refer the reader to~\cite{Sta,Sta2}, as well as~\cite{Ge84,HLMW11,LMW13,MR95}, for further details on quasisymmetric functions.

A {\em composition} $\alpha=(\alpha_1,\alpha_2,\cdots,\alpha_k)$ of a positive integer $n$, often denoted by $\alpha\models n$, is a finite ordered list of positive integers whose sum is $n$. The set of all compositions will be denoted by $\mathcal{C}$.
We shall often omit the commas and write a composition $(\alpha_1,\alpha_2,\cdots,\alpha_k)$ as $\alpha_1\alpha_2\cdots\alpha_k$,
when it is used as a subscript and no confusion can arise.
Given a composition $\alpha=(\alpha_1,\alpha_2,\cdots,\alpha_k)$ we call the integers $\alpha_i$ the {\em parts} of $\alpha$
and define its {\em weight} to be $|\alpha|=\alpha_1+\alpha_2+\cdots+\alpha_k$ and its {\em length} to be
$\ell(\alpha)=k$.
If $\alpha_j =\cdots=\alpha_{j+m-1}=a$ we often
abbreviate this sublist to $a^m$.
A {\em partition} is a composition whose parts are weakly decreasing.
The {\em underlying partition} of a composition $\alpha$, denoted by $\widetilde{\alpha}$, is the partition obtained by sorting the parts of $\alpha$ into weakly decreasing order.
For convenience we denote by $\emptyset$ the unique composition (respectively, partition) of weight and length $0$,
called the {\em empty composition} (respectively, {\em empty partition}).

For positive integers $m,n$ with $m\leq n$, we define the {\em interval} $[m,n]=\{m,m+1,\cdots,n\}$. In particular, we write the interval $[1,n]$ as $[n]$
for short, and hence $[n]=\{1,2,\cdots,n\}$. Any composition $\alpha=(\alpha_1,\alpha_2,\cdots,\alpha_k)$ of $n$ naturally corresponds
to a subset $\Set(\alpha)$ of $[n-1]$
where
\begin{align*}
\Set(\alpha)=\{\alpha_1,\alpha_1+\alpha_2,\cdots,\alpha_1+\alpha_2+\cdots+\alpha_{k-1}\}.
\end{align*}
Note that this is an invertible procedure; that is, any subset $S=\{s_1,s_2\cdots,s_k\}\subseteq [n-1]$ with $s_1<s_2<\cdots<s_k$ corresponds to a composition $\Comp(S)\models n$
where
\begin{align*}
{\rm Comp} (S)=(s_1,s_2-s_1,\cdots,s_{k}-s_{k-1},n-s_{k}).
\end{align*}
In particular, the empty set corresponds to the composition $\emptyset$ if $n=0$, and to $(n)$ if $n>0$.

Given a composition $\alpha=(\alpha_1,\alpha_2,\cdots,\alpha_k)$ there exist three
 closely related compositions: its reversal, its complement, and its transpose.
 Firstly, the {\em reversal} of $\alpha$, denoted by $\alpha^r$, is obtained by writing the parts of
$\alpha$ in the reverse order. Secondly, the {\em complement} of $\alpha$, denoted by $\alpha^c$, is given by
$
\alpha^c=\Comp\left(\Set(\alpha)^c\right),
$
that is, $\Set(\alpha^c)=[|\alpha|-1]-\Set(\alpha)$,
and hence $\alpha^c$ is the composition that corresponds to the complement of the set that corresponds to $\alpha$.
Finally, the {\em transpose} (also known as the {\em conjugate}) of $\alpha$, denoted by $\alpha^t$, is defined to be
$\alpha^t=\left(\alpha^r\right)^c$, which is also equal to $\left(\alpha^c\right)^r$.

Given compositions $\alpha,\beta$, we say that $\alpha$ is {\em a refinement} of $\beta$, denoted by $\alpha\preceq\beta$,  if $\beta$ is obtained from
$\alpha$ by summing some consecutive parts of $\alpha$. Thus, $\alpha\preceq \beta$ if and only if $|\alpha|=|\beta|$ and $\Set(\beta)\subseteq\Set(\alpha)$.
The {\em concatenation} of
$\alpha=(\alpha_1,\alpha_2,\cdots,\alpha_k)$
and $\beta=(\beta_1,\beta_2,\cdots,\beta_l)$ is
\begin{align*}
\alpha\cdot \beta=(\alpha_1,\cdots,\alpha_k,\beta_1,\cdots,\beta_l)
\end{align*}
while the {\em near concatenation} is
\begin{align*}
\alpha\odot \beta=(\alpha_1,\cdots,\alpha_k+\beta_1,\cdots,\beta_l).
\end{align*}

A {\em quasisymmetric function} is a bounded degree formal power series $f\in \QQ[[x_1,x_2,\cdots]]$ such that for each
composition $(\alpha_1,\alpha_2,\cdots,\alpha_k)$, all monomials $x_{i_1}^{\alpha_1}x_{i_2}^{\alpha_2}\cdots x_{i_k}^{\alpha_k}$ in $f$ with
indices $i_1<i_2<\cdots<i_k$ have the same coefficient.
Let ${\rm {\Qsym}}$ denote the set of all quasisymmetric functions. Then ${\rm {\Qsym}}$ forms a connected graded Hopf algebra
$${\rm {\Qsym}}=\bigoplus_{n\geq0}{\rm {\Qsym}}_n,$$
where ${\rm {\Qsym}}_n$ is the space of homogeneous quasisymmetric functions of degree $n$.

There are a number of well-known bases for ${\rm {\Qsym}}$, all indexed by compositions. For the following three
considered here, see \cite{Ge84} and \cite{HLMW11}. The {\em monomial basis}
consists of $M_{\emptyset}=1$ and all formal power series
\begin{align*}
M_\alpha=\sum_{i_1<i_2<\cdots<i_k}x_1^{\alpha_1}x_2^{\alpha_2}\cdots x_k^{\alpha_k},
\end{align*}
where  $\alpha=(\alpha_1,\alpha_2,\cdots,\alpha_k)$,
while the {\em fundamental basis} consists of $F_\emptyset=1$ and all formal power series
\begin{align*}
F_\alpha=\sum_{\beta\preceq\alpha}M_{\beta}.
\end{align*}
Hence by the M\"obius inversion formula,
\begin{align}\label{eq:MFbas}
M_\alpha=\sum_{\beta\preceq\alpha}(-1)^{\ell(\beta)-\ell(\alpha)}F_{\beta}.
\end{align}
Moreover, ${\rm {\Qsym}}_n={\rm span}\{M_\alpha|\alpha\models n\}={\rm span}\{F_\alpha|\alpha\models n\}$.

As an algebra ${\rm {\Qsym}}$ is just a subalgebra of the formal power series algebra $\QQ[[x_1,x_2,\cdots]]$ in countable many commuting variables.
So the product of two quasisymmetric functions is the natural multiplication of formal power series.
The coproduct on each of these bases is even more straightforward to describe
\begin{align}\label{qsprocop}
\Delta(M_\alpha)=\sum_{\alpha=\beta\cdot\gamma}M_{\beta}\otimes M_{\gamma}, \qquad
\Delta(F_\alpha)=\sum_{\alpha=\beta\cdot\gamma}F_{\beta}\otimes F_{\gamma}+\sum_{\alpha=\beta\odot\gamma}F_{\beta}\otimes F_{\gamma}.
\end{align}
The counit is given by
\begin{align}\label{counitqs}
\varepsilon(M_\alpha)=
\begin{cases}
1 &if\ \alpha=\emptyset\\
0 &otherwise,
\end{cases}
\qquad
\varepsilon(F_\alpha)=
\begin{cases}
1 &if \ \alpha=\emptyset\\
0 &otherwise.
\end{cases}
\end{align}

The basis of quasisymmetric Schur functions \cite{HLMW11} is introduced for the Hopf algebra ${\rm {\Qsym}}$ in 2011.
It arose from the combinatorics of Macdonald polynomials \cite{HHL2005} and itself initiated a search for
other Schur-like bases of ${\rm {\Qsym}}$ such as row-strict quasisymmetric functions \cite{MR14} and dual
immaculate quasisymmetric functions \cite{BBSS15}.
We next briefly recall the concept of quasisymmetric Schur functions, see \cite{BLW2011,HLMW11,LMW13} for more details.

We depict a composition $\alpha=(\alpha_1,\alpha_2,\cdots,\alpha_{k})\models n$ by using its {\em reverse composition diagram},
also denoted by $\alpha$, which is the left-justified array of $n$ cells with $\alpha_i$ cells in the $i$-th row, where, following the English convention, we
number the rows from top to bottom, and the columns from left to right. The cell in the $i$-th row and $j$-th column is denoted by the pair $(i,j)$.
For example,
\vspace{3mm}
\begin{center}
\begin{tabular}{c}
\begin{ytableau}
\, \\
\,&\,&\,\\
\,&\bullet\\
\end{ytableau}
\end{tabular}
\end{center}
\vspace{3mm}
is the reverse composition diagram of $(1,3,2)$, and the cell filled with a $\bullet$ is the cell $(3,2)$.

We call the reverse composition diagram of $\alpha$ the {\em Young diagram} if $\alpha$  is a partition.
Let $\alpha,\beta$ be two Young diagrams. We say that $\beta$ is contained in $\alpha$, denoted by $\beta\subseteq\alpha$, if $\ell(\beta)\leq\ell(\alpha)$
and $\beta_i\leq\alpha_i$ for $1\leq i\leq \ell(\beta)$. If $\beta\subseteq\alpha$, then the {\em skew partition shape} $\alpha/\beta$ is the array of cells
\begin{align*}
\alpha/\beta=\{(i,j)|(i,j)\in \alpha\ {\rm and}\ (i,j)\not\in \beta\},
\end{align*}
where $\beta$ is positioned in the upper left corner of $\alpha$.  The {\em size} of $\alpha/\beta$ is $|\alpha/\beta|=|\alpha|-|\beta|$.

We must distinguish between the skew partition shape $\alpha/\beta$ and the skew reverse composition shape
$\alpha/\hspace{-1.2mm}/\beta$, which is defined below.
\begin{definition}\label{reversecompposet}
(Definition 4.2.3 in \cite{LMW13}) The {\em reverse composition poset} $\mathcal{L}_{C}$ is the poset consisting of all compositions in which
$\alpha=(\alpha_1,\alpha_2,\cdots,\alpha_k)$ is covered by $\beta$, denoted by $\alpha\prec_{C}\beta$, if either
\begin{enumerate}
\item $\beta=(1,\alpha_1,\cdots,\alpha_{k})$, that is, the composition obtained by prepending $\alpha$ with a new part of size $1$, or

\item $\beta=(\alpha_1,\cdots,\alpha_j+1,\cdots,\alpha_{k})$, provided that $\alpha_i\neq \alpha_j$ for all $i$ with $1\leq i<j$, that is, the composition obtained
by adding $1$ to a part of $\alpha$ as long as that part is the leftmost part of that size.
\end{enumerate}
The partial order on $\mathcal{L}_{C}$ is denoted by $\leq_{C}$. For example, a saturated chain in $\mathcal{L}_{C}$ is $(1)\prec_{C}(1,1)\prec_{C}(2,1)\prec_{C}(1,2,1)$.
\end{definition}

Let $\alpha,\beta$ be two reverse composition diagrams such that $\beta\leq_{C}\alpha$.
Then we define the {\em skew reverse composition shape}
$\alpha/\hspace{-1.2mm}/\beta$ to be the array of cells
\begin{align*}
\alpha/\hspace{-1.2mm}/\beta=\{(i,j)|(i,j)\in \alpha\ {\rm and}\ (i,j)\not\in \beta\},
\end{align*}
where $\beta$ has been drawn in the bottom left corner. The {\em size} of $\alpha/\hspace{-1.2mm}/\beta$ is
$|\alpha/\hspace{-1.2mm}/\beta|=|\alpha|-|\beta|$.

Following the vocabulary regarding skew shapes (either partition or composition), we call $\beta$ the {\em inner shape}
and $\alpha$ the {\em outer shape}. A  skew shape is a {\em horizontal strip} if no two cells lie in the same
column, and a {\em vertical strip} if no two cells lie in the same row.

\begin{example}
The skew shapes $(5,4,2,2)/(4,2,2)$ and $(1,2,4,2)/\hspace{-1.2mm}/(1,3,1)$ are shown below, respectively, where the inner shapes are denoted by cells filled with a $\bullet$.
\vspace{3mm}
\begin{center}
\begin{tabular}{c}
\begin{ytableau}
\bullet&\bullet&\bullet\ &\bullet\ &  \\
\bullet&\bullet&\, &\, \\
\bullet&\bullet  \\
\,     &\,      \\
\end{ytableau}\\
\\
Skew shape $(5,4,2,2)/(4,2,2)$
\end{tabular}
\qquad
\begin{tabular}{c}
\begin{ytableau}
   \\
\bullet&  \\
\bullet&\bullet&\bullet & \\
\bullet& \\
\end{ytableau}\\
\\
Skew shape $(1,2,4,2)/\hspace{-1.2mm}/(1,3,1)$
\end{tabular}
\end{center}
Note that the first one is a horizontal strip, while the second one is a vertical strip.
\end{example}

\begin{definition}\label{defnSSRCT} (Definition 4.2.6 in \cite{LMW13})
Given a skew reverse composition of shape $\alpha/\hspace{-1.2mm}/\beta$, we define a \emph{semistandard reverse composition tableau} (abbreviated to $\SSRCT$) ${\tau}$ of shape $\alpha/\hspace{-1.2mm}/\beta$ to be a filling
\begin{align*}
{\tau}: \alpha/\hspace{-1.2mm}/\beta\rightarrow \mathbb{Z}^{+}
\end{align*}
of the cells $(i,j)$ of $\alpha/\hspace{-1.2mm}/\beta$
such that
\begin{enumerate}
\item the entries in each row are weakly decreasing when read from left to right,
\item the entries in the first column are strictly increasing when read from top to bottom,
\item set ${\tau}(i,j)=\infty$ for all $(i,j)\in \beta$, and if $i<j$, $(j,k+1)\in \alpha/\hspace{-1.2mm}/\beta$
and ${\tau}(i,k)\geq {\tau}(j,k+1)$, then $(i,k+1)\in\alpha/\hspace{-1.2mm}/\beta$ and ${\tau}(i,k+1)>{\tau}(j,k+1)$.
\end{enumerate}
Sometimes we will abuse notation and use $\SSRCT$ to denote the set of all such tableaux.
\end{definition}

Given an $\SSRCT$ ${\tau}$, we define the \emph{content} of ${\tau}$, denoted by cont$({\tau})$, to be the list of nonnegative integers
\begin{align*}
{\rm cont}({\tau})=(c_1,c_2,\cdots,c_{max}),
\end{align*}
where $c_i$ is the number of times $i$ appearing in ${\tau}$, and $max$ is the largest integer appearing in ${\tau}$. Given variables
$x_1,x_2,\cdots$, we define the \emph{monomial} of ${\tau}$ to be
\begin{align*}
x^{{\tau}}={x_1}^{c_1}{x_2}^{c_2}\cdots{x_{max}}^{c_{max}}.
\end{align*}

\begin{example}\label{exam:(S)SRCT}
An SSRCT of the shape $(3,4,2,3)/\hspace{-1.2mm}/(1,2)$ is shown below, where the inner shape is denoted by cells filled with a $\bullet$.
\vspace{3mm}
\begin{center}
\begin{tabular}{c}
$\tau=$\ \begin{ytableau}
$4$&$3$&$1$   \\
$5$&$4$&$4$&$3$ \\
\bullet&$6$  \\
\bullet&\bullet&$7$ \\
\end{ytableau}
\end{tabular}
\qquad
\begin{tabular}{c}
${\rm cont}(\tau)=(1,0,2,3,1,1,1)$\\
\, \ \ \ \ \ \ \  $x^{\tau}={x_1}{x_3}^{2}{x_4}^{3}{x_5}{x_6}{x_7}$
\end{tabular}
\end{center}
\end{example}
\vspace{3mm}

Let $\alpha$ be a composition. Then the {\emph{quasisymmetric Schur functions}} \cite[Definition 5.1]{HLMW11} ${\mathcal{S}}_{\alpha}$ is defined by
${\mathcal{S}}_{\emptyset}=1$ and
$$
{\mathcal{S}}_{\alpha}=\sum_{{\tau}}x^{{\tau}}
$$
where the sum is over all  $\SSRCTs$ ${\tau}$ of shape $\alpha$.
By \cite[Proposition 5.5]{HLMW11}, the set of all quasisymmetric Schur functions forms a basis for ${\rm QSym}$.
Let $\gamma$ be a composition. According to  \cite[Theorem 3.5]{BLW2011}, the coproduct in terms of the quasisymmetric Schur basis is defined by
\begin{align}\label{eq:deltasagammacoprod}
\Delta({\mathcal{S}}_{\gamma})=\sum_{\alpha,\beta} {C}_{\alpha\beta}^{\gamma}{\mathcal{S}}_{\alpha}\otimes {\mathcal{S}}_{\beta}
\end{align}
where the sum is over all compositions $\alpha,\beta$ and ${C}_{\alpha\beta}^{\gamma}$ is the \emph{ noncommutative Littlewood-Richardson coefficients}.
The values of some special noncommutative Littlewood-Richardson coefficients, that will be used later, can be given directly by noncommutative Pieri rules \cite[Corollary 3.8]{BLW2011}.
More precisely,  for any positive integer $n$, we have
\begin{align}\label{eq:nlrcnvsp}
C_{1^n,\beta}^{\gamma}=
\begin{cases}
1 &if\ {\beta\leq_C\gamma\ {such\ that}\ |\gamma/\hspace{-1.2mm}/\beta|=n} \ {{and}\ \gamma/\hspace{-1.2mm}/\beta\ {is\ a\ vertical\ strip}},\\
0 &otherwise.
\end{cases}
\end{align}
In particular, $C_{1,\beta}^{\gamma}=1$ if and only if $\beta\prec_{C}\gamma$.

For convenience of notation, for each $K\in\{M,F,\mathcal{S}\}$ we define a scalar product on ${\rm {\Qsym}}$, i.e., a $\mathbb{Q}$-valued bilinear form $\langle \cdot,\cdot \rangle_K$, by requiring that $\langle K_{\alpha},K_{\beta} \rangle_K=\delta_{\alpha\beta}$ for all compositions $\alpha$, $\beta$, where $\delta_{\alpha\beta}$ is the Kronecker delta.
Then we have
\begin{align}\label{eq:KKKprod}
K_\alpha K_\beta=\sum_{\gamma\in\mathcal{C}}\langle K_{\gamma},K_\alpha K_\beta\rangle_K  K_\gamma.
\end{align}

We remark that a connected graded bialgebra $H$ has a unique antipode $S$, endowing it with a Hopf structure \cite{Ehr,GR2018}.
Thus, a bialgebra endomorphism of ${\rm {\Qsym}}$ is automatically a Hopf endomorphism.
For further references relevant to Hopf algebras, we refer the reader to \cite{Swe1969}.


\section{Partial orders on the set of compositions}\label{sec:perposet3}

In Section \ref{sec:background}, we recalled the reverse composition poset $(\mathcal{L}_{C},\leq_{C})$ in order to define the quasisymmetric Schur functions.
In this section we will review three other partial orders on the set $\mathcal{C}$ of all compositions, and deduce some of their order-theoretic properties,
which will help us to state Pieri-type rules for quasisymmetric functions.

The multiplication of two monomial quasisymmetric functions is described  in terms of the {\em quasi-shuffle
product} $*$ \cite{Ehr,Ho} on the indexing compositions.
Let
$\alpha=(\alpha_1,\alpha_2,\cdots,\alpha_k)$  and $\beta=(\beta_1,\beta_2,\cdots,\beta_l)$
be two compositions. The quasi-shuffle $*$
can be recursively defined by
\begin{equation*}
\alpha *  \beta =(\alpha_1,(\alpha_2, \cdots, \alpha_k) *  \beta)+(\beta_1,\alpha *  (\beta_2, \cdots,\beta_l))
+(\alpha_1+\beta_1,(\alpha_2,\cdots,a_k)*   (\beta_2,\cdots,\beta_l)),
\end{equation*}
with the convention that $\emptyset* \alpha=\alpha* \emptyset=\alpha$.
We sometimes write the quasi-shuffle product as
$$
\alpha*\beta=\sum_{\delta\in\mathcal{C}}\langle\delta,\alpha*\beta\rangle\delta,
$$
where $\langle\delta,\alpha*\beta\rangle$ is the coefficient in  $\alpha*\beta$ of the composition $\delta$.
Consequently, by Eq.\eqref{eq:KKKprod}, we have $\langle M_{\delta},M_{\alpha}M_{\beta}\rangle_{M}=\langle\delta,\alpha*\beta\rangle$
for all compositions $\alpha$, $\beta$, $\delta$, and hence
\begin{align}\label{quasishuprodqsm}
M_{\alpha} M_{\beta}=\sum_{\delta\in\mathcal{C}}\langle\delta,\alpha*\beta\rangle M_{\delta}.
\end{align}
For example,
\begin{align*}
M_{132}M_{2}=&2M_{1322}+M_{1232}+M_{2132}+M_{134}+M_{152}+M_{332}.
\end{align*}

The partial order relating to the Pieri-type rules for monomial quasisymmetric functions was introduced by Bergeron, Bousquet-M\'elou and Dulucq in \cite{BBD95}.
A composition $\alpha$ is said to be covered by a composition $\beta$, denoted by $\alpha\prec_M\beta$,  if $\beta$ is obtained either by adding $1$ to a part of $\alpha$,
or by inserting a part of size $1$ anywhere into $\alpha$. The partial order obtained by transitive closure of this covering relation
is denoted $\leq_M$ and the poset thus obtained is denoted by $(\Gamma,\leq_M)$.
For any compositions $\alpha,\beta$, it follows from Eq.\eqref{quasishuprodqsm} that  the product $M_{\alpha}M_{\beta}$ of two monomial quasisymmetric functions is {\emph{M-positive}}, that is, it is a nonnegative linear combination of monomial quasisymmetric functions.
In particular, we have the following Pieri-type rules for monomial quasisymmetric functions.
\begin{lemma}\label{lem:m1dalmonomialb}
Let $\alpha=(\alpha_1,\alpha_2,\cdots,\alpha_k)$ be a composition. Then
\begin{align}\label{monomialqsymbasm1malp}
M_{1}M_{\alpha}=\sum_{r=0}^{k}M_{(\alpha_1,\cdots,\alpha_r,1,\alpha_{r+1},\cdots,\alpha_k)}
+\sum_{r=1}^{k}M_{(\alpha_1,\cdots,\alpha_{r-1},\alpha_r+1,\alpha_{r+1},\cdots,\alpha_k)}.
\end{align}
In particular, $\langle M_{\beta},M_{1}M_{\alpha}\rangle_M>0$ if and only if $\alpha\prec_M\beta$.
\end{lemma}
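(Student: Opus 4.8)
The plan is to reduce the statement to an explicit evaluation of the quasi-shuffle product $(1)*\alpha$ and then read off the covering relation. By commutativity of $\Qsym$ and of $*$, we have $M_1M_\alpha=M_\alpha M_1$, so by \eqref{quasishuprodqsm} it suffices to prove the combinatorial identity
\[
\alpha*(1)=\sum_{r=0}^{k}(\alpha_1,\cdots,\alpha_r,1,\alpha_{r+1},\cdots,\alpha_k)+\sum_{r=1}^{k}(\alpha_1,\cdots,\alpha_{r-1},\alpha_r+1,\alpha_{r+1},\cdots,\alpha_k)
\]
in the free abelian group on $\mathcal{C}$ (coinciding compositions on the right being counted with multiplicity). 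Once this is established, \eqref{monomialqsymbasm1malp} follows immediately by applying \eqref{quasishuprodqsm}.

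First I would prove the displayed identity by induction on $k=\ell(\alpha)$. The base case $k=0$ is the convention $\emptyset*(1)=(1)$. For the inductive step, apply the recursive definition of $*$ with first argument $\alpha$ and second argument $(1)$; since $(1)$ has length one, two of the three recursive terms collapse and one gets
\[
\alpha*(1)=\big(\alpha_1,\ (\alpha_2,\cdots,\alpha_k)*(1)\big)+(1,\alpha_1,\cdots,\alpha_k)+(\alpha_1+1,\alpha_2,\cdots,\alpha_k).
\]
Applying the inductive hypothesis to $(\alpha_2,\cdots,\alpha_k)*(1)$ and prepending the part $\alpha_1$ turns the first summand into the sum of the compositions obtained from $\alpha$ by inserting a part $1$ immediately after one of the positions $1,\dots,k$, together with those obtained from $\alpha$ by adding $1$ to one of the parts $\alpha_2,\dots,\alpha_k$. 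The term $(1,\alpha_1,\cdots,\alpha_k)$ is the remaining $r=0$ insertion, and $(\alpha_1+1,\alpha_2,\cdots,\alpha_k)$ is the remaining increment of $\alpha_1$; assembling these three contributions yields exactly the right-hand side of the displayed identity. (Alternatively, one can bypass the quasi-shuffle machinery and expand the product of power series directly: in a monomial $x_i\,x_{j_1}^{\alpha_1}\cdots x_{j_k}^{\alpha_k}$ with $j_1<\cdots<j_k$ coming from $M_1M_\alpha$, the index $i$ either equals some $j_r$, contributing the composition with $\alpha_r$ replaced by $\alpha_r+1$, or falls strictly into one of the $k+1$ gaps determined by $j_1<\cdots<j_k$, contributing the composition with a new part $1$ inserted there; collecting like terms gives \eqref{monomialqsymbasm1malp}.)

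For the ``in particular'' assertion, I would observe that every composition appearing on the right-hand side of \eqref{monomialqsymbasm1malp} occurs with a positive integer coefficient and that there is no cancellation (the product is $M$-positive). Hence $\langle M_\beta,M_1M_\alpha\rangle_M>0$ if and only if $\beta$ is one of the listed compositions, i.e.\ if and only if $\beta$ is obtained from $\alpha$ either by adding $1$ to a part or by inserting a part of size $1$; by definition this says precisely $\alpha\prec_M\beta$.

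The step I expect to be most delicate is the index bookkeeping in the inductive step: one must verify that prepending $\alpha_1$ shifts an ``insert after position $s$'' operation on $(\alpha_2,\dots,\alpha_k)$ to ``insert after position $s+1$'' on $\alpha$, and similarly shifts part-increments, so that together with the two extra terms $(1,\alpha_1,\cdots,\alpha_k)$ and $(\alpha_1+1,\alpha_2,\cdots,\alpha_k)$ one recovers all insertion positions $0,1,\dots,k$ and all increment positions $1,\dots,k$ with the correct multiplicities. One must also keep in mind throughout that \eqref{monomialqsymbasm1malp} is an identity with multiplicities, since distinct values of $r$ can produce the same composition (for instance when $\alpha=(1,1)$ all three insertions give $(1,1,1)$).
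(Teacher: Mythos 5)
Your proof is correct and follows exactly the route the paper intends: the paper omits the proof, stating only that it is ``a straightforward argument using Eq.~\eqref{quasishuprodqsm}'', and your induction on $\ell(\alpha)$ via the quasi-shuffle recursion (together with the correct handling of multiplicities and the observation that $M$-positivity rules out cancellation for the ``in particular'' clause) is precisely that argument carried out in detail.
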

The proof is a straightforward argument using Eq.\eqref{quasishuprodqsm} and hence will be omitted.

In order to establish the Pieri-type rules for fundamental quasisymmetric functions, Bj\"orner and Stanley \cite{BS05} introduced
the following partial order on $\mathcal{C}$. We say that a composition
$\beta$ covers $\alpha=(\alpha_1, \cdots , \alpha_k)$, denoted by $\alpha\prec_F\beta$, if $\beta$ can be obtained from $\alpha$
either by adding $1$ to a part, or adding $1$ to a part and then splitting
this part into two parts. More precisely, for some $j$ we have either
$$\beta=(\alpha_1,\cdots, \alpha_{j-1}, \alpha_{j}+1,\alpha_{j+1}, \cdots, \alpha_k)$$
or
$$\beta=(\alpha_1,\cdots, \alpha_{j-1},h, \alpha_{j}+1-h, \alpha_{j+1}, \cdots, \alpha_k)$$
for some $1\leq h\leq \alpha_j$. The partial order obtained by transitive closure of this covering relation
is denoted by $\leq_F$ and the poset thus obtained is denoted by $(\mathcal{F},\leq_F)$. In \cite[Section 3]{BS05}, the authors proved the following Pieri-type rules for fundamental quasisymmetric functions:
\begin{align}\label{fundbasf1falp}
F_1F_{\alpha}=\sum_{\alpha\prec_F \beta}F_{\beta}.
\end{align}

In order to state the analogous Pieri rules for  quasisymmetric Schur functions \cite[Theorem 6.3]{HLMW11},
we need three operators: $\text{rem}$, $\text{row}$, $\text{col}$.
For a given composition $\alpha$, denote
$\text{rem}_s(\alpha)$ the composition obtained by subtracting $1$ from the rightmost part of size $s$ in $\alpha$, and removing
any $0$ that might arise in this process. If there is no such part then we define $\text{rem}_s(\alpha)=\emptyset$.
Given positive integers $s_1<s_2<\cdots<s_j$ and $m_1\leq m_2\leq \cdots\leq m_j$, we define
\begin{align*}
\text{row}_{\{s_1,s_2,\cdots,s_j\}}(\alpha)=\text{rem}_{s_1}(\cdots(\text{rem}_{s_{j-1}}(\text{rem}_{s_j}(\alpha)))\cdots)
\end{align*}
and
\begin{align*}
\text{col}_{\{m_1,m_2,\cdots,m_j\}}(\alpha)=\text{rem}_{m_j}(\cdots(\text{rem}_{m_{2}}(\text{rem}_{m_1}(\alpha)))\cdots).
\end{align*}
For example, if $\alpha=(1,2,3)$, then
\begin{align*}
\text{row}_{\{2,3\}}(\alpha)=\text{rem}_{2}(\text{rem}_{3}(\alpha))=\text{rem}_{2}(1,2,2)=(1,2,1)
\end{align*}
and
\begin{align*}
\text{col}_{\{2,3\}}(\alpha)=\text{rem}_{3}(\text{rem}_{2}(\alpha))=\text{rem}_{3}(1,1,3)=(1,1,2).
\end{align*}

For any horizontal strip $\delta$ we denote by $S(\delta)$ the set of columns its skew diagram occupies, and for any vertical strip $\epsilon$
we denote by $M(\epsilon)$ the multiset of columns its skew diagram occupies, where multiplicities for a column are given by the number of cells in that column,
and column indices are listed in weakly increasing order.

From \cite[Theorem 6.3]{HLMW11}, we have the following analogous Pieri rules for quasisymmetric Schur functions.

\begin{lemma}[Pieri rules for quasisymmetric Schur functions]\label{PieriruqSchurf}
Let $\alpha$ be a composition. Then
\begin{enumerate}
\item\label{eq:itemPieriruleqsfsnsal}
$$
{\mathcal{S}}_{n}{\mathcal{S}}_{\alpha}=\sum_{\beta}{\mathcal{S}}_{\beta}
$$
where the sum is taken over all compositions $\beta$ such that
\begin{enumerate}
\item\label{lem:item1Pieriqsyms} $\delta=\widetilde{\beta}/\widetilde{\alpha}$ is a horizontal strip,
\item\label{lem:item2Pieriqsyms} $|\delta|=n$,
\item\label{lem:item3Pieriqsyms} ${\rm row}_{S(\delta)}(\beta)=\alpha$;
\end{enumerate}
\item\label{eq:itemPieriruleqsfs1nsal}
$$
{\mathcal{S}}_{1^n}{\mathcal{S}}_{\alpha}=\sum_{\beta}{\mathcal{S}}_{\beta}
$$
where the sum is taken over all compositions $\beta$ such that
\begin{enumerate}
\item $\epsilon=\widetilde{\beta}/\widetilde{\alpha}$ is a vertical strip,
\item $|\epsilon|=n$,
\item ${\rm col}_{M(\epsilon)}(\beta)=\alpha$.
\end{enumerate}
\end{enumerate}
\end{lemma}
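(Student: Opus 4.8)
The statement is \cite[Theorem 6.3]{HLMW11}, recalled here for later use, so strictly speaking it requires no new proof; if one wishes to reprove it from the combinatorial definition, the following is the route I would take. First I would unwind Definition~\ref{defnSSRCT} in the two extreme cases. An $\SSRCT$ of shape $(n)$ is just a weakly decreasing word $a_1\geq a_2\geq\cdots\geq a_n$ of positive integers (conditions (2) and (3) being vacuous), so $\mathcal{S}_{(n)}=\sum_{a_1\geq\cdots\geq a_n}x_{a_1}\cdots x_{a_n}=h_n$; likewise an $\SSRCT$ of shape $(1^n)$ is a strictly increasing word, so $\mathcal{S}_{(1^n)}=e_n$. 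Hence both products under consideration are of the form (symmetric function)$\,\cdot\,\mathcal{S}_\alpha$, and since $\mathcal{S}_n,\mathcal{S}_{1^n}$ are homogeneous of degree $n$, every $\mathcal{S}_\beta$ occurring satisfies $|\beta|=|\alpha|+n$, which is condition (b) of each part.

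Next I would extract the partition-level information. Using the refinement $s_\lambda=\sum_{\widetilde{\gamma}=\lambda}\mathcal{S}_\gamma$ of Schur functions by quasisymmetric Schur functions \cite{HLMW11} together with the classical Pieri rules for $h_ns_\mu$ and $e_ns_\mu$, one sees that the only $\mathcal{S}_\beta$ that can appear in $\mathcal{S}_n\mathcal{S}_\alpha$ (resp. $\mathcal{S}_{1^n}\mathcal{S}_\alpha$) are those for which $\widetilde{\beta}/\widetilde{\alpha}$ is a horizontal (resp. vertical) strip of size $n$; this is condition (a) of each part, and it also bounds the set of candidate shapes $\beta$ to a finite, explicitly described list. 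What remains --- and this is the substantive part --- is to show that among the compositions $\beta$ sharing a fixed admissible underlying partition, exactly one actually occurs, namely the one picked out by $\mathrm{row}_{S(\delta)}(\beta)=\alpha$ (resp. $\mathrm{col}_{M(\epsilon)}(\beta)=\alpha$), and that it occurs with coefficient exactly $1$.

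I would prove this last point directly on tableaux, by constructing a weight-preserving bijection between the set of all semistandard reverse composition tableaux whose shape is one of the candidate compositions $\beta$, and the set of pairs $({\tau}_0,{\tau}_1)$ in which ${\tau}_1$ is an $\SSRCT$ of shape $\alpha$ and ${\tau}_0$ is a weakly decreasing (resp. strictly increasing) word of length $n$. The word ${\tau}_0$ should be read as a prescription, column by column from left to right, of where the $n$ ``new'' cells sit, and the operators $\mathrm{rem}_s$, $\mathrm{row}$, $\mathrm{col}$ are precisely the bookkeeping needed to reconstruct $\alpha$ from $\beta$ after those cells are deleted; conversely, inserting the cells dictated by ${\tau}_0$ into ${\tau}_1$ produces a valid $\SSRCT$ of a unique shape $\beta$ obeying (a)--(c). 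The main obstacle is verifying that this insertion/deletion respects condition (3) of Definition~\ref{defnSSRCT} (the ``triple'' condition) in both directions and that the resulting outer shape is forced; this is where the weakly increasing order on the occupied column indices in the horizontal-strip case, versus the full multiset $M(\epsilon)$ in the vertical-strip case, genuinely matters, and it is the combinatorial heart of \cite[Theorem 6.3]{HLMW11}. Since the bijection preserves $x^{\tau}$, comparing coefficients of monomials on the two sides yields the claimed identities with every coefficient equal to $1$.
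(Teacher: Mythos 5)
Your proposal is correct and matches the paper exactly: the paper offers no proof of this lemma, simply importing it as \cite[Theorem~6.3]{HLMW11}, and you correctly identify that a citation is all that is required here. Your supplementary sketch (reading off $\mathcal{S}_{(n)}=h_n$ and $\mathcal{S}_{(1^n)}=e_n$ from Definition~\ref{defnSSRCT}, reducing to the classical Pieri rules at the level of underlying partitions via $s_\lambda=\sum_{\widetilde{\gamma}=\lambda}\mathcal{S}_\gamma$, and then pinning down the unique composition $\beta$ over each admissible $\widetilde{\beta}$ by a weight-preserving insertion/deletion bijection governed by $\mathrm{row}_{S(\delta)}$ and $\mathrm{col}_{M(\epsilon)}$) is a faithful outline of how the result is actually established in \cite{HLMW11}, but it is not needed for the purposes of this paper.
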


\begin{example}\label{lem:s11s11proguctss2s2}
According to Lemma \ref{PieriruqSchurf}, we have
$$
{\mathcal{S}}_{11}{\mathcal{S}}_{11}={\mathcal{S}}_{22}+{\mathcal{S}}_{211}+{\mathcal{S}}_{121}+{\mathcal{S}}_{112}+{\mathcal{S}}_{1111},\quad
{\mathcal{S}}_{2}{\mathcal{S}}_{2}={\mathcal{S}}_{4}+{\mathcal{S}}_{31}+{\mathcal{S}}_{22}+{\mathcal{S}}_{13}.
$$
\end{example}

The Pieri rules for symmetric functions \cite[Theorem 7.15.7]{Sta2} give rise to Young's lattice on partitions in the following way.
Let $\lambda,\mu$ be partitions. Then $\lambda$ covers $\mu$ in Young's lattice if the coefficient of $s_{\lambda}$ in $s_{1}s_{\mu}$ is $1$, where
$s_{\lambda}$ and $s_{\mu}$ are the Schur functions. Analogously,
Lemma \ref{PieriruqSchurf} gives rise to a poset, denoted by $(\mathcal{Q}_{C},\leq_Q)$,  on compositions:
if $\alpha$ and $\beta$ are compositions, then $\beta$ covers $\alpha$ in the poset $\mathcal{Q}_{C}$ given that the coefficient of ${\mathcal{S}}_{\beta}$ in
${\mathcal{S}}_{1}{\mathcal{S}}_{\alpha}$ is $1$. In other words,
we have
\begin{align}\label{qsymschurbass1salp}
{\mathcal{S}}_1{\mathcal{S}}_{\alpha}=\sum_{\alpha\prec_Q \beta}{\mathcal{S}}_{\beta}.
\end{align}

The following lemma is an immediate consequence of Lemma \ref{PieriruqSchurf}.
\begin{lemma}\label{lem:defprecq}
Let $\alpha$ and $\beta$ be compositions. Then $\alpha\prec_Q \beta$ if and only if $\alpha=\text{rem}_s(\beta)$ where $s$ is a part of $\beta$.
\end{lemma}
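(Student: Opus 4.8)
The plan is to strip the statement down to the $n=1$ instance of the Pieri rule for quasisymmetric Schur functions. By \eqref{qsymschurbass1salp} and part~(\ref{eq:itemPieriruleqsfsnsal}) of Lemma~\ref{PieriruqSchurf} with $n=1$, one has $\alpha\prec_Q\beta$ exactly when $\beta$ fulfils the three listed conditions: $\delta:=\widetilde{\beta}/\widetilde{\alpha}$ is a horizontal strip, $|\delta|=1$, and $\mathrm{row}_{S(\delta)}(\beta)=\alpha$. A skew shape of size $1$ is a single cell, hence automatically a horizontal strip, so the content of the lemma reduces to the equivalence: there exists a column index $c$ with $\widetilde{\beta}/\widetilde{\alpha}$ equal to one cell in column $c$ and $\mathrm{rem}_c(\beta)=\alpha$, if and only if $\alpha=\mathrm{rem}_s(\beta)$ for some part $s$ of $\beta$. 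I would then prove the two implications separately.

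For ``$\Leftarrow$'', suppose $\alpha=\mathrm{rem}_s(\beta)$ with $s$ a part of $\beta$. First I would pass to the underlying partitions: set $\lambda=\widetilde{\beta}$ and let $r$ be the \emph{largest} index with $\lambda_r=s$. The multiset of parts of $\mathrm{rem}_s(\beta)$ is that of $\beta$ with one copy of $s$ replaced by $s-1$ (the resulting $0$ being discarded when $s=1$); and replacing $\lambda_r$ by $s-1$ keeps the sequence weakly decreasing, since $\lambda_{r-1}\geq s>s-1$ and $\lambda_{r+1}\leq s-1$ by maximality of $r$. Hence $\widetilde{\alpha}=(\lambda_1,\dots,\lambda_{r-1},\,s-1,\,\lambda_{r+1},\dots)$, so $\widetilde{\alpha}\subseteq\widetilde{\beta}$, the shape $\widetilde{\beta}/\widetilde{\alpha}$ is the single cell in row $r$, column $s$, and $S(\widetilde{\beta}/\widetilde{\alpha})=\{s\}$. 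Since also $\mathrm{row}_{\{s\}}(\beta)=\mathrm{rem}_s(\beta)=\alpha$, all three Pieri conditions hold, giving $\alpha\prec_Q\beta$.

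For ``$\Rightarrow$'', suppose $\alpha\prec_Q\beta$. Then $\delta=\widetilde{\beta}/\widetilde{\alpha}$ is a single cell, lying in some column $c$; the row of $\widetilde{\beta}$ containing it has length exactly $c$, so $c$ is a part of $\widetilde{\beta}$, hence of $\beta$. As $S(\delta)=\{c\}$, the third condition becomes $\mathrm{rem}_c(\beta)=\mathrm{row}_{S(\delta)}(\beta)=\alpha$, so $\alpha=\mathrm{rem}_s(\beta)$ with $s=c$ a part of $\beta$.

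The only non-formal point --- and thus the step I expect to take the most care --- is the passage to partitions in ``$\Leftarrow$'': one must check that reducing the rightmost part of size $s$ of the composition $\beta$ amounts, after re-sorting, to deleting exactly one corner cell (in column $s$) of $\widetilde{\beta}$. Choosing $r$ to be the last index with $\lambda_r=s$ is precisely what makes this work with no re-sorting beyond confirming that the single-entry change preserves weak decrease. Everything else is a direct unwinding of the definitions of $\mathrm{rem}_s$, $\mathrm{row}$, $S(\cdot)$, and of a horizontal strip, together with the $n=1$ specialization of Lemma~\ref{PieriruqSchurf}.
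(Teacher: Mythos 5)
Your proof is correct and is exactly the argument the paper has in mind: the paper simply declares the lemma ``an immediate consequence'' of the $n=1$ case of the Pieri rule (Lemma~\ref{PieriruqSchurf}), and you have carefully unwound that specialization, including the one genuinely non-formal point that removing the rightmost part of size $s$ from $\beta$ corresponds to deleting the single corner cell in column $s$ of $\widetilde{\beta}$ at the last row of that length. No gaps.
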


The partial orders $\leq_{C}$, $\leq_M$, $\leq_F$ and $\leq_Q$ satisfy the following relations.
\begin{lemma}\label{lem:partialorderforpo}
The binary relations $\leq_{C}$, $\leq_M$, $\leq_F$ and $\leq_Q$  satisfy $\leq_{C}\subseteq\leq_M$ and $\leq_Q\subseteq\leq_M\subseteq\leq_F$.
\end{lemma}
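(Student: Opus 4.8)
The claim is a collection of set-theoretic containments between the four covering-closure partial orders, so it suffices to check the containments at the level of covering relations and then invoke transitive closure. I would break the statement into three separate inclusions: (i) $\leq_C\ \subseteq\ \leq_M$; (ii) $\leq_Q\ \subseteq\ \leq_M$; and (iii) $\leq_M\ \subseteq\ \leq_F$. For each, the plan is to show that every cover $\alpha\prec_C\beta$ (resp. $\alpha\prec_Q\beta$, $\alpha\prec_M\beta$) is realized by a saturated chain from $\alpha$ to $\beta$ in the target poset; since $\leq_M$ and $\leq_F$ are defined as transitive closures of their covering relations, this yields the inclusion. Note each cover adds exactly $1$ to the weight, so in (i) and (ii) a single cover in the source corresponds to a single cover in the target, while in (iii) a cover in $\leq_M$ should again be a single cover in $\leq_F$; the containments are therefore expected to be immediate once one unwinds the definitions.

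\textbf{(i) $\leq_C\subseteq\leq_M$.} By Definition~\ref{reversecompposet}, a cover $\alpha\prec_C\beta$ is either prepending a part of size $1$, or adding $1$ to a (leftmost-of-its-size) part. Both operations are instances of the covering relation defining $\prec_M$ (inserting a part of size $1$ anywhere; adding $1$ to a part), so $\alpha\prec_M\beta$ in each case. Hence every $\prec_C$-cover is a $\prec_M$-cover, and taking transitive closures gives $\leq_C\subseteq\leq_M$.

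\textbf{(ii) $\leq_Q\subseteq\leq_M$.} By Lemma~\ref{lem:defprecq}, $\alpha\prec_Q\beta$ means $\alpha=\mathrm{rem}_s(\beta)$ for some part $s$ of $\beta$, i.e. $\beta$ is obtained from $\alpha$ by adding $1$ to a part of $\alpha$ (if that part had size $s-1>0$) or by inserting a new part of size $1$ (if $s=1$ and the subtraction deletes a part). Either way this is exactly a $\prec_M$-cover, so $\alpha\prec_M\beta$ and, after transitive closure, $\leq_Q\subseteq\leq_M$.

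\textbf{(iii) $\leq_M\subseteq\leq_F$.} Here I would show directly that a $\prec_M$-cover is a $\prec_F$-cover. Adding $1$ to a part of $\alpha$ is the first type of $\prec_F$-cover verbatim. For the other case — inserting a part of size $1$ between $\alpha_{j-1}$ and $\alpha_j$, i.e. passing to $(\dots,\alpha_{j-1},1,\alpha_j,\dots)$ — observe that the $\prec_F$-cover that adds $1$ to $\alpha_{j-1}$ and then splits the resulting $\alpha_{j-1}+1$ into $(\alpha_{j-1},1)$ (taking $h=\alpha_{j-1}$ in the second bullet of the definition of $\prec_F$) produces precisely $(\dots,\alpha_{j-1},1,\alpha_j,\dots)$; if $j=1$, one instead adds $1$ to $\alpha_1$ and splits into $(1,\alpha_1)$. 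So each $\prec_M$-cover is a $\prec_F$-cover, and transitive closure yields $\leq_M\subseteq\leq_F$, completing the proof.

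\textbf{Main obstacle.} There is essentially no difficulty: the whole argument is a matter of carefully matching the combinatorial descriptions of the covering relations. The only point requiring a moment's care is the ``insert a part of size $1$'' operation in (iii), since $\prec_F$ has no literal ``insertion'' move — one must see it as ``add $1$ to a neighbouring part and split it off'', and handle the boundary case $j=1$ (inserting at the front) separately. I would write that verification out explicitly; everything else is a direct appeal to the definitions recalled in Section~\ref{sec:perposet3}.
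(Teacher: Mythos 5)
Your proof is correct and follows essentially the same route as the paper: check each containment at the level of covering relations and take transitive closures. The paper treats $\prec_C\subseteq\prec_M\subseteq\prec_F$ as immediate from the definitions and only writes out the $\prec_Q\subseteq\prec_M$ case (via $\mathrm{rem}_s$, exactly as you do), so your extra verification of the ``insert a $1$'' move as an $h$-split in $\prec_F$ is just a more explicit version of the same argument.
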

\begin{proof}
By definition, $\prec_C\subseteq\prec_M\subseteq\prec_F$  clearly holds, so we only need to show that $\prec_Q\subseteq\prec_M$. Let $\alpha=(\alpha_1,\alpha_2,\cdots,\alpha_k),\beta=(\beta_1,\beta_2,\cdots,\beta_t)$.
If $\alpha\prec_Q\beta$, then, by Lemma \ref{lem:defprecq}, $\alpha=\text{rem}_{\beta_j}(\beta)$ for some $j$ with $1\leq j\leq t$.
Without loss of generality, we may assume that $\beta_j$ is the rightmost part of size $\beta_j$ in $\beta$.
Thus,
\begin{align*}
\alpha=\begin{cases}
(\beta_1,\cdots,\beta_j-1,\cdots,\beta_t)& if\ \beta_j\geq2,\\
(\beta_1,\cdots,\beta_{j-1},\beta_{j+1},\cdots,\beta_t)& if\ \beta_j=1,
\end{cases}
\end{align*}
or equivalently  either $\beta=(\alpha_1,\cdots, \alpha_j+1,\cdots,\alpha_k)$ or
$\beta=(\alpha_1,\cdots, \alpha_{j-1},1,\alpha_j,\cdots,\alpha_k)$, whence $\alpha\prec_M\beta$
and hence $\prec_Q\subseteq\prec_M$.
\end{proof}

It is clear that $\mathcal{L}_{C}$, $\mathcal{Q}_{C}$, $\Gamma$ and $\mathcal{F}$ are all graded by $rank(\alpha)=|\alpha|$ and have a unique minimal element $\emptyset$.
Figures $1-4$ show four levels, i.e., ranks $1,2,3,4$, of $(\mathcal{L}_{C},\leq_{C})$, $(\mathcal{Q}_{C},\leq_Q)$,
$(\Gamma,\leq_M)$ and $(\mathcal{F},\leq_F)$, respectively.

\begin{center}
\setlength{\unitlength}{2mm}
\qquad\qquad\qquad \begin{picture}(12,12)
\linethickness{0.5pt}
\put(-9,-4){\scriptsize{Figure $1$: The poset $(\mathcal{L}_{C},\leq_{C})$}}
\put(0,0){\circle*{0.5}}\put(-0.3,-2){\scriptsize{1}}
\put(0,0){\line(3,1){6}}
\put(0,0){\line(-3,1){6}}
\put(-6,2){\circle*{0.5}}\put(-9.5,1.5){\scriptsize{11}}
\put(-6,2){\line(-1,1){3}}
\put(-6,2){\line(3,1){9}}
\put(6,2){\circle*{0.5}}\put(8,1.5){\scriptsize{2}}
\put(6,2){\line(-3,1){9}}
\put(6,2){\line(1,1){3}}
\put(-9,5){\circle*{0.5}}\put(-12.5,4.5){\scriptsize{111}}
\put(-9,5){\line(-1,2){2}}
\put(-9,5){\line(2,1){8}}
\put(-3,5){\circle*{0.5}}\put(-6,4.5){\scriptsize{12}}
\put(-3,5){\line(-5,4){5}}
\put(-3,5){\line(1,1){4}}
\put(-3,5){\line(2,1){8}}
\put(3,5){\circle*{0.5}}\put(5,4.5){\scriptsize{21}}
\put(3,5){\line(-2,1){8}}
\put(3,5){\line(1,2){2}}
\put(3,5){\line(5,4){5}}
\put(9,5){\circle*{0.5}}\put(10.5,4.5){\scriptsize{3}}
\put(9,5){\line(1,2){2}}
\put(9,5){\line(-2,1){8}}
\put(-11,9){\circle*{0.5}}\put(-14,9.5){\scriptsize{1111}}
\put(-8,9){\circle*{0.5}}\put(-9.5,9.5){\scriptsize{112}}
\put(-5,9){\circle*{0.5}}\put(-6.2,9.5){\scriptsize{121}}
\put(-1,9){\circle*{0.5}}\put(-3,9.5){\scriptsize{211}}
\put(1,9){\circle*{0.5}}\put(1,9.5){\scriptsize{13}}
\put(5,9){\circle*{0.5}}\put(4,9.5){\scriptsize{22}}
\put(8,9){\circle*{0.5}}\put(7.5,9.5){\scriptsize{31}}
\put(11,9){\circle*{0.5}}\put(11.5,9.5){\scriptsize{4}}
\end{picture} \qquad\qquad\qquad\qquad\qquad\qquad
\begin{picture}(12,12)
\linethickness{0.5pt}
\put(-9,-4){\scriptsize{Figure $2$: The poset $(\mathcal{Q}_{C},\leq_Q)$}}
\put(0,0){\circle*{0.5}}\put(-0.3,-2){\scriptsize{1}}
\put(0,0){\line(3,1){6}}
\put(0,0){\line(-3,1){6}}
\put(-6,2){\circle*{0.5}}\put(-9.5,1.5){\scriptsize{11}}
\put(-6,2){\line(-1,1){3}}
\put(-6,2){\line(1,1){3}}
\put(-6,2){\line(3,1){9}}
\put(6,2){\circle*{0.5}}\put(8,1.5){\scriptsize{2}}
\put(6,2){\line(-3,1){9}}
\put(6,2){\line(-1,1){3}}
\put(6,2){\line(1,1){3}}
\put(-9,5){\circle*{0.5}}\put(-12.5,4.5){\scriptsize{111}}
\put(-9,5){\line(-1,2){2}}
\put(-9,5){\line(1,4){1}}
\put(-9,5){\line(1,1){4}}
\put(-9,5){\line(2,1){8}}
\put(-3,5){\circle*{0.5}}\put(-6,4.5){\scriptsize{12}}
\put(-3,5){\line(-5,4){5}}
\put(-3,5){\line(-1,2){2}}
\put(-3,5){\line(1,1){4}}
\put(3,5){\circle*{0.5}}\put(5,4.5){\scriptsize{21}}
\put(3,5){\line(-1,1){4}}
\put(3,5){\line(1,2){2}}
\put(3,5){\line(5,4){5}}
\put(9,5){\circle*{0.5}}\put(10.5,4.5){\scriptsize{3}}
\put(9,5){\line(-1,4){1}}
\put(9,5){\line(1,2){2}}
\put(9,5){\line(-2,1){8}}
\put(-11,9){\circle*{0.5}}\put(-14,9.5){\scriptsize{1111}}
\put(-8,9){\circle*{0.5}}\put(-9.5,9.5){\scriptsize{112}}
\put(-5,9){\circle*{0.5}}\put(-6.2,9.5){\scriptsize{121}}
\put(-1,9){\circle*{0.5}}\put(-3,9.5){\scriptsize{211}}
\put(1,9){\circle*{0.5}}\put(1,9.5){\scriptsize{13}}
\put(5,9){\circle*{0.5}}\put(4,9.5){\scriptsize{22}}
\put(8,9){\circle*{0.5}}\put(7.5,9.5){\scriptsize{31}}
\put(11,9){\circle*{0.5}}\put(11.5,9.5){\scriptsize{4}}
\end{picture}
\end{center}
\vspace{8mm}
\begin{center}
\setlength{\unitlength}{2mm}
\qquad\qquad\qquad \begin{picture}(12,12)
\linethickness{0.5pt}
\put(-9,-4){\scriptsize{Figure $3$: The poset $(\Gamma,\leq_M)$}}
\put(0,0){\circle*{0.5}}\put(-0.3,-2){\scriptsize{1}}
\put(0,0){\line(3,1){6}}
\put(0,0){\line(-3,1){6}}
\put(-6,2){\circle*{0.5}}\put(-9.5,1.5){\scriptsize{11}}
\put(-6,2){\line(-1,1){3}}
\put(-6,2){\line(1,1){3}}
\put(-6,2){\line(3,1){9}}
\put(6,2){\circle*{0.5}}\put(8,1.5){\scriptsize{2}}
\put(6,2){\line(-3,1){9}}
\put(6,2){\line(-1,1){3}}
\put(6,2){\line(1,1){3}}
\put(-9,5){\circle*{0.5}}\put(-12.5,4.5){\scriptsize{111}}
\put(-9,5){\line(-1,2){2}}
\put(-9,5){\line(1,4){1}}
\put(-9,5){\line(1,1){4}}
\put(-9,5){\line(2,1){8}}
\put(-3,5){\circle*{0.5}}\put(-6,4.5){\scriptsize{12}}
\put(-3,5){\line(-5,4){5}}
\put(-3,5){\line(-1,2){2}}
\put(-3,5){\line(1,1){4}}
\put(-3,5){\line(2,1){8}}
\put(3,5){\circle*{0.5}}\put(5,4.5){\scriptsize{21}}
\put(3,5){\line(-2,1){8}}
\put(3,5){\line(-1,1){4}}
\put(3,5){\line(1,2){2}}
\put(3,5){\line(5,4){5}}
\put(9,5){\circle*{0.5}}\put(10.5,4.5){\scriptsize{3}}
\put(9,5){\line(-1,4){1}}
\put(9,5){\line(1,2){2}}
\put(9,5){\line(-2,1){8}}
\put(-11,9){\circle*{0.5}}\put(-14,9.5){\scriptsize{1111}}
\put(-8,9){\circle*{0.5}}\put(-9.5,9.5){\scriptsize{112}}
\put(-5,9){\circle*{0.5}}\put(-6.2,9.5){\scriptsize{121}}
\put(-1,9){\circle*{0.5}}\put(-3,9.5){\scriptsize{211}}
\put(1,9){\circle*{0.5}}\put(1,9.5){\scriptsize{13}}
\put(5,9){\circle*{0.5}}\put(4,9.5){\scriptsize{22}}
\put(8,9){\circle*{0.5}}\put(7.5,9.5){\scriptsize{31}}
\put(11,9){\circle*{0.5}}\put(11.5,9.5){\scriptsize{4}}
\end{picture}
\qquad\qquad\qquad\qquad\qquad\qquad
\begin{picture}(12,12)
\linethickness{0.5pt}
\put(-9,-4){\scriptsize{Figure $4$: The poset $(\mathcal{F},\leq_F)$}}
\put(0,0){\circle*{0.5}}\put(-0.3,-2){\scriptsize{1}}
\put(0,0){\line(3,1){6}}
\put(0,0){\line(-3,1){6}}
\put(-6,2){\circle*{0.5}}\put(-9.5,1.5){\scriptsize{11}}
\put(-6,2){\line(-1,1){3}}
\put(-6,2){\line(1,1){3}}
\put(-6,2){\line(3,1){9}}
\put(6,2){\circle*{0.5}}\put(8,1.5){\scriptsize{2}}
\put(6,2){\line(-3,1){9}}
\put(6,2){\line(-1,1){3}}
\put(6,2){\line(1,1){3}}
\put(-9,5){\circle*{0.5}}\put(-12.5,4.5){\scriptsize{111}}
\put(-9,5){\line(-1,2){2}}
\put(-9,5){\line(1,4){1}}
\put(-9,5){\line(1,1){4}}
\put(-9,5){\line(2,1){8}}
\put(-3,5){\circle*{0.5}}\put(-6,4.5){\scriptsize{12}}
\put(-3,5){\line(-5,4){5}}
\put(-3,5){\line(-1,2){2}}
\put(-3,5){\line(1,1){4}}
\put(-3,5){\line(2,1){8}}
\put(3,5){\circle*{0.5}}\put(5,4.5){\scriptsize{21}}
\put(3,5){\line(-2,1){8}}
\put(3,5){\line(-1,1){4}}
\put(3,5){\line(1,2){2}}
\put(3,5){\line(5,4){5}}
\put(9,5){\circle*{0.5}}\put(10.5,4.5){\scriptsize{3}}
\put(9,5){\line(-1,4){1}}
\put(9,5){\line(-1,1){4}}
\put(9,5){\line(1,2){2}}
\put(9,5){\line(-2,1){8}}
\put(-11,9){\circle*{0.5}}\put(-14,9.5){\scriptsize{1111}}
\put(-8,9){\circle*{0.5}}\put(-9.5,9.5){\scriptsize{112}}
\put(-5,9){\circle*{0.5}}\put(-6.2,9.5){\scriptsize{121}}
\put(-1,9){\circle*{0.5}}\put(-3,9.5){\scriptsize{211}}
\put(1,9){\circle*{0.5}}\put(1,9.5){\scriptsize{13}}
\put(5,9){\circle*{0.5}}\put(4,9.5){\scriptsize{22}}
\put(8,9){\circle*{0.5}}\put(7.5,9.5){\scriptsize{31}}
\put(11,9){\circle*{0.5}}\put(11.5,9.5){\scriptsize{4}}
\end{picture}
\end{center}

\vspace{10mm}

Let $\leq$ be a partial order on the set $\mathcal{C}$, and let $\beta\in \mathcal{C}$ be a composition. We denote $D_{\leq}(\beta)$
the set of compositions covered by $\beta$. In symbols,
$$
D_{\leq}(\beta)=\{\alpha\in \mathcal{C}|\alpha\prec \beta\}.
$$

\begin{lemma}\label{lem:weightandlengthofalbe}
Let $\leq$ be one of the partial orders $\leq_Q,\leq_M,\leq_F$, and let $\alpha,\beta$ be compositions with $|\alpha|\geq3$.
If $D_{\leq}(\alpha)=D_{\leq}(\beta)$, then $|\alpha|=|\beta|$ and $\ell(\alpha)=\ell(\beta)$.
\end{lemma}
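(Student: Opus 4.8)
The statement to prove is Lemma \ref{lem:weightandlengthofalbe}: for $\leq$ one of $\leq_Q,\leq_M,\leq_F$ and compositions $\alpha,\beta$ with $|\alpha|\geq 3$, if $D_{\leq}(\alpha)=D_{\leq}(\beta)$ then $|\alpha|=|\beta|$ and $\ell(\alpha)=\ell(\beta)$. The first observation is that all three posets are graded by $\mathrm{rank}(\gamma)=|\gamma|$, so every element of $D_{\leq}(\alpha)$ has rank $|\alpha|-1$, and likewise every element of $D_{\leq}(\beta)$ has rank $|\beta|-1$. Hence as soon as $D_{\leq}(\alpha)=D_{\leq}(\beta)$ is nonempty we immediately get $|\alpha|-1=|\beta|-1$, i.e.\ $|\alpha|=|\beta|=:n$. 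So the only content is: (a) check the common down-set is nonempty (this is where $|\alpha|\geq 3$ should enter — for small rank, or for certain $\beta$ of small length, the covered set might be empty or coincide accidentally across different lengths), and (b) recover $\ell(\alpha)$ from $D_{\leq}(\alpha)$.

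For part (b) I would extract the length from the down-set by looking at the lengths of the covered compositions. In each of the three covering relations, passing from $\gamma$ down to a covered composition either keeps the length the same (when we undo ``add $1$ to a part,'' possibly after merging in the $\leq_F$ case) or decreases it by $1$ (when we undo ``insert a part of size $1$'' in the $\leq_M$ case, or undo a split in the $\leq_F$ case, or remove a part of size $1$ via $\mathrm{rem}_1$ in the $\leq_Q$ case). Concretely: for $\leq_M$, if $\alpha=(\alpha_1,\dots,\alpha_k)$ then $D_{\leq_M}(\alpha)$ contains compositions of length $k$ (obtained by subtracting $1$ from a part $\alpha_i\geq 2$) and compositions of length $k-1$ (obtained by deleting a part equal to $1$); it contains \emph{no} composition of length $>k$. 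So $k=1+\min\{\ell(\delta):\delta\in D_{\leq_M}(\alpha)\}$ when some part is $1$, and $k=\ell(\delta)$ for every $\delta$ otherwise; in all cases $\max\{\ell(\delta):\delta\in D_{\leq_M}(\alpha)\}=\ell(\alpha)$ provided $D_{\leq_M}(\alpha)\neq\emptyset$ — indeed some element of $D_{\leq_M}(\alpha)$ has length exactly $\ell(\alpha)$ unless $\alpha=(1^k)$, in which case all covered elements have length $k-1$ but then $n=k\geq 3$ forces a clean reconstruction too. The cleanest uniform statement is: $\ell(\alpha)=\max_{\delta\in D_{\leq}(\alpha)}\ell(\delta)$ unless $\alpha$ has all parts $1$, and that exceptional case is handled separately using $|\alpha|=n\geq 3$ (so $\alpha=(1^n)$ is forced by its down-set $\{(1^{n-1})\}$, which is not the down-set of any composition of length $<n$ of weight $n$). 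I would carry out this length analysis separately but in parallel for the three orders, since the three covering rules differ slightly.

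For part (a), I need to know $D_{\leq}(\alpha)\neq\emptyset$ whenever $|\alpha|=n\geq 3$. This is clear: for $\leq_M$ and $\leq_F$, any composition of weight $\geq 2$ covers something (subtract $1$ from any part, or if all parts are $1$ delete one of them); for $\leq_Q$, $\mathrm{rem}_s(\alpha)$ is defined and nonempty as long as $\alpha\neq\emptyset$, so again $D_{\leq_Q}(\alpha)\neq\emptyset$. This also shows the hypothesis $D_{\leq}(\alpha)=D_{\leq}(\beta)$ is between two nonempty sets, so the rank argument of part (a) genuinely applies.

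\textbf{Main obstacle.} The rank/weight equality is essentially immediate from gradedness; the real work is the length reconstruction, and the subtle point is the all-ones composition $\alpha=(1^n)$, whose covered set $\{(1^{n-1})\}$ has length $n-1$ rather than $n$, so the naive ``take the max length'' recipe is off by one there. I would handle this by showing $(1^{n-1})$ (with $n\geq 3$, so $n-1\geq 2$) is covered only by compositions of weight $n$ and length $n$ — namely only by $(1^n)$ in $\leq_Q$ and by $(1^n)$ together with (for $\leq_M,\leq_F$) compositions obtained by adding $1$ to a part, all of which still have length $n-1$... wait, those have length $n-1$, so actually one must argue that no composition $\beta$ with $\ell(\beta)\neq n$ has $D_{\leq}(\beta)=\{(1^{n-1})\}$; since any $\beta$ with $D_{\leq}(\beta)=\{(1^{n-1})\}$ must have weight $n$ and must cover $(1^{n-1})$ and nothing else, a short case check (the only composition of weight $n$ covering nothing but $(1^{n-1})$ is $(1^n)$, using $n-1\geq 2$ to rule out, e.g., $(2,1^{n-2})$ which also covers $(1,1^{n-2})=(1^{n-1})$ but additionally covers other things — here $n\geq 3$ is exactly what guarantees $(2,1^{n-2})$ has a second covered element) pins down $\beta=(1^n)=\alpha$. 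Thus $|\alpha|\geq 3$ is used precisely to make this rigidity of $(1^{n-1})$'s down-set work. Once that edge case is dispatched, the general case is the straightforward max-length recipe, and I expect the whole proof to be about a page of elementary case analysis across the three covering relations.
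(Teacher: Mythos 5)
Your proposal is correct and follows essentially the same route as the paper: weight equality from gradedness, the all-ones composition handled separately by noting that only $(1^n)$ has down-set $\{(1^{n-1})\}$ (using $n\geq 3$), and otherwise length recovered from the fact that subtracting $1$ from a part $\geq 2$ produces a covered element of the same length while every covered element has length $\ell(\beta)$ or $\ell(\beta)-1$. The paper phrases this last step as a two-sided inequality via the explicit witness $\mathrm{rem}_{\alpha_j}(\alpha)$ rather than your max-length formulation, but the argument is the same.
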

\begin{proof}
We can see that $|\alpha|=|\beta|$ since $(\mathcal{C},\leq)$ is a graded poset.
If one of $\alpha,\beta$  is $(1^n)$  where $n\geq3$, without loss of generality, suppose that $\alpha=(1^n)$, then $D_{\leq}(\alpha)=\{(1^{n-1})\}$.
Thus, we must have $\beta=(1^n)$ so that $\alpha=\beta$ and hence $\ell(\alpha)=\ell(\beta)$.

In what follows, assume that neither $\alpha$ nor  $\beta$ is equal to $(1^n)$.
Let $\alpha=(\alpha_1,\alpha_2,\cdots,\alpha_k)$.
Then there exists a part of $\alpha$ larger than $1$. Let
$\alpha_j$ be the rightmost part of $\alpha$  such that $\alpha_j\geq2$ and denote by $\delta=(\alpha_1,\cdots,\alpha_j-1,\cdots,\alpha_k)$. Then $\ell(\delta)=\ell(\alpha)$.
Note that $\delta=\text{rem}_{\alpha_j}(\alpha)\prec_Q\alpha$, so that $\delta\prec_M\alpha$ and $\delta\prec_F\alpha$ by Lemma \ref{lem:partialorderforpo},
whence $\delta$ is in the set $D_{\leq}(\alpha)=D_{\leq}(\beta)$ for any partial order $\leq$ in $\{\leq_Q,\leq_M,\leq_F\}$.
By definition, we have either
$\ell(\delta)=\ell(\beta)$ or $\ell(\delta)=\ell(\beta)-1$, and hence $\ell(\alpha)\leq \ell(\beta)$.
By symmetry, we also have $\ell(\beta)\leq \ell(\alpha)$, proving that $\ell(\alpha)= \ell(\beta)$.
\end{proof}

\begin{lemma}\label{lem:D(al)coversamelength}
Let $\leq$ be one of the partial orders $\leq_Q,\leq_M,\leq_F$, and let $\alpha$, $\beta$ be compositions with the same length,
where $\alpha=(\alpha_1,\alpha_2,\cdots,\alpha_k)$.
\begin{enumerate}
\item\label{lem:itemalneqbe+-1} If $\alpha\neq \beta$ and $D_{\leq}(\alpha)\cap D_{\leq}(\beta)$ contains an element of length $\ell(\alpha)$,
then there exist $i,j$ with $1\leq i<j\leq k$ such that either $\beta=(\alpha_1,\cdots, \alpha_i+1,\cdots,\alpha_j-1,\cdots,\alpha_k)$ or $\beta=(\alpha_1,\cdots, \alpha_i-1,\cdots,\alpha_j+1,\cdots,\alpha_k)$;
\item\label{lem:itemdal=dbeimpal=be} If $D_{\leq}(\alpha)\cap D_{\leq}(\beta)$ contains two distinct elements of length $\ell(\alpha)$, then $\alpha=\beta$.
\end{enumerate}
\end{lemma}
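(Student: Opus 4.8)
The plan is to reduce both parts to a single structural observation about the ``length‑preserving'' piece of each of the three covering relations, after which everything becomes bookkeeping on the parts of $\alpha$, $\beta$ and the common lower cover. Precisely, the first thing I would prove is: for each $\leq\in\{\leq_Q,\leq_M,\leq_F\}$, if $\delta\prec\alpha$ with $\ell(\delta)=\ell(\alpha)=k$, then $\delta=(\alpha_1,\dots,\alpha_p-1,\dots,\alpha_k)$ for some $p\in[k]$ with $\alpha_p\geq 2$. This is immediate from the covering relations recalled in Section~\ref{sec:perposet3}: each covering step of $\prec_M$ and of $\prec_F$ either adds $1$ to a part (length unchanged) or inserts a new part $1$, respectively adds $1$ to a part and then splits it (length $+1$); so a length‑preserving step, read from $\delta$ upward to $\alpha$, must add $1$ to a single part of $\delta$. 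And by Lemma~\ref{lem:defprecq}, $\delta\prec_Q\alpha$ means $\delta=\mathrm{rem}_s(\alpha)$ for some part $s$ of $\alpha$, which preserves the length exactly when $s\geq 2$, in which case $\delta$ is again $\alpha$ with $1$ subtracted from a single part.

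For part~(a), suppose $\delta\in D_{\leq}(\alpha)\cap D_{\leq}(\beta)$ with $\ell(\delta)=k=\ell(\alpha)=\ell(\beta)$. Applying the observation to $\alpha$ and to $\beta$ gives $p,q\in[k]$ with $(\alpha_1,\dots,\alpha_p-1,\dots,\alpha_k)=\delta=(\beta_1,\dots,\beta_q-1,\dots,\beta_k)$. If $p=q$, comparing coordinates forces $\alpha=\beta$, which is excluded; hence $p\neq q$, and comparing coordinates now gives $\beta_m=\alpha_m$ for $m\notin\{p,q\}$, together with $\beta_p=\alpha_p-1$ and $\beta_q=\alpha_q+1$. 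Setting $i=\min\{p,q\}$ and $j=\max\{p,q\}$ then yields exactly one of the two displayed forms for $\beta$, according to whether $p<q$ or $q<p$. This proves~(a).

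For part~(b), suppose $D_{\leq}(\alpha)\cap D_{\leq}(\beta)$ contains distinct compositions $\delta\neq\delta'$ of length $k$, and assume $\alpha\neq\beta$ toward a contradiction. By the observation, $\delta$ and $\delta'$ are obtained from $\alpha$ by decrementing a single part, say in positions $p$ and $p'$, and $p\neq p'$ since $\delta\neq\delta'$. Running the coordinate comparison of part~(a) on $\delta$ (the subcase $p=q$ being ruled out by $\alpha\neq\beta$) shows $\beta$ equals $\alpha$ with $1$ subtracted in position $p$ and added in some position $q\neq p$; running it on $\delta'$ shows $\beta$ equals $\alpha$ with $1$ subtracted in position $p'$ and added in some position $q'$. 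But then the nonzero integer vector $\beta-\alpha$ has its (unique) entry $-1$ occurring at $p$ and also at $p'$, forcing $p=p'$, a contradiction; hence $\alpha=\beta$. The only delicate point in the whole argument is the opening observation, and inside it the $\leq_Q$ case, where not every single‑part decrement of $\alpha$ lies in $D_{\leq_Q}(\alpha)$ — only the one at the rightmost occurrence of each part size; but the rest of the proof uses only the \emph{one‑sided} fact that every length‑$k$ element of $D_{\leq}(\alpha)$ has this shape, which does hold uniformly for all three orders, so no further case analysis is needed.
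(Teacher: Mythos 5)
Your proof is correct and follows essentially the same route as the paper's: both arguments rest on the observation that a length-preserving cover $\delta\prec\alpha$ forces $\delta$ to be $\alpha$ with $1$ subtracted from a single part, and then compare coordinates to pin down $\beta-\alpha$. You merely make explicit (and correctly) the verification of this observation for each of the three orders, which the paper compresses into the phrase ``by definition.''
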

\begin{proof}
\eqref{lem:itemalneqbe+-1} Since $\alpha, \beta$ have the same length, it follows from $\alpha\neq\beta$ that $k\geq 2$.
Let $\sigma\in D_{\leq}(\alpha)\cap D_{\leq}(\beta)$
be an element of length $\ell(\alpha)$. By definition, $\sigma$ must be obtained from $\alpha$ and $\beta$ by subtracting $1$
from some part in $\alpha$ and $\beta$, respectively. Thus, there exist distinct $i,j$ with $1\leq i,j\leq k$ such that
\begin{align*}
    \sigma=(\alpha_1,\cdots,\alpha_i-1,\cdots,\alpha_k)=(\beta_1,\cdots,\beta_j-1,\cdots,\beta_k).
\end{align*}
Hence part \eqref{lem:itemalneqbe+-1} follows.

\eqref{lem:itemdal=dbeimpal=be} Let $\sigma,\delta\in D_{\leq}(\alpha)\cap D_{\leq}(\beta)$
be two distinct elements of length $\ell(\alpha)$.
Analogous to the proof of part \eqref{lem:itemalneqbe+-1}, we see that
there exist $i,j$, $s,t$ such that
\begin{align*}
    \sigma=(\alpha_1,\cdots,\alpha_i-1,\cdots,\alpha_k)=(\beta_1,\cdots,\beta_j-1,\cdots,\beta_k)
\end{align*}
and
\begin{align*}
    \delta=(\alpha_1,\cdots,\alpha_s-1,\cdots,\alpha_k)=(\beta_1,\cdots,\beta_t-1,\cdots,\beta_k).
\end{align*}
If $\alpha\neq\beta$, then $i\neq j$ and $s\neq t$. Suppose without loss of generality that $i<j$.  If $s<t$, then
\begin{align*}
    \beta=(\alpha_1,\cdots,\alpha_i-1,\cdots,\alpha_j+1,\cdots,\alpha_k)=(\alpha_1,\cdots,\alpha_s-1,\cdots,\alpha_t+1,\cdots,\alpha_k),
\end{align*}
so that $i=s$, $j=t$ and hence $\sigma=\delta$, a contradiction. If $s>t$, then we can get a contradiction in a similar way. This completes the proof.
\end{proof}

\begin{proposition}\label{lem:al=beccoverset=}
Let $\leq$ be one of the partial orders $\leq_M$ and $\leq_F$, and let $\alpha,\beta$ be compositions with $|\alpha|\geq4$. If $D_{\leq}(\alpha)=D_{\leq}(\beta)$ then $\alpha=\beta$.
\end{proposition}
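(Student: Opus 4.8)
The plan is to use the two case-analysis lemmas just proved to pin down $\alpha$ and $\beta$, with one degenerate configuration needing a separate look. The key observation I would record first is that for $\leq\in\{\leq_M,\leq_F\}$ and any composition $\gamma$, the members of $D_{\leq}(\gamma)$ having length $\ell(\gamma)$ are precisely the compositions obtained from $\gamma$ by subtracting $1$ from one part of $\gamma$ that is at least $2$: adding $1$ to a part is the only length-preserving covering move in either poset (the splitting move defining $\leq_F$ strictly increases the length), and distinct choices of that part give distinct compositions. Hence the number of length-$\ell(\gamma)$ elements of $D_{\leq}(\gamma)$ equals the number of parts of $\gamma$ exceeding $1$.

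Now suppose $D_{\leq}(\alpha)=D_{\leq}(\beta)$. Since $|\alpha|\geq 4\geq 3$, Lemma~\ref{lem:weightandlengthofalbe} gives $|\alpha|=|\beta|$ and $\ell(\alpha)=\ell(\beta)=:k$, and by the observation above $\alpha$ and $\beta$ have the same number $t$ of parts exceeding $1$. If $t\geq 2$, then $D_{\leq}(\alpha)$ contains at least two distinct elements of length $k$, all of which also lie in $D_{\leq}(\beta)$, so $\alpha=\beta$ by Lemma~\ref{lem:D(al)coversamelength}\eqref{lem:itemdal=dbeimpal=be}. If $t=0$, then $\alpha=(1^k)$, so $|\alpha|=k=|\beta|=\ell(\beta)$ forces $\beta=(1^k)=\alpha$. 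Thus only $t=1$ remains: here I would write $\alpha=(1^a,p,1^b)$ and $\beta=(1^c,p',1^d)$ with $p,p'\geq 2$, and comparing weights and lengths yields $a+b=c+d=k-1$ and $p=p'=|\alpha|-k+1$. The unique length-$k$ element of $D_{\leq}(\alpha)$ is $(1^a,p-1,1^b)$ and that of $D_{\leq}(\beta)$ is $(1^c,p-1,1^d)$, so these two compositions coincide.

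If $p\geq 3$, then $p-1\geq 2$ is the unique part exceeding $1$ in $(1^a,p-1,1^b)=(1^c,p-1,1^d)$, and comparing the position of that part gives $a=c$, hence $\alpha=\beta$. The crux is the case $p=2$, where both length-$k$ elements collapse to $(1^k)$ and become uninformative; here I would instead compare the length-$(k-1)$ members of $D_{\leq}(\alpha)=D_{\leq}(\beta)$. A direct inspection of the covering relations shows that for $\gamma=(1^a,2,1^b)$ these are exactly $(1^{a-1},2,1^b)$ (present iff $a\geq 1$) and $(1^a,2,1^{b-1})$ (present iff $b\geq 1$)---true for $\leq_M$ because the length-decreasing covers delete a part equal to $1$, and, as it happens, also for $\leq_F$ because for a shape of this form every ``merge two adjacent parts and subtract $1$'' move reproduces one of these same two compositions. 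Since $|\alpha|\geq 4$ and $p=2$ we have $k=|\alpha|-1\geq 3$, hence $a+b=c+d\geq 2$, so at least one of $a,b$ and at least one of $c,d$ is positive; encoding each relevant composition $(1^s,2,1^t)$ (with $s+t=k-2$) by the integer $s$, the resulting set of $s$-values attached to $\alpha$ is $\{a-1,a\}$ if $a,b\geq 1$, is $\{0\}$ if $a=0$, and is $\{k-2\}$ if $b=0$, and similarly for $\beta$. As $k\geq 3$ the values $0$ and $k-2$ are distinct, so matching these sets forces $a=c$ in every case, whence $\alpha=\beta$. This $p=2$ configuration is the only genuine obstacle, and the hypothesis $|\alpha|\geq 4$ is exactly what resolves it: when $|\alpha|=3$ one has $D_{\leq_M}((1,2))=D_{\leq_M}((2,1))$ with $(1,2)\neq(2,1)$, so the statement already fails one degree lower.
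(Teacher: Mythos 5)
Your proof is correct, and it follows essentially the same route as the paper: reduce to equal weight and length via Lemma~\ref{lem:weightandlengthofalbe}, dispose of the case of two parts exceeding $1$ via Lemma~\ref{lem:D(al)coversamelength}\eqref{lem:itemdal=dbeimpal=be}, and then analyze the remaining $(1^a,p,1^b)$ configuration by hand. The only (cosmetic) difference is in that last configuration: the paper invokes Lemma~\ref{lem:D(al)coversamelength}\eqref{lem:itemalneqbe+-1} to force $p=2$ and then exhibits specific distinguishing elements of the down-sets, whereas you compare the full sets of length-decreasing covers encoded by the position of the part equal to $2$ --- a slightly more systematic packaging of the same comparison.
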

\begin{proof}
It follows from Lemma \ref{lem:weightandlengthofalbe} and $D_{\leq}(\alpha)=D_{\leq}(\beta)$  that $|\alpha|=|\beta|$ and $\ell(\alpha)=\ell(\beta)$.
It is trivial to see that $\alpha=\beta$
if one of $\alpha$ and $\beta$  is $(1^n)$ or $(n)$, where $n=|\alpha|\geq4$.
If there exist two parts in $\alpha$ or $\beta$ are larger than $1$, then $D_{\leq}(\alpha)=D_{\leq}(\beta)$ contains at least two distinct
elements of length $\ell(\alpha)$.
By Lemma \ref{lem:D(al)coversamelength}\eqref{lem:itemdal=dbeimpal=be}, we have $\alpha=\beta$.
Thus, we next
assume that $\alpha=(1^{i},a,1^{n-i-a})$ and $\beta=(1^{j},b,1^{n-j-b})$, where $0\leq i\leq n-a$, $0\leq j\leq n-b$, $2\leq a\leq n-1$, $2\leq b\leq n-1$.

Suppose to the contrary that $\alpha\neq\beta$. Note that $(1^{i},a-1,1^{n-i-a})$ is an element of length $\ell(\alpha)$ contained in $D_{\leq}(\alpha)=D_{\leq}(\beta)$.
By Lemma \ref{lem:D(al)coversamelength}\eqref{lem:itemalneqbe+-1} we have $a=b=2$, whence $i\neq j$.
 Without loss of generality we may assume that $i<j$.
Then $(1^{j-1},2,1^{n-j-2})$ is covered by $\beta$ so that it belongs to $D_{\leq}(\alpha)$, which yields $j=i+1$.
Hence $\alpha=(1^{i},2,1^{n-i-2})$ and $\beta=(1^{i+1},2,1^{n-i-3})$.
Now if $i\geq1$, then $(1^{i-1},2,1^{n-i-2})\in  D_{\leq}(\alpha)\backslash D_{\leq}(\beta)$, a contradiction; if $i=0$, then
$\alpha=(2,1^{n-2})$ and $\beta=(1,2,1^{n-3})$, which together with $n\geq4$ implies that
$(1,2,1^{n-4})\in  D_{\leq}(\beta)\backslash D_{\leq}(\alpha)$, another contradiction, completing the proof.
\end{proof}

The algebraic meaning of Proposition \ref{lem:al=beccoverset=} can be stated as follows.

\begin{corollary}\label{lemma:geq4albefundeq}
Let $\alpha$ and $\beta$ be two compositions with weight $n\geq4$.
\begin{enumerate}
\item\label{lemitem:ammmal=bemon} If
$
\langle M_\alpha,M_1 M_\delta\rangle_M=\langle M_\beta,M_1M_\delta\rangle_M
$
for all compositions $\delta$ with weight $n-1$, then $\alpha=\beta$;
\item\label{lemitem:ammmal=befunb}  If
$
\langle F_\alpha,F_1 F_\delta\rangle_F=\langle F_\beta,F_1 F_\delta\rangle_F
$
for all compositions $\delta$ with weight $n-1$, then $\alpha=\beta$.
\end{enumerate}
\end{corollary}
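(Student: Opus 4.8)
The plan is to reduce both parts of the corollary directly to Proposition \ref{lem:al=beccoverset=}, by interpreting the pairings $\langle M_\alpha,M_1M_\delta\rangle_M$ and $\langle F_\alpha,F_1F_\delta\rangle_F$ as devices that record exactly which compositions $\delta$ are covered by $\alpha$ in $(\Gamma,\leq_M)$ and $(\mathcal{F},\leq_F)$, respectively. Since every composition covered by $\alpha$ has weight $|\alpha|-1=n-1$, letting $\delta$ range over all compositions of weight $n-1$ is precisely enough to detect the whole down-set $D_{\leq}(\alpha)$, with no extraneous contributions.

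For \eqref{lemitem:ammmal=bemon}, I would invoke Lemma \ref{lem:m1dalmonomialb}: for a composition $\delta\models n-1$ one has $\langle M_\alpha,M_1M_\delta\rangle_M>0$ if and only if $\delta\prec_M\alpha$, that is, if and only if $\delta\in D_{\leq_M}(\alpha)$, and likewise with $\beta$ in place of $\alpha$. Although the coefficients $\langle M_\alpha,M_1M_\delta\rangle_M$ need not all equal $1$, the hypothesis that they agree with $\langle M_\beta,M_1M_\delta\rangle_M$ for every $\delta\models n-1$ in particular forces their supports to agree, so $D_{\leq_M}(\alpha)=D_{\leq_M}(\beta)$. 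As $|\alpha|=n\geq4$, Proposition \ref{lem:al=beccoverset=} with $\leq=\leq_M$ then gives $\alpha=\beta$.

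For \eqref{lemitem:ammmal=befunb}, the argument runs the same way but more cleanly: by the Pieri rule \eqref{fundbasf1falp} one has $F_1F_\delta=\sum_{\delta\prec_F\gamma}F_\gamma$, so $\langle F_\alpha,F_1F_\delta\rangle_F$ equals $1$ when $\delta\prec_F\alpha$ and $0$ otherwise, and similarly for $\beta$. Hence the hypothesis of \eqref{lemitem:ammmal=befunb} says verbatim that $\{\delta:\delta\prec_F\alpha\}=\{\delta:\delta\prec_F\beta\}$, i.e. $D_{\leq_F}(\alpha)=D_{\leq_F}(\beta)$, and Proposition \ref{lem:al=beccoverset=} with $\leq=\leq_F$ again yields $\alpha=\beta$.

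I do not anticipate a genuine obstacle at this stage: the combinatorial heart of the matter has already been handled in Proposition \ref{lem:al=beccoverset=} and the lemmas feeding into it (Lemmas \ref{lem:weightandlengthofalbe} and \ref{lem:D(al)coversamelength}). The only points requiring a line of care are the bookkeeping remarks above, namely that all relevant $\delta$ have weight exactly $n-1$ so that ranging over "all $\delta\models n-1$" captures $D_{\leq}(\alpha)$ exactly, and that in the monomial case it is the supports, rather than the individual coefficient values, that one must compare.
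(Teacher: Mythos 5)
Your proposal is correct and follows essentially the same route as the paper: pass from equality of the pairings to equality of their supports, identify those supports with $D_{\leq_M}(\alpha)$ via Lemma \ref{lem:m1dalmonomialb} (respectively $D_{\leq_F}(\alpha)$ via Eq.~\eqref{fundbasf1falp}), and conclude with Proposition \ref{lem:al=beccoverset=}. Your extra remark that in the monomial case one compares supports rather than coefficient values is the same observation the paper makes when it notes that positivity of one pairing is equivalent to positivity of the other.
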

\begin{proof}
It follows directly from $\langle M_\alpha,M_1 M_\delta\rangle_M=\langle M_\beta,M_1M_\delta\rangle_M$ that
$\langle M_\alpha,M_1 M_\delta\rangle_M> 0$ is equivalent to $\langle M_\beta,M_1M_\delta\rangle_M>0$, which together with
Lemma \ref{lem:m1dalmonomialb} yields that $D_{\leq_M}(\alpha)=D_{\leq_M}(\beta)$. Thus, by Proposition \ref{lem:al=beccoverset=}, we obtain that $\alpha=\beta$,
completing the proof of part \eqref{lemitem:ammmal=bemon}.
By Eq.\eqref{fundbasf1falp}, an analogous argument works for part \eqref{lemitem:ammmal=befunb}.
\end{proof}

\begin{proposition}\label{lem:aln=beccoversetleqq}
Let $\alpha$ and  $\beta$ be distinct compositions with weights larger than $2$.
Then  $D_{\leq_Q}(\alpha)=D_{\leq_Q}(\beta)$ if and only if there exist nonnegative integers $i_1,\cdots,i_k,j_1,\cdots,j_k$ with $k\geq0$ such that
\begin{align}\label{eq:albepropQord}
\{\alpha,\beta\}=\{(1^{i_1},2^{j_1},\cdots,1^{i_k},2^{j_k},1,2),(1^{i_1},2^{j_1},\cdots,1^{i_k},2^{j_k+1},1)\}.
\end{align}
\end{proposition}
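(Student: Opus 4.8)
The plan rests on the explicit description of $\prec_Q$ in Lemma \ref{lem:defprecq}: $D_{\leq_Q}(\beta)$ is precisely the set of distinct compositions $\text{rem}_s(\beta)$ as $s$ runs over the part-sizes of $\beta$. Since $\text{rem}_s$ preserves length for $s\geq 2$ and lowers it by one for $s=1$, the set $D_{\leq_Q}(\beta)$ breaks up as $\{\text{rem}_s(\beta)\colon s\geq 2\text{ a part of }\beta\}$, one element per distinct part-size $\geq2$ of $\beta$, together with the single element $\text{rem}_1(\beta)$ of length $\ell(\beta)-1$, present exactly when $1$ is a part of $\beta$. Bookkeeping by length and by the number of ``large'' parts will drive the whole argument.

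For the \emph{if} direction, set $\gamma=(1^{i_1},2^{j_1},\cdots,1^{i_k},2^{j_k})$, so that $\alpha=\gamma\cdot(1,2)$ and $\beta=\gamma\cdot(2,1)$; each uses only the part-sizes $1$ and $2$ and contains both, so each down-set is the $\{\text{rem}_1,\text{rem}_2\}$ of it. Locating the rightmost $1$ and the rightmost $2$ in $\gamma\cdot(1,2)$ and in $\gamma\cdot(2,1)$ shows $\text{rem}_1(\alpha)=\gamma\cdot(2)=\text{rem}_1(\beta)$ and $\text{rem}_2(\alpha)=\gamma\cdot(1,1)=\text{rem}_2(\beta)$; moreover $\alpha\neq\beta$ (they end in $2$ resp.\ $1$) and $|\alpha|=|\gamma|+3>2$. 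This part is routine.

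For the \emph{only if} direction, assume $\alpha\neq\beta$, $|\alpha|=|\beta|>2$ and $D_{\leq_Q}(\alpha)=D_{\leq_Q}(\beta)$. By Lemma \ref{lem:weightandlengthofalbe}, $\ell(\alpha)=\ell(\beta)=:t$, and as its proof records neither composition is $(1^t)$. Matching the unique members of length $t-1$ shows $1$ is a part of $\alpha$ iff it is a part of $\beta$; if it is a part of neither, then $D_{\leq_Q}(\alpha)$ lies entirely in length $t$, so by Lemma \ref{lem:D(al)coversamelength}\eqref{lem:itemdal=dbeimpal=be} and $\alpha\neq\beta$ it is a singleton, giving $\alpha=(a^t)$, $\beta=(b^t)$, then $a=b$, then $\alpha=\beta$, a contradiction. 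Hence $1$ is a part of both; the same appeal to Lemma \ref{lem:D(al)coversamelength}\eqref{lem:itemdal=dbeimpal=be} shows $D_{\leq_Q}(\alpha)$ has at most one element of length $t$, so (as $\alpha\neq(1^t)$) each of $\alpha,\beta$ has exactly one distinct part-size $\geq 2$, say $a$ resp.\ $b$, and $D_{\leq_Q}(\alpha)=\{\text{rem}_1(\alpha),\text{rem}_a(\alpha)\}$ has exactly two elements. Since $\text{rem}_1$ only deletes a part of size $1$, the largest part of $\text{rem}_1(\alpha)$ is $a$ and that of $\text{rem}_1(\beta)$ is $b$, so $\text{rem}_1(\alpha)=\text{rem}_1(\beta)$ forces $a=b$. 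Applying Lemma \ref{lem:D(al)coversamelength}\eqref{lem:itemalneqbe+-1} at the common length-$t$ element $\text{rem}_a(\alpha)$ and swapping $\alpha,\beta$ if necessary, write $\beta=(\alpha_1,\cdots,\alpha_i+1,\cdots,\alpha_j-1,\cdots,\alpha_t)$ with $i<j$; then $\alpha_j=a$ and $\alpha_i\in\{1,a\}$. If $\alpha_i=a$ then $\beta$ has a part $a+1>a=b=$ largest part of $\beta$, impossible; so $\alpha_i=1$ and $\beta_i=2$, making $2$ a part of $\beta$, hence $b=2$ and so $a=2$. At this point $\alpha$ and $\beta$ have all parts in $\{1,2\}$, each has a $2$, $\alpha_i=1$ and $\alpha_j=2$.

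The crux, and the step I expect to be the main obstacle, is to pin $i$ and $j$ down exactly. Reading the equation $\text{rem}_2(\alpha)=\text{rem}_2(\beta)$ through the ``rightmost'' clause of $\text{rem}$ forces first that the rightmost $2$ of $\alpha$ sits at position $j$ (otherwise both sides modify the same later position, so $\alpha=\beta$ off $\{i,j\}$, impossible), whence $\alpha_m=1$ for all $m>j$; and then that the rightmost $2$ of $\beta$ sits at position $i$ (otherwise $\text{rem}_2(\beta)$ still has a $2$ at $i$ while $\text{rem}_2(\alpha)$ has a $1$ there), whence $\alpha_m=1$ for $i<m<j$. Thus $\alpha=P\cdot 1^{j-i}\cdot 2\cdot 1^{t-j}$ and $\beta=P\cdot 2\cdot 1^{t-i}$, where $P=(\alpha_1,\cdots,\alpha_{i-1})$. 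Finally, $\text{rem}_1(\alpha)=\text{rem}_1(\beta)$: the right-hand side is $P\cdot 2\cdot 1^{t-i-1}$, which after $P$ begins with $2$, whereas $\text{rem}_1(\alpha)$ begins after $P$ with $1$ unless $t=j$ and $j=i+1$ both hold, in which case it equals $P\cdot 2$. Hence $j=t$ and $i=t-1$, so $\alpha=P\cdot(1,2)$ and $\beta=P\cdot(2,1)$ with $P$ a (possibly empty) composition all of whose parts lie in $\{1,2\}$. Writing $P=(1^{i_1},2^{j_1},\cdots,1^{i_k},2^{j_k})$ with exponents allowed to be $0$ (and $k=0$ when $P=\emptyset$), and using $(2^{j_k})\cdot(2,1)=(2^{j_k+1},1)$, yields exactly the asserted description of $\{\alpha,\beta\}$.
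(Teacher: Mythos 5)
Your proof is correct and follows essentially the same route as the paper's: both arguments match the two covered sets element by element using Lemma \ref{lem:defprecq}, Lemma \ref{lem:weightandlengthofalbe} and Lemma \ref{lem:D(al)coversamelength} together with the ``rightmost part'' semantics of $\mathrm{rem}_1$ and $\mathrm{rem}_2$. The only difference is organizational: you first reduce to compositions with all parts in $\{1,2\}$ by counting the length-$\ell(\alpha)$ elements of the common down-set, whereas the paper derives the positional structure of $\beta$ first and excludes parts larger than $2$ only at the very end.
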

\begin{proof}
It follows from Eq.\eqref{eq:albepropQord} that $\text{rem}_{1}(\alpha)=\text{rem}_{1}(\beta)=(1^{i_1},2^{j_1},\cdots,1^{i_k},2^{j_k+1})$
and $\text{rem}_{2}(\alpha)=\text{rem}_{2}(\beta)=(1^{i_1},2^{j_1},\cdots,1^{i_k},2^{j_k},1^2)$. Hence $D_{\leq_Q}(\alpha)=D_{\leq_Q}(\beta)$.

Conversely, assume that $\alpha$ and  $\beta$ are distinct compositions with weights larger than $2$ and $D_{\leq_Q}(\alpha)=D_{\leq_Q}(\beta)$.
Then, by Lemma \ref{lem:weightandlengthofalbe}, $|\alpha|=|\beta|$ and $\ell(\alpha)=\ell(\beta)$.
It is easy to see that both $\alpha$ and $\beta$ have parts greater than $1$. So $D_{\leq_Q}(\alpha)\cap D_{\leq_Q}(\beta)$ contains an element of length $\ell(\alpha)$.
Let  $\alpha=(\alpha_1,\alpha_2,\cdots,\alpha_{\ell})$.
Applying Lemma \ref{lem:D(al)coversamelength}\eqref{lem:itemalneqbe+-1} we may suppose without loss of generality that   $\beta=(\alpha_1,\cdots,\alpha_i+1,\cdots,\alpha_j-1,\cdots,\alpha_{\ell})$ for some $i,j$ with $1\leq i<j\leq\ell$.

We claim that $\beta_{i+1}=\cdots=\beta_{\ell}=1$. Otherwise, we may assume that $\beta_t$ is the rightmost part of size grater than $1$ in $\beta$.
Then $t> i$, and hence
$\text{rem}_{\beta_t}(\beta)=(\beta_1,\cdots,\beta_i,\cdots,\beta_t-1,\cdots,\beta_{\ell})$ belongs to $D_{\leq_Q}(\beta)=D_{\leq_Q}(\alpha)$.
From $\beta_t>1$ we conclude that
$\ell(\text{rem}_{\beta_t}(\beta))=\ell(\alpha)$. Thus, $\text{rem}_{\beta_t}(\beta)$ is  obtained from $\alpha$ by subtracting $1$ from some part in $\alpha$, contradicting the fact that
$\beta_i=\alpha_i+1$. So we have $\beta_{i+1}=\cdots=\beta_{\ell}=1$, whence $\beta=(\alpha_1,\cdots,\alpha_i+1,1^{\ell-i})$.

From $\ell>i$ we see that $\text{rem}_{1}(\beta)=(\alpha_1,\cdots,\alpha_i+1,1^{\ell-i-1})$ belongs to $D_{\leq_Q}(\beta)=D_{\leq_Q}(\alpha)$,
so that $\ell(\text{rem}_{1}(\beta))=\ell(\alpha)-1$, and hence $\text{rem}_{1}(\beta)=\text{rem}_{1}(\alpha)$.
Since the $i$-th part of $\text{rem}_{1}(\beta)$ is $\alpha_i+1$, we have $\alpha_i=1$ and $\text{rem}_{1}(\alpha)=(\alpha_1,\cdots,\alpha_{i-1},\alpha_{i+1},\cdots,\alpha_{\ell})$.
Thus, $\ell=i+1$, whence
$\alpha=(\alpha_1,\cdots,\alpha_{\ell-2},1,2)$ and $\beta=(\alpha_1,\cdots,\alpha_{\ell-2},2,1)$.

If there exists a part of $\alpha$ larger than $2$, then we may choose the rightmost part of that size, say $\alpha_p$, where $1\leq p\leq \ell-2$, so
$\text{rem}_{\alpha_p}(\alpha)=(\alpha_1,\cdots,\alpha_p-1,\cdots,\alpha_{\ell-2},1,2)\in D_{\leq_Q}(\alpha)\backslash D_{\leq_Q}(\beta)$,
contradicting  $D_{\leq_Q}(\alpha)=D_{\leq_Q}(\beta)$. Therefore, we have $1\leq \alpha_s=\beta_s\leq2$ for all $s$ with $1\leq s\leq \ell-2$, and the result follows.
\end{proof}

\begin{proposition}\label{lembgdcovers}
Let $\alpha,\beta$ be two distinct compositions of $n$ with $\ell(\alpha)\leq \ell(\beta)$. Then
$D_{\leq_{C}}(\alpha)=D_{\leq_{C}}(\beta)$ if and only if one of the following conditions holds
\begin{enumerate}
\item\label{lem:itembetgamm211} $\alpha=(2)$, $\beta=(1,1)$;
\item\label{lem:itembetgamm312} $\alpha=(3)$, $\beta=(1,2)$;
\item\label{lem:itembetgamacoma+11a1kbleq}
 $\alpha=(a +1,1,b_1,\cdots,b_k)$, $\beta=(1,a,1,b_1,\cdots,b_k)$ where $a\in\{1,2\}$, $k\geq0$ and  $b_1,b_2,\cdots,b_k$ are positive integers satisfying
\begin{align}\label{eq:creleqbjbega}
b_{j}\leq \max\{b_{i}+1|i=0,1,\cdots,j-1\}
\end{align}
for all $1\leq j\leq k$,
with the notation $b_0=1$.
\end{enumerate}
\end{proposition}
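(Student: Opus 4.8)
The plan is to first make the covering structure of $\mathcal{L}_{C}$ fully explicit, then prove the two implications separately. For a composition $\gamma=(\gamma_1,\dots,\gamma_m)$, a composition covered by $\gamma$ arises in exactly one of two ways: by deleting $\gamma_1$ when $\gamma_1=1$ (the unique covered composition of length $\ell(\gamma)-1$), or by subtracting $1$ from a \emph{reducible} position $j$, meaning a position with $\gamma_j\ge2$ and $\gamma_j-1\notin\{\gamma_1,\dots,\gamma_{j-1}\}$ (a covered composition of length $\ell(\gamma)$). In particular position $1$ is reducible iff $\gamma_1\ge2$; hence $D_{\leq_{C}}(\gamma)$ contains a composition of length $\ell(\gamma)$ iff $\gamma$ has a reducible position, and one of length $\ell(\gamma)-1$ iff $\gamma_1=1$. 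Since $\mathcal{L}_{C}$ is graded by weight, $D_{\leq_{C}}(\alpha)=D_{\leq_{C}}(\beta)$ forces $|\alpha|=|\beta|$ at the outset.

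For the ``if'' direction, the first two families are immediate: $D_{\leq_{C}}((2))=D_{\leq_{C}}((1,1))=\{(1)\}$ and $D_{\leq_{C}}((3))=D_{\leq_{C}}((1,2))=\{(2)\}$. For the third, with $\alpha=(a+1,1,b_1,\dots,b_k)$ and $\beta=(1,a,1,b_1,\dots,b_k)$, I would observe that position $1$ of $\alpha$ and position $1$ of $\beta$ both yield the covered composition $(a,1,b_1,\dots,b_k)$, that position $2$ of $\beta$ (value $a\in\{1,2\}$) is never reducible, and that the only remaining candidates for reducible positions are those carrying $b_1,\dots,b_k$; so both down-sets equal $\{(a,1,b_1,\dots,b_k)\}$ as soon as, for every $i$ with $b_i\ge2$, both $b_i-1\in\{1,a+1,b_1,\dots,b_{i-1}\}$ and $b_i-1\in\{1,a,b_1,\dots,b_{i-1}\}$. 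The key point is that these two memberships together are equivalent to \eqref{eq:creleqbjbega}: writing $m_i=\max\{1,b_1,\dots,b_i\}$, condition \eqref{eq:creleqbjbega} says $b_i\le 1+m_{i-1}$, which by a short induction gives $\{1,2,\dots,m_i\}\subseteq\{1,b_1,\dots,b_i\}$ and hence $b_i-1\in\{1,\dots,m_{i-1}\}$ whenever $b_i\ge2$, a subset of both sets above; conversely, if \eqref{eq:creleqbjbega} fails, then at the least offending $i$ the first membership forces $b_i-1=a+1$ and the second forces $b_i-1=a$, which is absurd.

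For the ``only if'' direction, assume $D:=D_{\leq_{C}}(\alpha)=D_{\leq_{C}}(\beta)$, $\alpha\ne\beta$, $\ell(\alpha)\le\ell(\beta)$. If $\ell(\alpha)=\ell(\beta)$, then $\alpha_1=1$ would put the length-$(\ell(\alpha)-1)$ composition $(\alpha_2,\dots)$ in $D$, forcing $\beta_1=1$ and $\alpha=\beta$; so $\alpha_1,\beta_1\ge2$, whence $(\alpha_1-1,\alpha_2,\dots)$ and $(\beta_1-1,\beta_2,\dots)$ both lie in $D$, and being distinct (else $\alpha=\beta$) they are two compositions of the common length $\ell(\alpha)$ in $D_{\leq_{C}}(\alpha)\cap D_{\leq_{C}}(\beta)$, so the argument of Lemma \ref{lem:D(al)coversamelength}\eqref{lem:itemdal=dbeimpal=be} (which uses only that a length-preserving cover subtracts $1$ from one part) gives $\alpha=\beta$, a contradiction. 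Hence $\ell(\alpha)<\ell(\beta)$. Every element of $D$ then has length in $\{\ell(\alpha)-1,\ell(\alpha)\}\cap\{\ell(\beta)-1,\ell(\beta)\}$, so $D\ne\emptyset$ forces $\ell(\beta)=\ell(\alpha)+1$ and all of $D$ to have length $\ell(\alpha)$. Reading this off $\beta$: it has no reducible position, hence $\beta_1=1$ and $D=\{(\beta_2,\dots,\beta_{\ell(\beta)})\}$ is a singleton. Reading it off $\alpha$: it has no length-$(\ell(\alpha)-1)$ cover, so $\alpha_1\ge2$, whence $D=\{(\alpha_1-1,\alpha_2,\dots,\alpha_{\ell(\alpha)})\}$ and $\alpha$ has no reducible position other than $1$. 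Matching the two descriptions gives $\beta=(1,\alpha_1-1,\alpha_2,\dots,\alpha_{\ell(\alpha)})$, together with the two ``no further reducible position'' conditions on $\alpha$ and on $\beta$.

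From here I would finish by case analysis on $\ell(\alpha)$. If $\ell(\alpha)=1$ then $\alpha=(n)$, $\beta=(1,n-1)$, and nonreducibility of $\beta$ at its second position gives $n\le3$, landing in the first or second family. If $\ell(\alpha)\ge2$, nonreducibility of $\beta$ at its second position (value $\alpha_1-1$) forces $\alpha_1\in\{2,3\}$, say $\alpha_1=a+1$; then nonreducibility of $\alpha$ at its second position gives $\alpha_2\ge2\Rightarrow\alpha_2=a+2$, while nonreducibility of $\beta$ at its third position gives $\alpha_2\ge2\Rightarrow\alpha_2\in\{2,a+1\}$, and the incompatibility forces $\alpha_2=1$. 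Writing $b_i=\alpha_{i+2}$ we get $\alpha=(a+1,1,b_1,\dots,b_k)$ and $\beta=(1,a,1,b_1,\dots,b_k)$, and the remaining nonreducibility conditions are exactly the two memberships from the ``if'' part, hence equivalent to \eqref{eq:creleqbjbega}; this is the third family. I expect the main obstacle to be precisely this last equivalence --- recognizing that \eqref{eq:creleqbjbega} is exactly the simultaneous absence of reducible positions in $\alpha$ and in $\beta$ --- together with the bookkeeping that extracts $\alpha_1\in\{2,3\}$ and $\alpha_2=1$ from the two nonreducibility conditions.
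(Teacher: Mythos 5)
Your proof is correct and follows essentially the same route as the paper's: a direct analysis of the cover sets in $\mathcal{L}_{C}$ that pins down $\beta=(1,\alpha_1-1,\alpha_2,\dots,\alpha_{\ell(\alpha)})$, forces $\alpha_1\in\{2,3\}$ and $\alpha_2=1$, and identifies \eqref{eq:creleqbjbega} with the absence of any further covers. Your write-up is a bit more explicit than the paper in two places it leaves implicit: the upfront ``reducible position'' description of covers (which makes the length bookkeeping transparent and lets you dispose of the equal-length case by noting that the argument of Lemma~\ref{lem:D(al)coversamelength}\eqref{lem:itemdal=dbeimpal=be} transfers to $\leq_{C}$), and the induction on $m_i=\max\{1,b_1,\dots,b_i\}$ showing that \eqref{eq:creleqbjbega} forces $b_j-1\in\{1,b_1,\dots,b_{j-1}\}$, a step the paper asserts without proof.
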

\begin{proof}
Let $\alpha$, $\beta$ be two distinct compositions of $n$ such that
$D_{\leq_{C}}(\alpha)=D_{\leq_{C}}(\beta)$.
Since there is only one composition of $1$, we have $n\geq2$. It is routine to see that either  $\alpha=(2)$, $\beta=(1,1)$ or $\alpha=(3)$, $\beta=(1,2)$ if $\ell(\alpha)=1$.
It is clear that in these cases the sets of  covered compositions are the same. Thus we are in case \eqref{lem:itembetgamm211} or \eqref{lem:itembetgamm312}.
We next assume that $\alpha=(\alpha_1,\alpha_2,\cdots,\alpha_{\ell(\alpha)})$ with $\ell(\alpha)\geq2$.

If $\alpha_1=1$, then the composition $\mu=(\alpha_2,\cdots,\alpha_{\ell(\alpha)})$ is covered by $\alpha$
so that $\mu\prec_C\beta$.
But now it follows from $\alpha\neq\beta$ that $\beta=(\alpha_2,\cdots,\alpha_s+1,\cdots,\alpha_{\ell(\alpha)})$
for some $s$ with $2\leq s\leq \ell(\alpha)$, so that $\ell(\beta)=\ell(\alpha)-1$, contradicting
$\ell(\alpha)\leq\ell(\beta)$. Hence $\alpha_1\geq2$.

Let $\nu=(\alpha_1-1,\alpha_2,\cdots,\alpha_{\ell(\alpha)})$. Then $\nu\prec_C\alpha$, whence $\nu\prec_C\beta$.
So $\beta$ is either $(1,\alpha_1-1,\alpha_2,\cdots,\alpha_{\ell(\alpha)})$ or $(\alpha_1-1,\alpha_2,\cdots,\alpha_t+1,\cdots,\alpha_{\ell(\alpha)})$ for some
$t\geq2$ and
$\alpha_t\not\in\{\alpha_1-1,\alpha_2,\cdots,\alpha_{t-1}\}$.
We next show that the second case will never happen. Otherwise, suppose that $\beta=(\alpha_1-1,\alpha_2,\cdots,\alpha_t+1,\cdots,\alpha_{\ell(\alpha)})$.
If $\alpha_1=2$, then the first part of $\beta$ is $1$, so we have
$$
(\alpha_2,\cdots,\alpha_t+1,\cdots,\alpha_{\ell(\alpha)})\in D_{\leq_{C}}(\beta) \backslash D_{\leq_{C}}(\alpha),
$$
a contradiction. If $\alpha_1\geq3$, then we have
$$
(\alpha_1-2,\alpha_2,\cdots,\alpha_t+1,\cdots,\alpha_{\ell(\alpha)}) \in D_{\leq_{C}}(\beta) \backslash D_{\leq_{C}}(\alpha),
$$
another contradiction.
Thus, we must have $\beta=(1,\alpha_1-1,\alpha_2,\cdots,\alpha_{\ell(\alpha)})$.

We now show that $\alpha_2=1$. Otherwise, $\alpha_2\geq2$.
Now if $\alpha_2-1=\alpha_1$, then
$$
(1,\alpha_1-1,\alpha_2-1,\alpha_3,\cdots,\alpha_{\ell(\alpha)})\in D_{\leq_{C}}(\beta) \backslash D_{\leq_{C}}(\alpha);
$$
if $\alpha_2-1\neq\alpha_1$, then
$$(\alpha_1,\alpha_2-1,\alpha_3,\cdots,\alpha_{\ell(\alpha)})\in D_{\leq_{C}}(\alpha) \backslash D_{\leq_{C}}(\beta),
$$
contradicting $ D_{\leq_{C}}(\alpha)=D_{\leq_{C}}(\beta)$.
 Hence $\alpha_2=1$.

Take $k=\ell(\alpha)-2$ and let $a=\alpha_1-1$, $b_i=\alpha_{i+2}$ for $1\leq i\leq k$.
Then  $k\geq0$ and
$$\alpha=(a +1,1,b_1,\cdots,b_{k}),\quad \beta=(1,a,1,b_1,\cdots,b_{k}).$$
 If $a\geq3$, then $a-1\neq1$ and hence
 $$
(1,a-1,1,b_1,\cdots,b_{k})\in D_{\leq_{C}}(\beta) \backslash D_{\leq_{C}}(\alpha),
$$
contradicting $D_{\leq_{C}}(\alpha) =D_{\leq_{C}}(\beta)$.
We thus obtain that  $a\in \{1,2\}$.

Assume that there exists $j$ with $1\leq j\leq k$ such that $b_{j}> \max\{b_{i}+1|i=0,1,\cdots,j-1\}$.
Then $b_{j}-1> \max\{b_{i}|i=0,1,\cdots,j-1\}$.  If $b_{j}-1\neq a+1$, then we have
$$
(a+1,1,b_1,\cdots,b_{j}-1,\cdots,b_k)\in D_{\leq_{C}}(\alpha) \backslash D_{\leq_{C}}(\beta),
$$
a contradiction.
If $b_{j}-1= a+1$, then $b_{j}-1\neq a$, so
$$
(1,a,1,b_1,\cdots,b_{j}-1,\cdots,b_k)\in D_{\leq_{C}}(\beta) \backslash D_{\leq_{C}}(\alpha),
$$
another contradiction,
proving Eq.\eqref{eq:creleqbjbega}, and the proof of part \eqref{lem:itembetgamacoma+11a1kbleq} follows.

Conversely, let $\alpha,\beta$ be the compositions given by part
\eqref{lem:itembetgamacoma+11a1kbleq}, and let $\gamma=(a,1,b_1,\cdots,b_k)$.
We shall show that $D_{\leq_{C}}(\alpha)=D_{\leq_{C}}(\beta) =\{\gamma\}$.
Obviously, $\gamma$ belongs to $D_{\leq_{C}}(\alpha)$ and $D_{\leq_{C}}(\beta)$.

Take an element $\sigma\in D_{\leq_{C}}(\alpha)$. Since $a\in\{1,2\}$, the first part of $\alpha$, i.e.,  $a+1$, is larger than $1$. Then, by Definition \ref{reversecompposet}, $\alpha$ can be only obtained from $\sigma$  by adding $1$ to the first part of $\sigma$ of a given size.
If $\sigma\neq\gamma$, then there exists $j$ with $1\leq j\leq k$ such that
$\sigma=(a+1,1,b_1,\cdots,b_j-1,\cdots,b_k)$. It follows from Eq.\eqref{eq:creleqbjbega} that $b_j-1\in\{1,b_1,\cdots,b_{j-1}\}$,
contradicting Definition \ref{reversecompposet}. Hence $\sigma=\gamma$, so that $D_{\leq_{C}}(\alpha) =\{\gamma\}$.

The proof of $D_{\leq_{C}}(\beta) =\{\gamma\}$ is analogous to that of $D_{\leq_{C}}(\alpha) =\{\gamma\}$.
Take any element $\delta\in D_{\leq_{C}}(\beta)$.
If $\delta\neq\gamma$, then $\beta$ can be obtained from $\delta$
by adding $1$ to the first part of $\delta$ of a given size.
So we deduce from $a\in\{1,2\}$ that  there exists $j$ with $1\leq j\leq k$ such that
$\delta=(1,a,1,b_1,\cdots,b_j-1,\cdots,b_k)$. However, Eq.\eqref{eq:creleqbjbega}  guarantees  that $b_j-1\in\{1,b_1,\cdots,b_{j-1}\}$, a contradiction.
Thus, $\delta=\gamma$, and hence $D_{\leq_{C}}(\beta) =\{\gamma\}$,
proving
$D_{\leq_{C}}(\alpha)=D_{\leq_{C}}(\beta)$.
\end{proof}


\section{Rigidity with respect to the monomial quasisymmetric basis}\label{sec:rwrttmb}

In this section, we show that the involutive map $\rho$ defined by Eq.\eqref{iafcom} is the unique graded algebra automorphism preserving the monomial basis.
But there is no such a coalgebra automorphism, so ${\rm {\Qsym}}$ is rigid as a Hopf algebra with respect to the monomial basis.
Before we do this, we will work towards two
lemmas.

\begin{lemma}\label{lemma:identitymapgrdaed}
Let $K\in\{M,F,\mathcal{S}\}$, and let $\varphi$  be a graded bijection of ${\rm {\Qsym}}$ that preserves the basis $\{K_{\alpha}|\alpha\in\mathcal{C}\}$.
Then $\varphi(K_{\emptyset})=K_{\emptyset}$, $\varphi(K_{1})=K_{1}$, and there are two possibilities for degree $2$, i.e.,
either $\varphi(K_{11})=K_{11}$, $\varphi(K_{2})=K_{2}$ or
 $\varphi(K_{11})=K_{2}$, $\varphi(K_{2})=K_{11}$.
\end{lemma}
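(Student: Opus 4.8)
The plan is to observe that the hypotheses force $\varphi$ to act, in each fixed degree, as a permutation of the finite set of basis elements indexed by the compositions of that degree, and then simply to count compositions of weight $0$, $1$ and $2$.

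First I would unwind the definitions. Since $\varphi$ is graded it satisfies $\varphi(\mathrm{QSym}_n)=\mathrm{QSym}_n$ for every $n\geq 0$, and since $\varphi$ takes the basis $\{K_\alpha\mid\alpha\in\mathcal{C}\}$ into itself as a graded set, its restriction to $\mathrm{QSym}_n$ is a bijection of the finite set $\{K_\alpha\mid\alpha\models n\}$ onto itself. Thus everything reduces to describing, for small $n$, the permutations of $\{K_\alpha\mid\alpha\models n\}$, and in particular to counting the compositions of $n$.

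Next I would dispatch degrees $0$ and $1$. The only composition of $0$ is the empty composition $\emptyset$, so $\{K_\alpha\mid\alpha\models 0\}=\{K_\emptyset\}$ is a singleton, forcing $\varphi(K_\emptyset)=K_\emptyset$; likewise the only composition of $1$ is $(1)$, so $\{K_\alpha\mid\alpha\models 1\}=\{K_1\}$ is a singleton and $\varphi(K_1)=K_1$. For degree $2$ the compositions of $2$ are precisely $(1,1)$ and $(2)$, hence $\{K_\alpha\mid\alpha\models 2\}=\{K_{11},K_2\}$ has exactly two elements; a bijection of a two-element set is either the identity or the transposition, which gives exactly the two stated alternatives, namely $\varphi(K_{11})=K_{11},\ \varphi(K_2)=K_2$ or $\varphi(K_{11})=K_2,\ \varphi(K_2)=K_{11}$.

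I do not expect a genuine obstacle here: the content is entirely the remark that a graded bijection preserving the distinguished basis must permute each graded layer of that basis, after which the statement is pure bookkeeping about the compositions of $0$, $1$ and $2$. The lemma is really just pinning down the base case for the inductive rigidity arguments that follow, where the algebra or coalgebra structure will be invoked to rule out the degree-$2$ swap (or to propagate it).
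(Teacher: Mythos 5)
Your argument is correct and is exactly the reasoning the paper has in mind (its proof is simply ``This is trivial''): a graded basis-preserving bijection permutes each graded layer of the basis, and counting compositions of $0$, $1$ and $2$ gives the stated conclusions.
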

\begin{proof}
This is trivial.
\end{proof}

\begin{lemma}\label{lemma:alghomalbevaralbe}
Let $K\in\{M,F\}$, and let $\varphi$ and $\phi$ be two graded algebra automorphisms of ${\rm {\Qsym}}$ that preserve the basis $\{K_{\alpha}|\alpha\in\mathcal{C}\}$ where $\phi^2=\phi$.
If $\varphi(K_{\alpha})=\phi(K_{\alpha})$  for all compositions $\alpha$ with $|\alpha|\leq3$, then $\varphi=\phi$.
\end{lemma}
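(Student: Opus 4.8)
The plan is to use induction on $|\alpha|$, with the base cases $|\alpha|\le 3$ handed to us by hypothesis. So suppose $n=|\alpha|\ge 4$ and that $\varphi(K_\delta)=\phi(K_\delta)$ for every composition $\delta$ with $|\delta|<n$; we want $\varphi(K_\alpha)=\phi(K_\alpha)$. Since $\varphi$ is a graded algebra map preserving the basis, $\varphi(K_\alpha)=K_{\varphi'(\alpha)}$ for a permutation $\varphi'$ of the compositions of $n$, and similarly $\phi(K_\alpha)=K_{\phi'(\alpha)}$; our goal is $\varphi'(\alpha)=\phi'(\alpha)$. First I would apply both $\varphi$ and $\phi$ to the Pieri-type product $K_1K_{\delta}$ for $K\in\{M,F\}$: using Lemma \ref{lem:m1dalmonomialb} (resp.\ Eq.\eqref{fundbasf1falp}) and the fact that $\varphi(K_1)=K_1=\phi(K_1)$ by Lemma \ref{lemma:identitymapgrdaed}, together with $\varphi(K_\delta)=\phi(K_\delta)$ from the inductive hypothesis, we get that $\varphi$ and $\phi$ send the set $\{K_\beta : |\beta|=n,\ \langle K_\beta, K_1 K_\delta\rangle_K>0\}$ to the same set, for every composition $\delta$ of $n-1$. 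In poset language, for each $\delta\models n-1$ the images $\varphi'$ and $\phi'$ carry $\{\beta : \delta\prec\beta\}$ (the up-set of $\delta$) to one and the same subset of $\mathcal C_n$.

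Next I would turn this around in terms of down-sets. Fix $\alpha\models n$. The composition $\beta:=\phi'(\alpha)$ has the property that for every $\delta\in D_{\le}(\beta)$ we have $\delta\prec\beta$, hence $\beta$ lies in the $\varphi$-image of the up-set of $\delta$; unwinding, one checks $D_{\le}(\varphi'^{-1}(\beta))=D_{\le}(\alpha)$. Actually the cleanest route: from the previous paragraph, the equality of images of up-sets forces $D_{\le}\bigl(\varphi'(\alpha)\bigr)$ and $D_{\le}\bigl(\phi'(\alpha)\bigr)$ to coincide as subsets of $\mathcal C_{n-1}$ — because $\gamma\in D_{\le}(\varphi'(\alpha))$ iff $\varphi'(\alpha)$ lies in the up-set of $\gamma$ iff (applying $\varphi^{-1}$, and using that $\varphi^{-1}$ too agrees with $\phi^{-1}=\phi$ below degree $n$ by the inductive hypothesis, which gives $\varphi^{-1}(K_1)=K_1$ and agreement of the Pieri sets) $\alpha$ lies in a fixed up-set, symmetrically the same for $\phi'$. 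Then Proposition \ref{lem:al=beccoverset=} applies directly: since $|\varphi'(\alpha)|=|\phi'(\alpha)|=n\ge 4$ and $D_{\le}(\varphi'(\alpha))=D_{\le}(\phi'(\alpha))$ for $\le\in\{\le_M,\le_F\}$, we conclude $\varphi'(\alpha)=\phi'(\alpha)$, i.e.\ $\varphi(K_\alpha)=\phi(K_\alpha)$, completing the induction.

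The step I expect to be the main obstacle is making precise and correct the bookkeeping that lets me "apply $\varphi^{-1}$" — that is, verifying that $\varphi^{-1}$ (equivalently $\varphi'^{-1}$) also agrees with $\phi$ in all degrees below $n$ and is a graded algebra automorphism preserving the basis, so that the Pieri argument can be run symmetrically for $\varphi$ and for $\varphi^{-1}$. This is where one must be careful not to circularly invoke the degree-$n$ conclusion: the inductive hypothesis is about $\varphi$ in degrees $<n$, and since $\varphi'$ restricted to $\bigcup_{m<n}\mathcal C_m$ is then a bijection equal to $\phi'$, its inverse agrees with $\phi'^{-1}=\phi'$ there as well, which is exactly what is needed. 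Once that is pinned down, the argument is a formal consequence of Lemma \ref{lemma:identitymapgrdaed}, the Pieri rules, and Proposition \ref{lem:al=beccoverset=}; I would also remark that the hypothesis $\phi^2=\phi$ is only used to know $\phi$ restricts to the identity-or-swap pattern on low degrees via Lemma \ref{lemma:identitymapgrdaed} and to anchor the base cases, so no further idempotency input is needed in the inductive step.
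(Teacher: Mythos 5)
Your proposal is correct and follows essentially the same route as the paper: induction on the weight, the Pieri products $K_1K_\delta$, and the fact (Proposition~\ref{lem:al=beccoverset=}, packaged as Corollary~\ref{lemma:geq4albefundeq}) that a composition of weight at least $4$ is determined by its down-set in $(\mathcal{C},\leq_M)$ or $(\mathcal{C},\leq_F)$. The paper sidesteps your $\varphi^{-1}$ bookkeeping by composing with $\phi$ and using that a basis-preserving map preserves $\langle\cdot,\cdot\rangle_K$, reducing the inductive step to the single identity $\langle\phi\varphi(K_\alpha),K_1K_\delta\rangle_K=\langle K_\alpha,K_1K_\delta\rangle_K$ for all $\delta$ of weight $|\alpha|-1$.
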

\begin{proof}
Take an arbitrary composition $\alpha$. We shall show  by induction on $|\alpha|$ that $\varphi(K_{\alpha})=\phi(K_{\alpha})$.
This is obvious if $|\alpha|\leq3$. Supposing by induction
that $\varphi(K_{\sigma})=\phi(K_{\sigma})$ for all compositions $\sigma$ with $|\sigma|=|\alpha|-1$, where $|\alpha|\geq4$.
Since $\varphi$ is an algebra automorphism of ${\rm {\Qsym}}$, we have
\begin{align*}
\langle \varphi( K_{\alpha}),\phi(K_{1})\phi(K_{\delta})\rangle_K
=\langle \varphi (K_{\alpha}),\varphi (K_{1})\varphi (K_{\delta})\rangle_K
=\langle K_{\alpha},K_{1}K_{\delta}\rangle_K
\end{align*}
for all $\delta$ with $|\delta|=|\alpha|-1$.
Note that $\phi^2=\phi$ and that $\phi$ is an algebra automorphism, we have
\begin{align*}
\langle \phi\varphi( K_{\alpha}),K_{1}K_{\delta}\rangle_K
=\langle \phi (K_{\alpha}),\phi (K_{1})\phi (K_{\delta})\rangle_K
=\langle K_{\alpha},K_{1}K_{\delta}\rangle_K.
\end{align*}
Since $\varphi$ and $\phi$ preserve the basis $\{K_{\alpha}|\alpha\in\mathcal{C}\}$, $\phi\varphi( K_{\alpha})$ is in $\{K_{\alpha}|\alpha\in\mathcal{C}\}$.
By Corollary \ref{lemma:geq4albefundeq}, we have $\phi\varphi( K_{\alpha})=K_\alpha$,
and hence  $\varphi(K_{\alpha})=\phi(K_{\alpha})$.
\end{proof}

\begin{lemma}\label{lemma:111=12213}
If $\varphi$ is a graded algebra automorphism of ${\rm {\Qsym}}$ that preserves the monomial basis, then
\begin{enumerate}
\item
$\varphi(M_\alpha)=M_\alpha$ for all $\alpha\in\{(1),(1,1),(2),(1,1,1),(3)\}$;
\item either $\varphi(M_{12})=M_{12}$, $\varphi(M_{21})=M_{21}$ or $\varphi(M_{12})=M_{21}$, $\varphi(M_{21})=M_{12}$.
\end{enumerate}
\end{lemma}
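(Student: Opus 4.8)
The plan is to bootstrap from Lemma \ref{lemma:identitymapgrdaed} using only the algebra structure of ${\rm QSym}$, namely the Pieri-type products $M_1M_\alpha$ recorded in Lemma \ref{lem:m1dalmonomialb}, together with the observation that $\varphi$ restricts to a permutation of the monomial basis of each graded piece ${\rm QSym}_n$. Throughout I will use that $\varphi$ is an algebra homomorphism, so that $\varphi(M_1M_\alpha)=\varphi(M_1)\varphi(M_\alpha)$, and that $\varphi(M_1)=M_1$ by Lemma \ref{lemma:identitymapgrdaed}.

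First I would dispose of degree $2$. By Lemma \ref{lem:m1dalmonomialb}, $M_1M_1=2M_{11}+M_2$, so applying $\varphi$ gives $2\varphi(M_{11})+\varphi(M_2)=M_1M_1=2M_{11}+M_2$. Since $\varphi$ permutes the basis $\{M_{11},M_2\}$ of ${\rm QSym}_2$, comparing coefficients forces $\varphi(M_{11})=M_{11}$ and $\varphi(M_2)=M_2$; the alternative (swapping $M_{11}$ and $M_2$) would give the contradiction $M_{11}=M_2$. Next, in degree $3$, $\varphi$ permutes $\{M_{111},M_{12},M_{21},M_3\}$, and Lemma \ref{lem:m1dalmonomialb} yields $M_1M_2=M_{12}+M_{21}+M_3$ and $M_1M_{11}=3M_{111}+M_{12}+M_{21}$. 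Applying $\varphi$ to the first identity, using $\varphi(M_1)=M_1$ and $\varphi(M_2)=M_2$, gives $\varphi(M_{12})+\varphi(M_{21})+\varphi(M_3)=M_{12}+M_{21}+M_3$; since the left-hand side is a sum of three distinct elements of the four-element basis of ${\rm QSym}_3$, it follows that $\{\varphi(M_{12}),\varphi(M_{21}),\varphi(M_3)\}=\{M_{12},M_{21},M_3\}$, and hence $\varphi(M_{111})=M_{111}$ by bijectivity. Applying $\varphi$ to the second identity, using $\varphi(M_{11})=M_{11}$ and $\varphi(M_{111})=M_{111}$, gives $\varphi(M_{12})+\varphi(M_{21})=M_{12}+M_{21}$, which forces $\{\varphi(M_{12}),\varphi(M_{21})\}=\{M_{12},M_{21}\}$ and therefore $\varphi(M_3)=M_3$. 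Collecting these facts gives part $(a)$, and the two possibilities for $\varphi$ on $\{M_{12},M_{21}\}$ give part $(b)$.

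I do not expect a genuine obstacle: every step is a coefficient comparison in a two- or three-term monomial expansion, and the only inputs are Lemma \ref{lemma:identitymapgrdaed}, the explicit formula of Lemma \ref{lem:m1dalmonomialb}, and the fact that $\varphi$ is bijective on each ${\rm QSym}_n$. The one point requiring care is the order in which the two degree-$3$ identities are used: $M_1M_2$ must be exploited first to pin down $\varphi(M_{111})$, and only afterwards does $M_1M_{11}$ separate $M_3$ from the pair $\{M_{12},M_{21}\}$, since neither product alone distinguishes all four basis elements of ${\rm QSym}_3$.
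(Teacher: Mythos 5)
Your proposal is correct and follows essentially the same route as the paper: apply $\varphi$ to the Pieri-type products $M_1M_1=2M_{11}+M_2$ and $M_1M_{11}=3M_{111}+M_{12}+M_{21}$ and compare coefficients, using that $\varphi$ permutes the monomial basis of each graded piece. The only difference is that the paper skips your detour through $M_1M_2$ entirely, since the coefficient $3$ of $M_{111}$ in $M_1M_{11}$ already pins down $\varphi(M_{111})=M_{111}$ directly, after which $\varphi(M_{12})+\varphi(M_{21})=M_{12}+M_{21}$ and the leftover element $M_3$ give parts $(b)$ and the rest of $(a)$.
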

\begin{proof}
By Lemma \ref{lemma:identitymapgrdaed}, we have $\varphi(M_1)=M_1$, so that $\varphi(M_1)\varphi(M_1)=M_1M_1=2M_{11}+M_2$.
Note that $\varphi$ is an algebra automorphism, we have $\langle\varphi(M_{11}),\varphi(M_1)\varphi(M_1)\rangle_M=\langle M_{11},M_1M_1\rangle_M=2$,
so that $\varphi(M_{11})=M_{11}$.
In an analogous manner we can show that $\varphi(M_2)=M_2$.

An analogous argument applying to
$$\varphi(M_{1}M_{11})=\varphi(M_1)\varphi(M_{11})=M_1M_{11}=3M_{111}+M_{12}+M_{21}$$
yields that
$\varphi(M_{111})=M_{111}$,  and $\varphi(M_{12})+\varphi(M_{21})=M_{12}+M_{21}$,
which means that either $\varphi(M_{12})=M_{12}$,
$\varphi(M_{21})=M_{21}$ or $\varphi(M_{12})=M_{21}$, $\varphi(M_{21})=M_{12}$.
The remaining basis element of degree $3$ is $M_3$, and hence $\varphi(M_3)=M_3$.
The proof follows.
\end{proof}

We now show that the map $\rho$ preserves the monomial basis.
\begin{lemma}\label{lemma:rhomonbalha1}
For any composition $\alpha$, we have $\rho(M_\alpha)=M_{\alpha^r}$.
\end{lemma}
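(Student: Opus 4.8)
The plan is to prove $\rho(M_\alpha) = M_{\alpha^r}$ by first recalling that $\rho$ is defined on the fundamental basis by $\rho(F_\alpha) = F_{\alpha^r}$, and then translating this to the monomial basis via the change-of-basis formula in Eq.\eqref{eq:MFbas}. The key observation is that the reversal operation $\alpha \mapsto \alpha^r$ interacts nicely with the refinement order $\preceq$: namely $\beta \preceq \alpha$ if and only if $\beta^r \preceq \alpha^r$, and moreover reversal preserves length, i.e.\ $\ell(\beta^r) = \ell(\beta)$.

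First I would establish the combinatorial fact that reversal is an order-automorphism of the refinement poset $(\mathcal{C}, \preceq)$. This follows from the definition: $\beta \preceq \alpha$ means $\alpha$ is obtained from $\beta$ by summing consecutive parts, and applying reversal to both sides shows $\alpha^r$ is obtained from $\beta^r$ by summing consecutive parts (the blocks of consecutive parts being summed simply get reversed in order and internally). Equivalently, one can phrase this via $\Set$: one checks $\Set(\alpha^r) = \{\,|\alpha| - s \mid s \in \Set(\alpha)\,\}$, so the containment $\Set(\alpha) \subseteq \Set(\beta)$ is equivalent to $\Set(\alpha^r) \subseteq \Set(\beta^r)$. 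Since reversal is an involution, it is a bijection on $\{\beta : \beta \preceq \alpha\}$ onto $\{\beta : \beta \preceq \alpha^r\}$.

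Then I would compute directly:
\begin{align*}
\rho(M_\alpha) &= \rho\left(\sum_{\beta \preceq \alpha}(-1)^{\ell(\beta)-\ell(\alpha)} F_\beta\right)
= \sum_{\beta \preceq \alpha}(-1)^{\ell(\beta)-\ell(\alpha)} F_{\beta^r}.
\end{align*}
Re-indexing the sum by $\gamma = \beta^r$, and using that $\beta \preceq \alpha \iff \gamma \preceq \alpha^r$ together with $\ell(\gamma) = \ell(\beta)$ and $\ell(\alpha^r) = \ell(\alpha)$, this becomes
\begin{align*}
\rho(M_\alpha) = \sum_{\gamma \preceq \alpha^r}(-1)^{\ell(\gamma)-\ell(\alpha^r)} F_{\gamma} = M_{\alpha^r},
\end{align*}
again by Eq.\eqref{eq:MFbas}. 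Alternatively, and perhaps more cleanly, I could use the definition $F_\alpha = \sum_{\beta \preceq \alpha} M_\beta$ directly: since $\{F_{\alpha^r}\}$ is a basis, it suffices to show the map $M_\alpha \mapsto M_{\alpha^r}$ sends $F_\alpha$ to $F_{\alpha^r}$, which is immediate from the same order-isomorphism observation, and then invoke that a linear map is determined by its values on a basis.

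I do not expect any genuine obstacle here; the only point requiring a sentence of care is verifying that reversal respects the refinement order and preserves length, which is elementary. The statement is essentially a bookkeeping translation between the two bases using that $\rho$ is already known (from \cite{M93,MR95}) to be the algebra automorphism with $\rho(F_\alpha) = F_{\alpha^r}$, so the content is purely the poset-theoretic compatibility of reversal with $\preceq$.
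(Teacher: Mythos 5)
Your proposal is correct and follows essentially the same route as the paper: apply $\rho$ to the M\"obius inversion formula \eqref{eq:MFbas}, then re-index the sum using that reversal is a length-preserving order-automorphism of the refinement poset. The only difference is that you spell out the verification that $\beta\preceq\alpha$ is equivalent to $\beta^r\preceq\alpha^r$ (via $\Set(\alpha^r)=\{|\alpha|-s \mid s\in\Set(\alpha)\}$), which the paper simply asserts.
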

\begin{proof}
By Eqs. \eqref{iafcom} and \eqref{eq:MFbas}, we have
\begin{align*}
    \rho(M_{\alpha})=\sum_{\beta\preceq\alpha}(-1)^{\ell(\beta)-\ell(\alpha)}F_{\beta^r}.
\end{align*}
Observe that $\ell(\alpha)=\ell(\alpha^r)$, $\ell(\beta)=\ell(\beta^r)$, and that $\beta\preceq\alpha$ is equivalent to $\beta^r\preceq\alpha^r$.
Thus it follows at once from Eq.\eqref{iafcom} that
\begin{align*}
    \rho(M_{\alpha})=\sum_{\beta^r\preceq\alpha^r}(-1)^{\ell(\beta^r)-\ell(\alpha^r)}F_{\beta^r}=M_{\alpha^r},
\end{align*}
and we are done.
\end{proof}

\begin{proposition}\label{lemma:kcalgautoast}
The map $\rho$ is
the unique nontrivial graded algebra automorphism of ${\rm {\Qsym}}$ that takes the monomial basis into itself.
\end{proposition}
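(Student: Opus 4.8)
The plan is to first check that $\rho$ itself has the required properties, and then to prove uniqueness by a bootstrapping argument from low degrees, with Lemma~\ref{lemma:alghomalbevaralbe} serving as the inductive engine. For the first part I would note that $\rho$ is an algebra automorphism of ${\rm {\Qsym}}$ by the theorem of Malvenuto--Reutenauer recalled in the introduction, that it is graded because reversal preserves weight, that by Lemma~\ref{lemma:rhomonbalha1} it satisfies $\rho(M_\alpha)=M_{\alpha^r}$ and hence permutes the monomial basis within each degree, and that it is nontrivial since $\rho(M_{12})=M_{21}\neq M_{12}$.

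Next I would set up the uniqueness. Let $\varphi$ be an arbitrary nontrivial graded algebra automorphism of ${\rm {\Qsym}}$ preserving the monomial basis. By Lemma~\ref{lemma:111=12213}, $\varphi$ fixes $M_\alpha$ for every composition $\alpha$ with $|\alpha|\leq 3$ other than $M_{12}$ and $M_{21}$, and for those two there are exactly two possibilities: $\varphi$ fixes both, or $\varphi$ interchanges them. If $\varphi$ fixed both, then $\varphi$ would agree with $\mathrm{id}$ on all $M_\alpha$ with $|\alpha|\leq 3$; since $\mathrm{id}$ is an idempotent graded algebra automorphism preserving the monomial basis, Lemma~\ref{lemma:alghomalbevaralbe} (with $K=M$, $\phi=\mathrm{id}$) would force $\varphi=\mathrm{id}$, contradicting nontriviality. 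Hence $\varphi(M_{12})=M_{21}$ and $\varphi(M_{21})=M_{12}$, and since $(1)^r=(1)$, $(1,1)^r=(1,1)$, $(2)^r=(2)$, $(1,1,1)^r=(1,1,1)$, $(3)^r=(3)$, $(1,2)^r=(2,1)$, $(2,1)^r=(1,2)$, Lemma~\ref{lemma:rhomonbalha1} shows that $\varphi$ and $\rho$ agree on $M_\alpha$ for all $\alpha$ with $|\alpha|\leq 3$.

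To finish, I would pass to the composite $\rho\circ\varphi$, which is again a graded algebra automorphism of ${\rm {\Qsym}}$ preserving the monomial basis. Because $\rho$ is an involution, $\rho\circ\rho=\mathrm{id}$, so by the previous step $\rho\circ\varphi$ agrees with $\mathrm{id}$ on every $M_\alpha$ with $|\alpha|\leq 3$. Applying Lemma~\ref{lemma:alghomalbevaralbe} once more (with $K=M$, $\phi=\mathrm{id}$) yields $\rho\circ\varphi=\mathrm{id}$, hence $\varphi=\rho^{-1}=\rho$, completing the proof.

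The one point that needs care, and the reason the composite $\rho\circ\varphi$ must be introduced, is that Lemma~\ref{lemma:alghomalbevaralbe} requires the reference automorphism $\phi$ to satisfy $\phi^2=\phi$, and the identity is the only idempotent graded algebra automorphism; since $\rho$ is an involution but not idempotent, one cannot compare $\varphi$ and $\rho$ directly through that lemma, and must instead reduce everything to a comparison with $\mathrm{id}$. All remaining work is the low-degree bookkeeping already carried out in Lemmas~\ref{lemma:identitymapgrdaed}, \ref{lemma:111=12213} and \ref{lemma:rhomonbalha1}, so I do not expect any further obstacle.
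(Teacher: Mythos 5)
Your proof is correct and follows the same strategy as the paper: Lemma~\ref{lemma:111=12213} pins down $\varphi$ in degrees at most $3$, and Lemma~\ref{lemma:alghomalbevaralbe} propagates the identification to all degrees. The only difference is that the paper applies Lemma~\ref{lemma:alghomalbevaralbe} directly with $\phi=\rho$ (its hypothesis ``$\phi^{2}=\phi$'' is evidently a misprint for $\phi^{2}=\mathrm{id}$, as both its own proof and its later applications to $\Psi,\rho,\omega$ show), whereas you sidestep the issue by comparing $\rho\circ\varphi$ with the identity --- a harmless and in fact slightly more careful variant of the same argument.
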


\begin{proof}
By \cite[page 50]{M93} and
Lemma \ref{lemma:rhomonbalha1}, the map $\rho$  is a nontrivial graded algebra automorphism that takes the monomial basis into itself.
Conversely, we need to show that any nontrivial graded algebra automorphism $\varphi$ of ${\rm {\Qsym}}$ preserving the monomial basis must coincide with
$\rho$.

By Lemma \ref{lemma:111=12213},  there are two possibilities for the values of $\varphi$ on compositions of weight $\leq3$:
either
$\varphi(M_\alpha)=M_\alpha$ for all $\alpha$ with $|\alpha|\leq 3$ or
$\varphi(M_\alpha)=M_{\alpha^r}$ for all $\alpha$ with $|\alpha|\leq3$. By Lemma \ref{lemma:alghomalbevaralbe},
$\varphi$ is the identity map in the former case,
and $\varphi=\rho$ in the second case.
Therefore, $\varphi=\rho$ if $\varphi$ is nontrivial,
completing the proof.
\end{proof}

\begin{proposition}\label{lemma:kcoalgautoast}
There are no nontrivial graded coalgebra automorphisms of ${\rm {\Qsym}}$ that take the monomial basis into itself.
\end{proposition}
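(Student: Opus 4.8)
The plan is to exploit the fact that, expressed in the monomial basis, the comultiplication of ${\rm QSym}$ is the deconcatenation coproduct $\Delta(M_\alpha)=\sum_{\alpha=\beta\cdot\gamma}M_\beta\otimes M_\gamma$ of Eq.\eqref{qsprocop}, which is an extremely rigid structure. Let $\varphi$ be a graded coalgebra automorphism of ${\rm QSym}$ that takes the monomial basis into itself, and write $\varphi(M_\alpha)=M_{\sigma(\alpha)}$ where $\sigma$ is a weight-preserving bijection of $\mathcal{C}$. The goal is to prove $\sigma=\mathrm{id}$, so that $\varphi$ is trivial.

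The first step is to observe that $\sigma$ preserves length. Since $\{M_\beta\otimes M_\gamma\mid\beta,\gamma\in\mathcal{C}\}$ is a basis of ${\rm QSym}\otimes{\rm QSym}$ and $\varphi\otimes\varphi$ permutes it (sending $M_\beta\otimes M_\gamma$ to $M_{\sigma(\beta)}\otimes M_{\sigma(\gamma)}$), the element $(\varphi\otimes\varphi)\Delta(M_\alpha)$ is a sum of $\ell(\alpha)+1$ distinct basis vectors, each with coefficient $1$ and with no cancellation; but this equals $\Delta(M_{\sigma(\alpha)})$, a sum of $\ell(\sigma(\alpha))+1$ distinct basis vectors, forcing $\ell(\sigma(\alpha))=\ell(\alpha)$. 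In particular $\sigma(\emptyset)=\emptyset$ and, since $\sigma$ preserves both weight and length, $\sigma((n))=(n)$ for every $n\geq1$.

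Next I would prove $\sigma(\alpha)=\alpha$ for all $\alpha$ by induction on $\ell(\alpha)$, the cases $\ell(\alpha)\le 1$ being the previous step. Let $\alpha=(\alpha_1,\dots,\alpha_\ell)$ with $\ell\ge 2$, and write $\sigma(\alpha)=(\tau_1,\dots,\tau_\ell)$ (of length $\ell$ by the previous step). Expanding $(\varphi\otimes\varphi)\Delta(M_\alpha)=\sum_{i=0}^{\ell}\varphi(M_{(\alpha_1,\dots,\alpha_i)})\otimes\varphi(M_{(\alpha_{i+1},\dots,\alpha_\ell)})$, every summand with $1\le i\le\ell-1$ has both tensor factors of length strictly less than $\ell$, so the induction hypothesis leaves it unchanged, while the summands $i=0$ and $i=\ell$ become $M_\emptyset\otimes M_{\sigma(\alpha)}$ and $M_{\sigma(\alpha)}\otimes M_\emptyset$. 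Comparing this with $\Delta(M_{\sigma(\alpha)})=\sum_{j=0}^{\ell}M_{(\tau_1,\dots,\tau_j)}\otimes M_{(\tau_{j+1},\dots,\tau_\ell)}$, I would single out on each side the unique summand whose left tensor factor has length exactly $1$ (unique because the left factors occurring are the $\ell+1$ prefixes, of pairwise distinct lengths $0,1,\dots,\ell$): on the left it is $M_{(\alpha_1)}\otimes M_{(\alpha_2,\dots,\alpha_\ell)}$ and on the right it is $M_{(\tau_1)}\otimes M_{(\tau_2,\dots,\tau_\ell)}$. Equating these basis vectors yields $\tau_1=\alpha_1$ and $(\tau_2,\dots,\tau_\ell)=(\alpha_2,\dots,\alpha_\ell)$, hence $\sigma(\alpha)=\alpha$; this closes the induction and gives $\varphi=\mathrm{id}$.

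I do not expect a genuine obstacle. The only point needing a little care is the bookkeeping that isolates a single basis vector from the two coproduct expansions --- checking that no two summands on a given side coincide and that the ``left factor of length $1$'' summand is unique --- and this is immediate once one notes that the left tensor factors appearing in a deconcatenation coproduct have pairwise distinct lengths $0,1,\dots,\ell$. One could alternatively phrase the argument by identifying $({\rm QSym},\Delta)$ with the monomial basis as the cotensor coalgebra on the span of $\{M_{(n)}\mid n\ge1\}$, but the direct induction above is shorter and self-contained.
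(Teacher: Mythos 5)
Your proof is correct and follows essentially the same route as the paper: both arguments exploit the deconcatenation form of $\Delta$ on the monomial basis and pin down $\varphi(M_\alpha)$ by isolating the summand $M_{(\alpha_1)}\otimes M_{(\alpha_2,\dots,\alpha_\ell)}$, which by induction is fixed and forces $\varphi(M_\alpha)=M_{(\alpha_1)\cdot(\alpha_2,\dots,\alpha_\ell)}=M_\alpha$. The only (immaterial) differences are that you induct on length rather than weight and add a preliminary length-preservation step, whereas the paper inducts on weight and handles $\alpha=(n)$ via primitivity.
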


\begin{proof}
Let $\varphi$ be a graded coalgebra automorphism of ${\rm {\Qsym}}$ that takes the monomial basis into itself. Then for any composition $\alpha$
we have $(\varphi\otimes \varphi)(\Delta(M_{\alpha}))=\Delta(\varphi(M_\alpha))$.

We prove by induction on $|\alpha|$, the weight of $\alpha$, that $\varphi(M_\alpha)=M_\alpha$. The case  $|\alpha|=1$ is straightforward and hence  omitted.
Now suppose that  $\varphi(M_\alpha)=M_\alpha$ for all compositions $\alpha$ with $|\alpha|\leq n-1$.

Let $\alpha=(\alpha_1,\alpha_2,\cdots,\alpha_k)$ be a composition of $n$.
If $k=1$, then $\alpha=(n)$, and  $\Delta(\varphi(M_{\alpha}))=(\varphi\otimes \varphi)(\Delta(M_{\alpha}))=1\otimes \varphi(M_{\alpha})+\varphi(M_{\alpha})\otimes 1$, whence $\varphi(M_\alpha)=M_\alpha$.
If $k\geq2$, then, by hypothesis,   $M_{(\alpha_1)}\otimes M_{(\alpha_2,\cdots,\alpha_k)}=\varphi(M_{(\alpha_1)})\otimes \varphi(M_{(\alpha_2,\cdots,\alpha_k)})$,
 which is a term of $(\varphi\otimes \varphi)(\Delta(M_{\alpha}))$. So $M_{(\alpha_1)}\otimes M_{(\alpha_2,\cdots,\alpha_k)}$
 is a term of $\Delta(\varphi(M_\alpha))$ and hence $\varphi(M_\alpha)=M_\alpha$, as required.
\end{proof}

With these results at hand, we can now embark on the main theorem.
\begin{theorem}\label{monomicomalone}
${\rm {\Qsym}}$ is rigid as a Hopf algebra with respect to the monomial basis, i.e., there are no nontrivial graded Hopf algebra automorphisms preserving  the set of monomial quasisymmetric functions.
\end{theorem}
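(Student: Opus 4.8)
The plan is to deduce the theorem directly from the two structural results just established. Any graded Hopf algebra automorphism $\varphi$ of ${\rm {\Qsym}}$ that takes the set of monomial quasisymmetric functions into itself is, in particular, a graded coalgebra automorphism of ${\rm {\Qsym}}$ preserving the monomial basis; by Proposition \ref{lemma:kcoalgautoast} such a map must be the identity. Hence there is no nontrivial graded Hopf algebra automorphism preserving the monomial basis, which is exactly the assertion. So the proof is a one-line synthesis, and the only point requiring care is to record explicitly that a Hopf algebra automorphism is simultaneously an algebra automorphism and a coalgebra automorphism, so that Proposition \ref{lemma:kcoalgautoast} applies.

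It is worth pairing this with the complementary viewpoint coming from the multiplicative side, since it explains why $\rho$ drops out. By Proposition \ref{lemma:kcalgautoast}, the only candidates for a graded algebra automorphism preserving the monomial basis are the identity and $\rho$, so a Hopf-algebra automorphism preserving that basis is one of these two; it then remains to observe that $\rho$ is not comultiplication-preserving. This is a finite check in weight $3$: using $\rho(M_\alpha)=M_{\alpha^r}$ from Lemma \ref{lemma:rhomonbalha1} and the coproduct formula in Eq.\eqref{qsprocop}, one has $\Delta(\rho(M_{21}))=\Delta(M_{12})=1\otimes M_{12}+M_1\otimes M_2+M_{12}\otimes 1$, whereas $(\rho\otimes\rho)(\Delta(M_{21}))=1\otimes M_{12}+M_2\otimes M_1+M_{12}\otimes 1$; these differ because $M_2\otimes M_1\neq M_1\otimes M_2$. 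Thus $\rho$ fails to commute with $\Delta$, and only the identity survives.

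I do not expect a genuine obstacle here: both ingredients are already in hand, the combinatorial heart of the matter (the poset computations of Section \ref{sec:perposet3} and the inductive arguments behind Propositions \ref{lemma:kcalgautoast} and \ref{lemma:kcoalgautoast}) having been carried out beforehand. The present statement is merely the place where the algebra-rigidity and coalgebra-rigidity results are combined, and the mild subtlety is just to phrase the deduction so that it is transparent that a single one of the two propositions already forces the conclusion.
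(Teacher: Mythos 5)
Your proposal is correct and follows essentially the same route as the paper, which likewise combines Propositions~\ref{lemma:kcalgautoast} and~\ref{lemma:kcoalgautoast} (the paper phrases it as: $\rho$ is the only candidate on the algebra side, but it is ruled out because it is not a coalgebra automorphism). Your first paragraph's observation that Proposition~\ref{lemma:kcoalgautoast} alone already forces the conclusion is a mild streamlining, and your explicit weight-$3$ check that $\Delta(\rho(M_{21}))\neq(\rho\otimes\rho)\Delta(M_{21})$ is a correct, concrete substitute for the paper's appeal to that proposition.
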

\begin{proof}
By Proposition \ref{lemma:kcalgautoast}, $\rho$ is the only possible nontrivial graded algebra automorphism of  ${\rm {\Qsym}}$ that preserves the set of monomial quasisymmetric functions.
However, by Proposition \ref{lemma:kcoalgautoast}, $\rho$ is not a coalgebra automorphism. Thus, there are no nontrivial desired graded Hopf algebra automorphisms.
\end{proof}


\section{Rigidity with respect to the fundamental quasisymmetric basis}\label{sec:rwrttfb}

In this section, we show that the map $\Psi$ defined by Eq.\eqref{iafcom} is the unique nontrivial graded Hopf automorphism preserving the basis of fundamental
quasisymmetric functions.

\begin{proposition}\label{prop:fundambalg}
The maps $\Psi$, $\rho$ and $\omega$ are the only nontrivial graded algebra automorphisms of ${\rm {\Qsym}}$ that take the fundamental quasisymmetric basis into itself.
\end{proposition}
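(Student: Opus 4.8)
The plan is to mimic the strategy of Proposition~\ref{lemma:kcalgautoast}, but now with three competing automorphisms $\Psi,\rho,\omega$ in place of one, and with the richer multiplication rule \eqref{fundbasf1falp} for the fundamental basis. That $\Psi,\rho,\omega$ are themselves nontrivial involutive graded algebra automorphisms carrying $\{F_\alpha\}$ to itself was recalled in Section~\ref{sec:int}, so it suffices to show that any nontrivial graded algebra automorphism $\varphi$ of ${\rm {\Qsym}}$ preserving the fundamental basis must coincide with one of them. By Lemma~\ref{lemma:identitymapgrdaed} we have $\varphi(F_\emptyset)=F_\emptyset$, $\varphi(F_1)=F_1$, and $\varphi$ either fixes both $F_{11},F_2$ or interchanges them. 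The idea is to pin down $\varphi$ on all compositions of weight $\le 3$, show it must agree there with one of $\id,\rho,\omega,\Psi$, and then bootstrap to all of ${\rm {\Qsym}}$.

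For the degree-$3$ analysis I would first record, using \eqref{fundbasf1falp} and the covering relation $\prec_F$, the two products
\[
F_1F_{11}=F_{111}+F_{12}+F_{21},\qquad F_1F_2=F_3+F_{12}+F_{21},
\]
(the covers of $(1,1)$ in $(\mathcal F,\le_F)$ are $(2,1),(1,2),(1,1,1)$, and those of $(2)$ are $(3),(1,2),(2,1)$). Suppose first $\varphi(F_{11})=F_{11}$ and $\varphi(F_2)=F_2$. Applying $\varphi$ to the two displayed identities shows $\varphi$ permutes $\{F_{111},F_{12},F_{21}\}$ and permutes $\{F_3,F_{12},F_{21}\}$; since $\varphi$ restricts to a bijection of the weight-$3$ basis $\{F_3,F_{12},F_{21},F_{111}\}$ preserving each of these two triples, it preserves their intersection $\{F_{12},F_{21}\}$ and fixes each of $F_3$ and $F_{111}$. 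Hence $\varphi$ agrees on weight $\le 3$ either with $\id$ (if it fixes $F_{12},F_{21}$) or with $\rho$ (if it swaps them). If instead $\varphi(F_{11})=F_2$ and $\varphi(F_2)=F_{11}$, the same two products give $\varphi(\{F_{111},F_{12},F_{21}\})=\{F_3,F_{12},F_{21}\}$ and $\varphi(\{F_3,F_{12},F_{21}\})=\{F_{111},F_{12},F_{21}\}$; an injectivity argument on the common part $\{F_{12},F_{21}\}$ then forces $\varphi(F_{111})=F_3$, $\varphi(F_3)=F_{111}$, and again $\varphi$ either fixes or swaps $F_{12},F_{21}$ --- the two possibilities being exactly $\omega$ and $\Psi$ on weight $\le 3$. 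Matching the surviving cases with $\id,\rho,\omega,\Psi$ uses only the explicit values of reversal, complement and transpose on the compositions of $2$ and $3$, a direct check; in particular these four restrictions to weight $\le 3$ are pairwise distinct.

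To pass from weight $\le 3$ to all of ${\rm {\Qsym}}$, I would exploit that each $g\in\{\id,\rho,\omega,\Psi\}$ is an involutive graded algebra automorphism preserving $\{F_\alpha\}$. If $\varphi$ agrees with $g$ on all compositions of weight $\le 3$, then $g\circ\varphi$ is a graded algebra automorphism preserving $\{F_\alpha\}$ that agrees on weight $\le 3$ with the idempotent map $\id$; so Lemma~\ref{lemma:alghomalbevaralbe}, applied to the pair $g\circ\varphi$ and $\id$, gives $g\circ\varphi=\id$, i.e.\ $\varphi=g^{-1}=g$. Therefore $\varphi\in\{\id,\rho,\omega,\Psi\}$, and the nontrivial ones are precisely $\rho,\omega,\Psi$.

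The main obstacle is not the computation --- the two products above are immediate Pieri applications --- but two bookkeeping points. First, Lemma~\ref{lemma:alghomalbevaralbe} is stated for an \emph{idempotent} comparison map, whereas $\rho,\omega,\Psi$ are involutions (not idempotents), so one must route the final step through $\id$ by pre-composing $\varphi$ with the relevant involution. Second, one must make sure the degree-$3$ case analysis is genuinely exhaustive and that each surviving possibility is realized by exactly one of $\id,\rho,\omega,\Psi$ and by no spurious candidate; this is where the explicit computation of $\alpha^r,\alpha^c,\alpha^t$ for $|\alpha|\le 3$ is essential, and it also records the distinctness of the four restrictions that makes the bootstrap unambiguous.
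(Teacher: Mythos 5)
Your proposal is correct and follows essentially the same route as the paper: pin down $\varphi$ in degrees $\le 3$ using the two Pieri products $F_1F_{11}=F_{111}+F_{12}+F_{21}$ and $F_1F_2=F_3+F_{12}+F_{21}$, match the surviving cases with $\id,\rho,\omega,\Psi$, and then invoke Lemma~\ref{lemma:alghomalbevaralbe} to propagate to all degrees. Your one deviation is a genuine improvement in bookkeeping: the paper applies Lemma~\ref{lemma:alghomalbevaralbe} directly with $\phi\in\{\Psi,\rho,\omega\}$ even though its stated hypothesis $\phi^2=\phi$ is, for an automorphism, satisfied only by $\phi=\id$ (the lemma's proof actually uses $\phi^2=\id$, so the hypothesis is evidently a typo for involutivity), whereas your device of applying the lemma to the pair $(g\circ\varphi,\id)$ and concluding $\varphi=g^{-1}=g$ makes the argument valid under the lemma exactly as written.
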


\begin{proof}
Let $\varphi$ be a nontrivial graded algebra automorphism of ${\rm {\Qsym}}$ such that $\varphi(\{F_{\alpha}|\alpha\in \mathcal{C}\})=\{F_{\alpha}|\alpha\in \mathcal{C}\}$.
From \cite[Th\'eor\`eme 4.12 and page 50]{M93} and \cite[Corollary 2.4]{MR95} we know that $\Psi, \rho$, and $\omega$ are graded algebra automorphisms of ${\rm {\Qsym}}$. So we only need to show that $\varphi\in\{\Psi, \rho,\omega\}$.

By Lemma \ref{lemma:alghomalbevaralbe},  $\varphi$ is not the identity map in degree less than $4$.
By Lemma \ref{lemma:identitymapgrdaed},  we have $\varphi(F_{1})=F_{1}$. Thus, $\varphi$ is not the identity map
at least in one of degree $2$ and $3$.
For degree $2$ there are two possibilities, i.e.,

Case $(1).$ $\varphi(F_{11})=F_{11}$, $\varphi(F_{2})=F_{2}$ and

Case $(2).$  $\varphi(F_{11})=F_{2}$, $\varphi(F_{2})=F_{11}$.

In the first case,  $\varphi$ is the identity map in degree $2$. So it is nontrivial in degree $3$.
By Eq.\eqref{fundbasf1falp},  we have
\begin{align}\label{eq:f1f2prod}
F_{1}F_{2}=F_{12}+F_{21}+F_{3}.
\end{align}
Since $\varphi$ is an algebra automorphism, we have
\begin{align*}
\langle \varphi(F_{111}),F_{1}F_{2}\rangle_F=\langle \varphi(F_{111}),\varphi(F_{1})\varphi(F_{2})\rangle_F=\langle F_{111},F_{1}F_{2}\rangle_F=0.
\end{align*}
Note that  $F_{111},F_{12},F_{21}$ and $F_{3}$ are the all fundamental quasisymmetric functions of degree $3$, and hence $\varphi(F_{111})=F_{111}$.
Similarly, from $\langle \varphi(F_{3}),F_{1}F_{11}\rangle_F=\langle F_{3},F_{1}F_{11}\rangle_F=0$
we conclude that $\varphi(F_{3})=F_{3}$.
But $\varphi$ is  not the identity map in degree $3$, so that we have $\varphi(F_{12})=F_{21}$ and $\varphi(F_{21})=F_{12}$.
This yields that $\varphi(F_{\alpha})=F_{\alpha^r}$ for all composition $\alpha$ with $|\alpha|\leq 3$.

In the second case, it follows from
$$\langle \varphi(F_{111}),F_{1}F_{11}\rangle_F= \langle \varphi(F_{111}),\varphi(F_{1})\varphi(F_{2})\rangle_F=\langle F_{111},F_{1}F_{2}\rangle_F=0$$
that $\varphi(F_{111})=F_{3}$. Similarly, from $\langle \varphi(F_{3}),F_{1}F_{2}\rangle_F=\langle F_{3},F_{1}F_{11}\rangle_F=0$
we conclude that $\varphi(F_{3})=F_{111}$. Therefore, for any $\alpha$ with $|\alpha|\leq 3$, we have $\varphi(F_\alpha)=F_{\alpha^c}$ if $\varphi(F_{12})=F_{21}$, $\varphi(F_{21})=F_{12}$
and have $\varphi(F_\alpha)=F_{\alpha^t}$  if $\varphi(F_{12})=F_{{12}}$, $\varphi(F_{21})=F_{21}$.

Applying Lemma \ref{lemma:alghomalbevaralbe}, we see that $\varphi\in\{\Psi, \rho,\omega\}$.
\end{proof}

Our next aim is to show that $\Psi$ is the unique nontrivial graded coalgebra automorphism of ${\rm {\Qsym}}$ that takes the fundamental quasisymmetric basis into itself.
To this end, we first investigate a connection between the concatenation and near concatenation of compositions.

Let $a$ be an integer and $B=\{b_1,b_2,\cdots,b_k\}$ a set of  integers. We define
$$
a+B=\{a+b_1,a+b_2,\cdots,a+b_k\}.
$$

\begin{lemma}\label{nearandconcatedes}
Let $\delta,\alpha,\beta$ be compositions with $|\delta|=|\alpha|+|\beta|$. Then
\begin{enumerate}
\item\label{detacdotalbeta} $\delta=\alpha\cdot \beta$ if and only if $\Set(\delta)=\Set(\alpha)\cup\{|\alpha|\} \cup (|\alpha|+\Set(\beta))$;

\item\label{detaodotalbetb}  $\delta=\alpha\odot \beta$ if and only if $\Set(\delta)=\Set(\alpha)\cup(|\alpha|+\Set(\beta))$.
\end{enumerate}
\end{lemma}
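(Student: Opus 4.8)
The plan is to reduce both statements to a direct computation of partial sums, using that the map $\alpha\mapsto\Set(\alpha)$ is a bijection from the compositions of a fixed integer $n$ onto the subsets of $[n-1]$, with inverse $\Comp$. Once the two displayed set identities are verified for $\delta=\alpha\cdot\beta$ and for $\delta=\alpha\odot\beta$, the ``if'' directions are automatic: since $|\delta|=|\alpha|+|\beta|=|\alpha\cdot\beta|=|\alpha\odot\beta|$, any of these compositions with the same associated subset of $[\,|\delta|-1\,]$ must coincide. So the only real content is the ``only if'' direction, and that is a one-line bookkeeping computation.

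Concretely, write $\alpha=(\alpha_1,\dots,\alpha_k)$, $\beta=(\beta_1,\dots,\beta_l)$ and put $m=|\alpha|$, assuming first that $\alpha,\beta\neq\emptyset$. For \eqref{detacdotalbeta}, the composition $\alpha\cdot\beta=(\alpha_1,\dots,\alpha_k,\beta_1,\dots,\beta_l)$ has as its proper partial sums exactly the numbers $\alpha_1+\cdots+\alpha_i$ for $1\le i\le k-1$, the number $\alpha_1+\cdots+\alpha_k=m$, and the numbers $m+\beta_1+\cdots+\beta_j$ for $1\le j\le l-1$; by definition of $\Set$ these three families are $\Set(\alpha)$, $\{m\}$, and $m+\Set(\beta)$, and they are pairwise disjoint since $\Set(\alpha)\subseteq[m-1]$ lies strictly below $m$ while $m+\Set(\beta)\subseteq[m+1,\,m+|\beta|-1]$ lies strictly above $m$. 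Hence $\Set(\alpha\cdot\beta)=\Set(\alpha)\cup\{m\}\cup(m+\Set(\beta))$. For \eqref{detaodotalbetb}, the composition $\alpha\odot\beta=(\alpha_1,\dots,\alpha_{k-1},\alpha_k+\beta_1,\beta_2,\dots,\beta_l)$ has proper partial sums $\alpha_1+\cdots+\alpha_i$ for $1\le i\le k-1$ together with $m+\beta_1+\cdots+\beta_j$ for $1\le j\le l-1$; the first family is $\Set(\alpha)$, the second is $m+\Set(\beta)$, they are disjoint, and so $\Set(\alpha\odot\beta)=\Set(\alpha)\cup(m+\Set(\beta))$.

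It remains to dispose of the degenerate cases with the conventions $\Set(\emptyset)=\emptyset$ and $\emptyset\cdot\gamma=\gamma\cdot\emptyset=\emptyset\odot\gamma=\gamma\odot\emptyset=\gamma$. Part \eqref{detaodotalbetb} then holds for all compositions: if $\alpha=\emptyset$ both sides read $\Set(\beta)$, and if $\beta=\emptyset$ both sides read $\Set(\alpha)$. In part \eqref{detacdotalbeta} one should keep $\alpha$ and $\beta$ nonempty, equivalently $1\le|\alpha|\le|\delta|-1$, since otherwise the singleton $\{|\alpha|\}$ would fall outside $[\,|\delta|-1\,]$; with this harmless restriction the statement is exactly the computation above. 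I expect no genuine obstacle: the lemma is pure bookkeeping about the subset encoding of compositions, and the only points that need a little care are tracking the disjointness of the three pieces in \eqref{detacdotalbeta} and being consistent about the empty composition.
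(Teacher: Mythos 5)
Your proof is correct and follows essentially the same route as the paper: the substance in both is the same partial-sum bookkeeping identifying $\Set(\alpha\cdot\beta)$ and $\Set(\alpha\odot\beta)$. The only (harmless) difference is that you obtain the converse directions from injectivity of $\Set$ on compositions of fixed weight, whereas the paper re-derives $\delta$ explicitly by applying $\Comp$ to the given set; your attention to the empty-composition edge cases is a small refinement the paper omits.
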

\begin{proof}
Let $\delta$ be the composition $(\delta_1,\delta_2,\cdots,\delta_k)$ with weight $m$.
If $\delta=\alpha\cdot \beta$, then there exists positive integer $i$ with $1\leq i\leq k-1$ such that
$\alpha= (\delta_1,\cdots,\delta_i)$, $\beta=(\delta_{i+1},\cdots,\delta_k)$. Thus,
$\Set(\delta)=\Set(\alpha)\cup\{|\alpha|\} \cup (|\alpha|+\Set(\beta))$.
If  $\delta=\alpha\odot \beta$, then there exists positive integers $a,b$ such that $\delta_{i}=a+b$ for some $i$ with $1\leq i\leq k$
and $\alpha=(\delta_1,\cdots,\delta_{i-1},a)$, $\beta=(b,\delta_{i+1},\cdots,\delta_k)$. Then $\Set(\delta)=\Set(\alpha)\cup(|\alpha|+\Set(\beta))$.

Conversely, let $\alpha\models p$, $\beta\models q$ with ${\rm Set}(\alpha)=\{a_1,a_2,\cdots,a_i\}$
and ${\rm Set}(\beta)=\{b_1,b_2,\cdots,b_j\}$. Thus, $p+q=m$.  If ${\rm Set}(\delta)={\rm Set}(\alpha)\cup\{|\alpha|\} \cup (|\alpha|+\Set(\beta))$,
then
\begin{align*}
{\rm Set}(\delta)=\{a_1,a_2,\cdots,a_i,p,p+b_1,p+b_2,\cdots,p+b_j\},
\end{align*}
and hence
\begin{align*}
\delta=(a_1,a_2-a_1,\cdots,a_i-a_{i-1},p-a_{i}, b_1,b_2-b_1,\cdots,b_{j}-b_{j-1},q-b_j)
=\alpha\cdot\beta.
\end{align*}
If $\Set(\delta)=\Set(\alpha)\cup(|\alpha|+\Set(\beta))$, then
\begin{align*}
{\rm Set}(\alpha)=\{a_1,a_2,\cdots,a_i,p+b_1,p+b_2,\cdots,p+b_j\},
\end{align*}
whence
\begin{align*}
\delta=(a_1,a_2-a_1,\cdots,a_i-a_{i-1},p-a_{i}+b_1,b_2-b_1,\cdots,b_{j}-b_{j-1},q-b_j)
=\alpha\odot\beta,
\end{align*}
as required.
\end{proof}

\begin{corollary}\label{nearandconcateeo}
Let $\alpha,\beta$ be two compositions. Then
$(\alpha\cdot \beta)^c=\alpha^c\odot\beta^c$, $(\alpha\odot \beta)^c=\alpha^c\cdot\beta^c$.
\end{corollary}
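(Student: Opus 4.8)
The plan is to reduce the statement to the descent-set characterizations of concatenation and near concatenation provided by Lemma \ref{nearandconcatedes}, and then to compute the two complements purely at the level of subsets of $[n-1]$.

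If $\alpha=\emptyset$ or $\beta=\emptyset$ both identities are immediate, so I would assume $\alpha,\beta$ nonempty and put $p=|\alpha|$, $q=|\beta|$, so that $m:=|\alpha\cdot\beta|=|\alpha\odot\beta|=p+q$; since taking complements preserves the weight, also $|\alpha^c|=p$ and $|\beta^c|=q$. By Lemma \ref{nearandconcatedes}\eqref{detacdotalbeta} we have $\Set(\alpha\cdot\beta)=\Set(\alpha)\cup\{p\}\cup(p+\Set(\beta))$. The three pieces on the right are contained, respectively, in $[p-1]$, in $\{p\}$, and in $\{p+1,\cdots,m-1\}$, and these three blocks partition $[m-1]$. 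Hence complementation inside $[m-1]$ distributes over the blocks:
\begin{align*}
\Set\left((\alpha\cdot\beta)^c\right)=[m-1]\setminus\Set(\alpha\cdot\beta)=\left([p-1]\setminus\Set(\alpha)\right)\cup\left(\{p+1,\cdots,m-1\}\setminus(p+\Set(\beta))\right).
\end{align*}
The first term equals $\Set(\alpha^c)$, and the second equals $p+\left([q-1]\setminus\Set(\beta)\right)=p+\Set(\beta^c)$. Thus $\Set\left((\alpha\cdot\beta)^c\right)=\Set(\alpha^c)\cup(p+\Set(\beta^c))$, and since $\left|(\alpha\cdot\beta)^c\right|=m=|\alpha^c|+|\beta^c|$, Lemma \ref{nearandconcatedes}\eqref{detaodotalbetb} gives $(\alpha\cdot\beta)^c=\alpha^c\odot\beta^c$.

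For the second identity I would not repeat the argument but instead apply the identity just proved with $\alpha^c$ and $\beta^c$ in place of $\alpha$ and $\beta$, obtaining $(\alpha^c\cdot\beta^c)^c=(\alpha^c)^c\odot(\beta^c)^c=\alpha\odot\beta$; taking complements of both sides and using that complementation is an involution yields $\alpha^c\cdot\beta^c=(\alpha\odot\beta)^c$. Alternatively, the same three-block computation applied to Lemma \ref{nearandconcatedes}\eqref{detaodotalbetb} proves it directly.

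There is essentially no serious obstacle: the only thing to watch is the bookkeeping of the disjoint decomposition $[m-1]=[p-1]\cup\{p\}\cup\{p+1,\cdots,m-1\}$ together with the degenerate subcases where $\Set(\alpha)=\emptyset$ or $\Set(\alpha)=[p-1]$ (and similarly for $\beta$), all of which are absorbed uniformly into the displayed set identities.
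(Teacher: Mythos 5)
Your proposal is correct and follows essentially the same route as the paper: both reduce the statement to the $\Set$-characterizations in Lemma \ref{nearandconcatedes}, compute the complement of $\Set(\alpha\cdot\beta)$ blockwise over the partition $[p-1]\cup\{p\}\cup[p+1,p+q-1]$ of $[p+q-1]$, and then deduce the second identity from the first using that complementation of compositions is an involution. No gaps.
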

\begin{proof}
Let $\alpha\models p$, $\beta\models q$ for some nonnegative integers $p,q$, and let $\delta=\alpha\cdot \beta$.
By Lemma \ref{nearandconcatedes}\eqref{detacdotalbeta},
$\Set(\delta)=\Set(\alpha)\cup\{|\alpha|\} \cup (|\alpha|+\Set(\beta))$, and hence
\begin{align*}
[|\delta|-1]-\Set(\delta)=&[p+q-1]-\left(\Set(\alpha)\cup\{p\} \cup (p+\Set(\beta))\right)\cr
=&\left([p-1]-\Set(\alpha)\right)\cup \left([p+1,p+q-1]-(p+\Set(\beta))\right)\cr
=&\Set(\alpha^c)\cup \left(p+([q-1]-\Set(\beta))\right)\cr
=&\Set(\alpha^c)\cup\left(p+\Set(\beta^c)\right).
\end{align*}
It follows that $\Set(\delta^c)=\Set(\alpha^c)\cup\left(|\alpha^c|+\Set(\beta^c)\right)$, or equivalently,
$\delta^c=\alpha^c\odot\beta^c$ by Lemma \ref{nearandconcatedes}\eqref{detaodotalbetb}.

Note that for any nonnegative integer $n$ and any composition $\gamma\models n$, we have
\begin{align*}
\Set((\gamma^c)^c)=[n-1]-\Set(\gamma^c)=\Set(\gamma).
\end{align*}
It follows that $(\gamma^c)^c=\gamma$. Therefore, $(\alpha\odot \beta)^c=\alpha^c\cdot\beta^c$ is an immediate consequence of
$(\alpha\cdot \beta)^c=\alpha^c\odot\beta^c$.
\end{proof}

\begin{proposition}\label{fundambtwoco}
The map $\Psi$ is the unique nontrivial graded coalgebra automorphism of ${\rm {\Qsym}}$ that takes the fundamental quasisymmetric basis into itself.
\end{proposition}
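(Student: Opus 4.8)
The strategy is to show that any nontrivial graded coalgebra automorphism $\varphi$ of ${\rm QSym}$ preserving the fundamental basis must agree with $\Psi$ in low degrees, and then to propagate this agreement to all degrees by an induction using the coproduct formula \eqref{qsprocop} together with the combinatorial dictionary of Lemma~\ref{nearandconcatedes} and Corollary~\ref{nearandconcateeo}. First I would verify that $\Psi$ really is a graded coalgebra automorphism preserving $\{F_\alpha\}$: from \eqref{iafcom} it preserves the basis, and using \eqref{qsprocop} one checks $(\Psi\otimes\Psi)\Delta(F_\alpha)=\Delta(\Psi(F_\alpha))$ precisely because complementation interchanges concatenation and near-concatenation, i.e.\ $(\alpha\cdot\beta)^c=\alpha^c\odot\beta^c$ and $(\alpha\odot\beta)^c=\alpha^c\cdot\beta^c$ by Corollary~\ref{nearandconcateeo}, so the two sums in $\Delta(F_\alpha)$ get swapped consistently. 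Nontriviality is clear since $F_{(1,1)}\ne F_{(1,1)^c}=F_{(2)}$.

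Next I would pin down $\varphi$ on low degrees. By Lemma~\ref{lemma:identitymapgrdaed}, $\varphi(F_\emptyset)=F_\emptyset$, $\varphi(F_1)=F_1$, and in degree $2$ either $\varphi$ fixes $F_{11},F_2$ or swaps them. Since a coalgebra automorphism fixing degree $\le 2$ would, by the induction below, be the identity, nontriviality of $\varphi$ forces the swap $\varphi(F_{11})=F_2$, $\varphi(F_2)=F_{11}$; that is, $\varphi(F_\alpha)=F_{\alpha^c}$ for $|\alpha|\le 2$. I would then handle degree $3$: applying $\varphi$ to $\Delta(F_{111})=1\otimes F_{111}+F_1\otimes F_{11}+F_{11}\otimes F_1+F_{111}\otimes 1+F_1\otimes F_2+F_2\otimes F_1+F_{12}\otimes 1+1\otimes F_{21}$ (reading off the $\beta\cdot\gamma$ and $\beta\odot\gamma$ splittings) and comparing with $\Delta(\varphi(F_{111}))$ forces $\varphi(F_{111})=F_3=F_{(111)^c}$, and likewise $\varphi(F_3)=F_{111}$; the coproduct of $F_{12}$ and of $F_{21}$ then distinguishes these two elements and forces $\varphi(F_{12})=F_{21}$, $\varphi(F_{21})=F_{12}$. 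So $\varphi(F_\alpha)=F_{\alpha^c}$ for all $|\alpha|\le 3$.

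Then comes the induction: assume $\varphi(F_\alpha)=F_{\alpha^c}$ for all $\alpha$ with $|\alpha|<n$, and let $\alpha\models n$. Write $\varphi(F_\alpha)=F_\gamma$ for some $\gamma\models n$ (using that $\varphi$ preserves the basis and is graded). From $(\varphi\otimes\varphi)\Delta(F_\alpha)=\Delta(F_\gamma)$ and \eqref{qsprocop}, the left side equals $\sum_{\alpha=\beta\cdot\gamma'}F_{\beta^c}\otimes F_{(\gamma')^c}+\sum_{\alpha=\beta\odot\gamma'}F_{\beta^c}\otimes F_{(\gamma')^c}$, which by Corollary~\ref{nearandconcateeo} is exactly $\sum_{\alpha^c=\mu\odot\nu}F_\mu\otimes F_\nu+\sum_{\alpha^c=\mu\cdot\nu}F_\mu\otimes F_\nu=\Delta(F_{\alpha^c})$. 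Hence $\Delta(F_\gamma)=\Delta(F_{\alpha^c})$, and since $\Delta$ is injective on ${\rm QSym}$ (e.g.\ because the coefficient of $F_\emptyset\otimes F_\delta$ already reads off $\delta$) we get $\gamma=\alpha^c$, completing the induction.

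**Main obstacle.** The delicate point is the base case in degree $2$ and $3$: one must argue that the swap, not the identity, occurs, which requires knowing that the identity-on-low-degrees case forces $\varphi=\mathrm{id}$ globally — so the low-degree analysis and the induction are logically intertwined and must be organized carefully (the cleanest route is to run the induction first, concluding that $\varphi$ is determined by its restriction to degrees $\le 2$, and only then invoke nontriviality). A minor technical care is needed to confirm the injectivity of $\Delta$ on each graded piece, but this is immediate from inspecting the $F_\emptyset\otimes(-)$ component of $\Delta(F_\delta)$, which is always $F_\emptyset\otimes F_\delta$ with coefficient $1$.
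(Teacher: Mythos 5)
Your overall architecture (verify $\Psi$ is a coalgebra map via Corollary~\ref{nearandconcateeo}, pin down degree $\le 2$, then induct) matches the paper, but the induction step has a genuine gap. With $\varphi(F_\alpha)=F_\gamma$ and the inductive hypothesis only covering weights $<n$, the left-hand side $(\varphi\otimes\varphi)\Delta(F_\alpha)$ is \emph{not} $\sum_{\alpha=\beta\cdot\gamma'}F_{\beta^c}\otimes F_{(\gamma')^c}+\sum_{\alpha=\beta\odot\gamma'}F_{\beta^c}\otimes F_{(\gamma')^c}$: the extreme terms are $1\otimes F_\gamma$ and $F_\gamma\otimes 1$, not $1\otimes F_{\alpha^c}$ and $F_{\alpha^c}\otimes 1$, since the hypothesis does not apply to $\alpha$ itself. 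Writing them as complements is exactly the conclusion you are trying to reach. What you actually obtain is only that the \emph{reduced} coproducts (the summands with both tensor factors in positive degree) of $F_\gamma$ and $F_{\alpha^c}$ agree. Your appeal to injectivity of $\Delta$ via the $F_\emptyset\otimes(-)$ component is vacuous here: that component of $\Delta(F_\gamma)$ is $1\otimes F_\gamma$ and of the left-hand side is $1\otimes\varphi(F_\alpha)=1\otimes F_\gamma$, so it matches automatically and gives no information.

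The missing step is to prove that for $n\ge 3$ the reduced coproduct determines the composition, i.e.\ that equality of the sets of nontrivial concatenation/near-concatenation splittings of two compositions of weight $n\ge3$ forces the compositions to be equal. This is false for $n=2$ (both $F_{11}$ and $F_2$ have reduced coproduct $F_1\otimes F_1$, which is precisely why the degree-$2$ swap exists), so it genuinely requires an argument; it is the combinatorial heart of the paper's proof, carried out there by partitioning the splittings into the four sets $D_1,D_2,E_1,E_2$, using the length identities $\ell(\alpha)=\ell(\beta)+\ell(\gamma)$ versus $\ell(\beta)+\ell(\gamma)-1$ to force $D_1=E_2$ and $D_2=E_1$, and deriving a contradiction. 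As a secondary point, your displayed coproduct of $F_{111}$ is wrong: since no part of $(1,1,1)$ can be written as a sum of two positive integers, there are no near-concatenation splittings, so $\Delta(F_{111})=1\otimes F_{111}+F_1\otimes F_{11}+F_{11}\otimes F_1+F_{111}\otimes 1$; the extra terms $F_1\otimes F_2$, $F_2\otimes F_1$, $F_{12}\otimes 1$, $1\otimes F_{21}$ do not occur. The degree-$3$ base case is in any event subsumed by a correctly executed induction from degree $2$, but as written neither the base case nor the inductive step is sound.
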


\begin{proof}
By Eq.\eqref{qsprocop} and  Corollary \ref{nearandconcateeo}, for any composition $\alpha$, we have
\begin{align*}
(\Psi\otimes \Psi)\Delta(F_\alpha)
=&\sum_{\alpha=\beta\cdot\gamma}F_{\beta^c}\otimes F_{\gamma^c}+\sum_{\alpha=\beta\odot\gamma}F_{\beta^c}\otimes F_{\gamma^c}\cr
=&\sum_{\alpha^c=\beta^c\odot\gamma^c}F_{\beta^c}\otimes F_{\gamma^c}+\sum_{\alpha^c=\beta^c\cdot\gamma^c}F_{\beta^c}\otimes F_{\gamma^c}\cr
=&\Delta(\Psi(F_{\alpha})).
\end{align*}
Also, from Eq.\eqref{counitqs} one can easily check that $\varepsilon\Psi(F_\alpha)=\varepsilon(F_\alpha)$.
Thus, the map $\Psi$ is a coalgebra automorphism of ${\rm {\Qsym}}$.

Conversely, let $\psi$ be a  nontrivial graded coalgebra automorphism of ${\rm {\Qsym}}$ that preserves the fundamental quasisymmetric basis. We next show that
$\psi=\Psi$.

By Lemma \ref{lemma:identitymapgrdaed},  $\psi(F_1)=F_1$ and
 there are two possibilities for degree $2$, i.e.,

Case (1). $\psi(F_{11})=F_{11}$, $\psi(F_{2})=F_{2}$, and

Case (2). $\psi(F_{11})=F_{2}$, $\psi(F_{2})=F_{11}$.

Take an arbitrary composition $\alpha$ and  let $\psi(F_{\alpha})=F_{\delta}$.
We prove by induction on $|\alpha|$ that $\delta=\alpha$ if Case (1) happens, while
$\delta=\alpha^c$ if Case (2) happens.
The initial cases of $|\alpha|\leq2$ have been established.  Now suppose that the desired results hold for all $\alpha$ with $|\alpha|<n$ where $n\geq3$ and consider the case $|\alpha|=n$.
Then, by Eq.\eqref{qsprocop},
\begin{align}\label{phiph9idftoal}
(\psi\otimes\psi)\Delta(F_{\alpha})=&1\otimes F_{\delta}
+\sum_{{\alpha=\beta\cdot\gamma,}\atop{\beta,\gamma\neq\emptyset}}\psi(F_{\beta})\otimes \psi(F_{\gamma})
+\sum_{{\alpha=\beta\odot\gamma,}\atop{\beta,\gamma\neq\emptyset}}\psi(F_{\beta})\otimes \psi(F_{\gamma})+F_{\delta}\otimes 1.
\end{align}
Clearly, we have $|\beta|<|\alpha|=n$ and $|\gamma|<|\alpha|=n$ if $\alpha=\beta\cdot\gamma$ or $\alpha=\beta\odot\gamma$ with $\beta,\gamma\neq\emptyset$.

We first take care of Case (1). Assume that $\alpha\neq\delta$. By hypothesis, $\psi(F_\beta)=F_{\beta}$ for all compositions $\beta$ with $|\beta|<n$, so
\begin{align*}
(\psi\otimes\psi)\Delta(F_{\alpha})=1\otimes F_{\delta}
+\sum_{{\alpha=\beta\cdot\gamma,}\atop{\beta,\gamma\neq\emptyset}}F_{\beta}\otimes F_{\gamma}
+\sum_{{\alpha=\beta\odot\gamma,}\atop{\beta,\gamma\neq\emptyset}}F_{\beta}\otimes F_{\gamma}+F_{\delta}\otimes 1.
\end{align*}
Since $\psi$ is a coalgebra homomorphism, we have $(\psi\otimes\psi)\Delta(F_{\alpha})=\Delta(\psi(F_{\alpha}))=\Delta(F_{\delta})$, which together with Eq.\eqref{qsprocop} yields that
\begin{align}\label{deltasumntri1}
\sum_{{\alpha=\beta\cdot\gamma,}\atop{\beta,\gamma\neq\emptyset}}F_{\beta}\otimes F_{\gamma}
+\sum_{{\alpha=\beta\odot\gamma,}\atop{\beta,\gamma\neq\emptyset}}F_{\beta}\otimes F_{\gamma}
=\sum_{{\delta=\tau\cdot\sigma,}\atop{\tau,\sigma\neq\emptyset}}F_{\tau}\otimes F_{\sigma}
+\sum_{{\delta=\tau\odot\sigma,}\atop{\tau,\sigma\neq\emptyset}}F_{\tau}\otimes F_{\sigma}.
\end{align}
Denote
\begin{align*}
D_1=&\{F_{\beta}\otimes F_{\gamma}|\alpha=\beta\cdot\gamma\ {\rm and}\ \beta,\gamma\neq\emptyset\},
&D_2=&\{F_{\beta}\otimes F_{\gamma}|\alpha=\beta\odot\gamma\ {\rm and}\ \beta,\gamma\neq\emptyset\},\cr
E_1=&\{F_{\tau}\otimes F_{\sigma}|\delta=\tau\cdot\sigma\ {\rm and}\ \tau,\sigma\neq\emptyset\},
&E_2=&\{F_{\tau}\otimes F_{\sigma}|\delta=\tau\odot\sigma\ {\rm and}\ \tau,\sigma\neq\emptyset\}.
\end{align*}
By the definitions of the concatenation and near concatenation, $\alpha=\beta\cdot\gamma$ implies $\ell(\alpha)=\ell(\beta)+\ell(\gamma)$,
while $\alpha=\beta\odot\gamma$ implies $\ell(\alpha)=\ell(\beta)+\ell(\gamma)-1$, and hence
$D_1\cap D_2=\emptyset$ and $E_1\cap E_2=\emptyset$. Then, by Eq.\eqref{deltasumntri1}, we have the disjoint union $D_1\cup D_2=E_1\cup E_2$.
It follows from $\alpha\neq\delta$ that
$D_1\cap E_1=\emptyset$ and $D_2\cap E_2=\emptyset$, whence $D_1= E_2$ and $D_2= E_1$.

Note that $D_1= E_2\neq \emptyset$ implies that $\ell(\alpha)=\ell(\delta)+1$, and $D_2= E_1\neq \emptyset$ implies that $\ell(\alpha)=\ell(\delta)-1$,
so at least one of $D_1= E_2= \emptyset$ and $D_2= E_1=\emptyset$ holds. Without loss of generality, assume that $D_1= E_2= \emptyset$.
Now $D_1= \emptyset$ yields that $\alpha=(n)$, and $E_2= \emptyset$ yields that $\delta=(1^n)$. Since $n\geq3$, we have $\alpha=1\odot(n-1)$ and hence
$F_1\otimes F_{n-1}\in D_2=E_1$ so that $\delta=1\cdot(n-1)=(1,n-1)$, contradicting $n\geq3$.
Therefore, $\alpha=\delta$  proving that $\psi$ is the identity map.

Let us consider Case (2). By hypothesis, $\psi(F_{\beta})=F_{\beta^c}$ for all compositions $\beta$ with $|\beta|<n$, $n\geq3$.
Thus, $(\psi\otimes\psi)\Delta(F_{\alpha})=\Delta(\psi(F_{\alpha}))=\Delta(F_{\delta})$, together with Eq.\eqref{phiph9idftoal}, implies that
\begin{align}\label{deltasumnphiuntri}
\sum_{{\alpha=\beta\cdot\gamma,}\atop{\beta,\gamma\neq\emptyset}}F_{\beta^c}\otimes F_{\gamma^c}
+\sum_{{\alpha=\beta\odot\gamma,}\atop{\beta,\gamma\neq\emptyset}}F_{\beta^c}\otimes F_{\gamma^c}
=\sum_{{\delta=\tau\cdot\sigma,}\atop{\tau,\sigma\neq\emptyset}}F_{\tau}\otimes F_{\sigma}
+\sum_{{\delta=\tau\odot\sigma,}\atop{\tau,\sigma\neq\emptyset}}F_{\tau}\otimes F_{\sigma}.
\end{align}
By Corollary \ref{nearandconcateeo}, the left-hand side of Eq.\eqref{deltasumnphiuntri} is equal to
\begin{align}\label{deltasumnphiuntri2}
\sum_{{\alpha^c=\beta^c\odot\gamma^c,}\atop{\beta,\gamma\neq\emptyset}}F_{\beta^c}\otimes F_{\gamma^c}
+\sum_{{\alpha^c=\beta^c\cdot\gamma^c,}\atop{\beta,\gamma\neq\emptyset}}F_{\beta^c}\otimes F_{\gamma^c}.
\end{align}
Let $\zeta=\beta^c$ and $\eta=\gamma^c$. Then combining \eqref{deltasumnphiuntri} and \eqref{deltasumnphiuntri2} we obtain that
\begin{align*}
\sum_{{\alpha^c=\zeta\odot\eta,}\atop{\zeta,\eta\neq\emptyset}}F_{\zeta}\otimes F_{\eta}
+\sum_{{\alpha^c=\zeta\cdot\eta,}\atop{\zeta,\eta\neq\emptyset}}F_{\zeta}\otimes F_{\eta}
=\sum_{{\delta=\tau\cdot\sigma,}\atop{\tau,\sigma\neq\emptyset}}F_{\tau}\otimes F_{\sigma}
+\sum_{{\delta=\tau\odot\sigma,}\atop{\tau,\sigma\neq\emptyset}}F_{\tau}\otimes F_{\sigma},
\end{align*}
which is a formula analogous to Eq.\eqref{deltasumntri1}.
Thus in analogy to the proof of Case (1) we obtain that $\delta=\alpha^c$. Therefore, $\psi(F_\alpha)=F_{\alpha^c}$ for all
compositions $\alpha$, and hence $\psi=\Psi$. This completes the proof.
\end{proof}

\begin{theorem}\label{fundambtwohopfalg}
The map $\Psi$ is the only nontrivial graded Hopf algebra automorphism of ${\rm {\Qsym}}$ that preserves the fundamental quasisymmetric basis.
\end{theorem}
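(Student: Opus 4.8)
The plan is to deduce the theorem directly from Propositions~\ref{prop:fundambalg} and~\ref{fundambtwoco}, since all the substantial work has already been carried out there; the present statement is essentially a two-step combination of them.

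First I would check that $\Psi$ really is a nontrivial graded Hopf algebra automorphism preserving the fundamental basis. By Proposition~\ref{prop:fundambalg}, $\Psi$ is a graded algebra automorphism of ${\rm {\Qsym}}$ taking the fundamental basis into itself, and by Proposition~\ref{fundambtwoco} it is also a graded coalgebra automorphism of ${\rm {\Qsym}}$; hence $\Psi$ is a graded bialgebra automorphism. Since ${\rm {\Qsym}}$ is a connected graded bialgebra, the remark at the close of Section~\ref{sec:background} shows that any bialgebra endomorphism is automatically a Hopf endomorphism, so $\Psi$ is a graded Hopf algebra automorphism. It is nontrivial because, for instance, $\Psi(F_{11})=F_{(11)^c}=F_{2}\neq F_{11}$.

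For the uniqueness direction, I would take an arbitrary nontrivial graded Hopf algebra automorphism $\varphi$ of ${\rm {\Qsym}}$ preserving the fundamental basis. Then $\varphi$ is in particular a nontrivial graded coalgebra automorphism preserving the fundamental basis, so Proposition~\ref{fundambtwoco} applies verbatim and yields $\varphi=\Psi$. (Alternatively one can route through Proposition~\ref{prop:fundambalg}: a Hopf algebra automorphism is an algebra automorphism, so $\varphi\in\{\Psi,\rho,\omega\}$, and then one rules out $\rho$ and $\omega$ by exhibiting a composition on which they fail to commute with $\Delta$ — for example $(\rho\otimes\rho)\Delta(F_{12})$ contains the term $F_{1}\otimes F_{2}$ whereas $\Delta(\rho(F_{12}))=\Delta(F_{21})$ does not, and $\omega$ is handled the same way — so only $\Psi$ survives.)

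I do not expect a genuine obstacle here. The only points meriting explicit mention are that a Hopf algebra automorphism is in particular a coalgebra automorphism, so that Proposition~\ref{fundambtwoco} applies directly, and that, conversely, being a bialgebra automorphism of the connected graded bialgebra ${\rm {\Qsym}}$ already suffices to be a Hopf algebra automorphism, which is what lets $\Psi$ qualify. With these two observations in place the theorem follows immediately.
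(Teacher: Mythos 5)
Your proposal is correct and follows essentially the same route as the paper, which likewise deduces the theorem by combining Propositions~\ref{prop:fundambalg} and~\ref{fundambtwoco} (the paper's proof is just a one-line version of your argument). Your extra care in verifying the existence direction --- that $\Psi$ is a bialgebra automorphism and hence, by connectedness of the grading, a Hopf automorphism --- and your explicit check that $\rho$ and $\omega$ fail to commute with $\Delta$ are both sound but not a different method.
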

\begin{proof}
By Propositions \ref{prop:fundambalg} and \ref{fundambtwoco}, $\Psi$ is the only nontrivial graded algebra and coalgebra automorphism of ${\rm {\Qsym}}$ that preserves the fundamental quasisymmetric basis, which establishes the desired statement.
\end{proof}


\section{Rigidity with respect to the quasisymmetric Schur basis}\label{sec:rwrttqsf}

Now we prove that ${\rm {\Qsym}}$ is rigid respectively as a graded algebra and coalgebra with respect to the quasisymmetric Schur basis.
Before turning to the main results, we first  prove two special Pieri-type formulas for quasisymmetric Schur functions.
\begin{lemma}\label{lem:s2progucts12121}
Let $i_1,\cdots,i_k, j_1,\cdots,j_k$ be nonnegative integers. Then
\begin{enumerate}
\item\label{eq:items2s1i12j112exp}
${\mathcal{S}}_{2}{\mathcal{S}}_{(1^{i_1},2^{j_1},\cdots,1^{i_k},2^{j_k},1,2)}=\sum_{\beta}{\mathcal{S}}_{\beta}$ contains $3(j_1+\cdots+j_k)+4$ terms;
\item\label{eq:items2s1i12j121exp}
${\mathcal{S}}_{2}{\mathcal{S}}_{(1^{i_1},2^{j_1},\cdots,1^{i_k},2^{j_k+1},1)}=\sum_{\beta}{\mathcal{S}}_{\beta}$ contains $3(j_1+\cdots+j_k)+6$ terms.
\end{enumerate}
\end{lemma}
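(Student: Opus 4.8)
The plan is to read both products off from part~\eqref{eq:itemPieriruleqsfsnsal} of the Pieri rule (Lemma~\ref{PieriruqSchurf}) taken with $n=2$. Denote by $\alpha$ the composition occurring as the second factor; in both (1) and (2) every part of $\alpha$ equals $1$ or $2$, and a count of parts gives $\widetilde{\alpha}=(2^{p},1^{q})$ with $p=j_{1}+\cdots+j_{k}+1\geq 1$ and $q=i_{1}+\cdots+i_{k}+1\geq 1$. I would first verify, directly from the definition of a horizontal strip, that the partitions $\lambda$ for which $\lambda/\widetilde{\alpha}$ is a horizontal strip of size $2$ are precisely
\[
\lambda^{(1)}=(4,2^{p-1},1^{q}),\qquad\lambda^{(2)}=(3,2^{p},1^{q-1}),\qquad\lambda^{(3)}=(3,2^{p-1},1^{q+1}),\qquad\lambda^{(4)}=(2^{p+1},1^{q}),
\]
the columns of the corresponding skew shapes being $S(\delta^{(1)})=\{3,4\}$, $S(\delta^{(2)})=\{2,3\}$, $S(\delta^{(3)})=\{1,3\}$ and $S(\delta^{(4)})=\{1,2\}$. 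By the Pieri rule the number of terms then equals $N_{1}+N_{2}+N_{3}+N_{4}$, where $N_{m}$ is the number of compositions $\beta$ with $\widetilde{\beta}=\lambda^{(m)}$ and $\text{row}_{S(\delta^{(m)})}(\beta)=\alpha$; conditions (a)--(b) of the Pieri rule hold automatically for such $\beta$, and compositions with different underlying partitions are automatically distinct, so everything reduces to computing the four numbers $N_{m}$.

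To evaluate each $N_{m}$ I would unwind the two-step operator $\text{row}_{\{s_{1},s_{2}\}}=\text{rem}_{s_{1}}\circ\text{rem}_{s_{2}}$, using that each $\lambda^{(m)}$ has a unique part exceeding $2$. One finds: $\beta$ is valid for $\lambda^{(1)}$ exactly when it is $\alpha$ with one of its $p$ parts equal to $2$ replaced by a $4$, so $N_{1}=p$; $\beta$ is valid for $\lambda^{(2)}$ exactly when it is obtained from an intermediate composition $\beta''$ by replacing one of the $p+1$ parts equal to $2$ of $\beta''$ by a $3$, where $\beta''$ arises from $\alpha$ by replacing, by a $2$, a part equal to $1$ of $\alpha$ that lies to the right of every part equal to $2$ of $\alpha$; and for $\lambda^{(3)}$ and $\lambda^{(4)}$ one draws on the common family of intermediate compositions $\beta''$ with $\widetilde{\beta''}=(2^{p},1^{q+1})$ and $\text{rem}_{1}(\beta'')=\alpha$ — equivalently, $\beta''$ is $\alpha$ with one extra $1$ inserted so as to become the last $1$ of $\beta''$ — from which $\beta$ is recovered either by replacing one of the $p$ parts equal to $2$ of $\beta''$ by a $3$ (any of them, for $\lambda^{(3)}$) or by replacing, by a $2$, a part equal to $1$ of $\beta''$ lying after the last part equal to $2$ of $\beta''$ (for $\lambda^{(4)}$). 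Each of these constructions is injective ($\beta$ recovers $\beta''$ via the appropriate $\text{rem}$ and then the site of the change), so the compositions produced are pairwise distinct.

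The single place where the two families genuinely differ — hence the crux of the computation — is this last bit of bookkeeping, which turns on the fact that the last two parts of $\alpha$ are $1,2$ in part (1) but $2,1$ in part (2). In part (1) no part of $\alpha$ equal to $1$ lies to the right of all its $2$'s, there are exactly two intermediate compositions $\beta''$ (the extra $1$ inserted just before the final $2$, or at the very end), and these together admit exactly one completion for $\lambda^{(4)}$; hence $N_{1}=p$, $N_{2}=0$, $N_{3}=2p$, $N_{4}=1$ and
\[
N_{1}+N_{2}+N_{3}+N_{4}=3p+1=3(j_{1}+\cdots+j_{k})+4 .
\]
In part (2) exactly one part equal to $1$ (the final one) lies to the right of all $2$'s, there is exactly one intermediate composition $\beta''$, and it admits two completions for $\lambda^{(4)}$; hence $N_{1}=p$, $N_{2}=p+1$, $N_{3}=p$, $N_{4}=2$ and
\[
N_{1}+N_{2}+N_{3}+N_{4}=3p+3=3(j_{1}+\cdots+j_{k})+6 .
\]
I would close by recording the case $k=0$ as a consistency check: there $\alpha=(1,2)$ produces the four terms of ${\mathcal{S}}_{2}{\mathcal{S}}_{12}$, and $\alpha=(2,1)$ produces the six terms of ${\mathcal{S}}_{2}{\mathcal{S}}_{21}$.
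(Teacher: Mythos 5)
Your proof is correct and follows essentially the same route as the paper: both apply the Pieri rule for $\mathcal{S}_2\mathcal{S}_\alpha$, identify the same four possible underlying partitions $\widetilde{\beta}$ (your $\lambda^{(1)},\dots,\lambda^{(4)}$ are the paper's Cases 4, 3, 2, 1), and count the compositions $\beta$ with $\mathrm{row}_{S(\delta)}(\beta)=\alpha$ in each case, arriving at the same tallies $p+0+2p+1$ and $p+(p+1)+p+2$. Your unwinding of $\mathrm{row}_{\{s_1,s_2\}}$ through explicit intermediate compositions $\beta''$ is a slightly more systematic bookkeeping of what the paper does in prose, but it is the same argument.
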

\begin{proof}
\eqref{eq:items2s1i12j112exp}
Let $\alpha=(1^{i_1},2^{j_1},\cdots,1^{i_k},2^{j_k},1,2)$. Then $\widetilde{\alpha}=(2^{j_1+\cdots+j_k+1},1^{i_1+\cdots+i_k+1})$. Suppose that ${\mathcal{S}}_{\beta}$ is a term of the product ${\mathcal{S}}_{2}{\mathcal{S}}_{\alpha}$. Then, by Lemma \ref{PieriruqSchurf}\eqref{eq:itemPieriruleqsfsnsal}, $\delta=\widetilde{\beta}/\widetilde{\alpha}$ is a horizontal strip of size $2$.
There are four cases  for $\beta$:

{\emph{Case 1.}} $\widetilde{\beta}=(2^{j_1+\cdots+j_k+2},1^{i_1+\cdots+i_k+1})$. Then $S(\delta)=\{1,2\}$. It follows from $\text{row}_{S(\delta)}(\beta)=\alpha$ that
$\beta=(1^{i_1},2^{j_1},\cdots,1^{i_k},2^{j_k},1,2,2)$.

{\emph{Case 2.}} $\widetilde{\beta}=(3,2^{j_1+\cdots+j_k},1^{i_1+\cdots+i_k+2})$. Then  $S(\delta)=\{1,3\}$ and hence
$\beta$ is obtained from $\alpha$ by adding a part of size $1$ near the last part of size $2$ and then replacing a part of size $2$ with a part of size $3$. It will contribute $2(j_1+\cdots+j_k+1)$ terms to ${\mathcal{S}}_{2}{\mathcal{S}}_{(1^{i_1},2^{j_1},\cdots,1^{i_k},2^{j_k},1,2)}$.

{\emph{Case 3.}} $\widetilde{\beta}=(3,2^{j_1+\cdots+j_k+1},1^{i_1+\cdots+i_k})$.  Then $S(\delta)=\{2,3\}$ and hence
 there is no $\beta$ such that $\text{row}_{S(\delta)}(\beta)=\alpha$. Otherwise, the size of each part of such a $\beta$ is less than $4$, so the size of the last part
 of $\text{row}_{S(\delta)}(\beta)$ must be $1$, that is,  $\text{row}_{S(\delta)}(\beta)\neq\alpha$, a contradiction.
Hence no such a composition $\beta$ exists.

{\emph{Case 4.}} $\widetilde{\beta}=(4,2^{j_1+\cdots+j_k},1^{i_1+\cdots+i_k+1})$. Then $S(\delta)=\{3,4\}$ and hence
$\text{row}_{S(\delta)}(\beta)=\alpha$ yields that $\beta$ is obtained from
$\alpha$ by replacing a part of size $2$ with a part of size $4$. It will contribute $j_1+\cdots+j_k+1$ terms to ${\mathcal{S}}_{2}{\mathcal{S}}_{(1^{i_1},2^{j_1},\cdots,1^{i_k},2^{j_k},1,2)}$.
This proves part \eqref{eq:items2s1i12j112exp}.

\eqref{eq:items2s1i12j121exp} This part can be proved similarly. Let $\sigma=(1^{i_1},2^{j_1},\cdots,1^{i_k},2^{j_k+1},1)$ and
 suppose that ${\mathcal{S}}_{\beta}$ is a term of the product ${\mathcal{S}}_{2}{\mathcal{S}}_{\sigma}$.
 Then $\widetilde{\sigma}=(2^{j_1+\cdots+j_k+1},1^{i_1+\cdots+i_k+1})$ and
 $\delta=\widetilde{\beta}/\widetilde{\sigma}$ is a horizontal strip of size $2$, so there are also four possibilities for $\beta$:

{\emph{Case 1.}} $\widetilde{\beta}=(2^{j_1+\cdots+j_k+2},1^{i_1+\cdots+i_k+1})$. Then $S(\delta)=\{1,2\}$ and hence
$$\beta=(1^{i_1},2^{j_1},\cdots,1^{i_k},2^{j_k+1},1,2),\ {\rm or}\ \beta=(1^{i_1},2^{j_1},\cdots,1^{i_k},2^{j_k+2},1).$$

{\emph{Case 2.}} $\widetilde{\beta}=(3,2^{j_1+\cdots+j_k},1^{i_1+\cdots+i_k+2})$. Then  $S(\delta)=\{1,3\}$ and hence
$\beta$ is obtained from
$\sigma$ by replacing a part of size $2$ with a part of size $3$ and suffixing a part of size $1$.
There are $j_1+\cdots+j_k+1$ possible choices for $\beta$.

{\emph{Case 3.}} $\widetilde{\beta}=(3,2^{j_1+\cdots+j_k+1},1^{i_1+\cdots+i_k})$.  Then $S(\delta)=\{2,3\}$ and hence
$\beta$ is obtained from $\sigma$ either by  replacing the last part of size $1$ with a part of size $3$, or by replacing a part of size $2$ with a part of size $3$ and replacing the last part of size $1$ with a part of size $2$. So
there are $j_1+\cdots+j_k+2$ possible choices for $\beta$.

{\emph{Case 4.}} $\widetilde{\beta}=(4,2^{j_1+\cdots+j_k},1^{i_1+\cdots+i_k+1})$. Then $S(\delta)=\{3,4\}$ and it is easy to see that
$\beta$ is obtained from
$\sigma$ by replacing a part of size $2$ with a part of size $4$. This contributes $j_1+\cdots+j_k+1$ terms to ${\mathcal{S}}_{2}{\mathcal{S}}_{(1^{i_1},2^{j_1},\cdots,1^{i_k},2^{j_k+1},1)}$.

Therefore, ${\mathcal{S}}_{2}{\mathcal{S}}_{(1^{i_1},2^{j_1},\cdots,1^{i_k},2^{j_k+1},1)}=\sum_{\beta}{\mathcal{S}}_{\beta}$ contains $3(j_1+\cdots+j_k)+6$ terms.
\end{proof}

As an example for Lemma \ref{lem:s2progucts12121}, for any positive integer $i$ we have
\begin{align*}
{\mathcal{S}}_{2}{\mathcal{S}}_{(1^i,2)}=&{\mathcal{S}}_{(1^i,4)}+{\mathcal{S}}_{(1^{i+1},3)}+{\mathcal{S}}_{(1^{i},3,1)}
+{\mathcal{S}}_{(1^{i},2,2)},\\
{\mathcal{S}}_{2}{\mathcal{S}}_{(1^{i},2,1)}=&{\mathcal{S}}_{(1^{i},4,1)}+{\mathcal{S}}_{(1^{i},3,1,1)}+{\mathcal{S}}_{(1^{i},3,2)}
+{\mathcal{S}}_{(1^{i},2,3)}+{\mathcal{S}}_{(1^{i},2,2,1)}
+{\mathcal{S}}_{(1^{i},2,1,2)}.
\end{align*}

\begin{proposition}\label{thm:alhomqsfb}
The identity map is the only graded algebra automorphism of ${\rm {\Qsym}}$ that preserves the quasisymmetric Schur basis.
\end{proposition}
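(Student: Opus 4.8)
The plan is to induct on the weight $|\alpha|$, using the Pieri rule \eqref{qsymschurbass1salp} to transport the covering relation $\prec_Q$ through $\varphi$, and then to break the residual ambiguity with a term count from Lemma \ref{lem:s2progucts12121}. Let $\varphi$ be a graded algebra automorphism of ${\rm {\Qsym}}$ preserving the quasisymmetric Schur basis. First I would settle the low degrees. By Lemma \ref{lemma:identitymapgrdaed}, $\varphi(\mathcal{S}_\emptyset)=\mathcal{S}_\emptyset$, $\varphi(\mathcal{S}_1)=\mathcal{S}_1$, and in degree $2$ either $\varphi$ fixes $\mathcal{S}_{11},\mathcal{S}_2$ or it interchanges them. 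Since $\varphi$ permutes the basis, it carries a multiplicity-free sum of basis elements to another such sum with the same number of terms; as the Pieri rules make the relevant products multiplicity-free, Example \ref{lem:s11s11proguctss2s2} shows that $\mathcal{S}_{11}\mathcal{S}_{11}=\varphi(\mathcal{S}_{11})^2$ has $5$ terms while $\mathcal{S}_2\mathcal{S}_2$ has only $4$, so the interchange is impossible. Hence $\varphi$ is the identity on ${\rm {\Qsym}}_0\oplus{\rm {\Qsym}}_1\oplus{\rm {\Qsym}}_2$; in particular $\varphi(\mathcal{S}_2)=\mathcal{S}_2$.

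Next I would run the induction. Suppose $\varphi(\mathcal{S}_\sigma)=\mathcal{S}_\sigma$ for all $\sigma$ with $|\sigma|\le n-1$, where $n\ge 3$, fix $\alpha\models n$, and write $\varphi(\mathcal{S}_\alpha)=\mathcal{S}_\delta$ with $\delta\models n$. For each $\sigma\models n-1$ we have $\varphi(\mathcal{S}_1\mathcal{S}_\sigma)=\varphi(\mathcal{S}_1)\varphi(\mathcal{S}_\sigma)=\mathcal{S}_1\mathcal{S}_\sigma$, so $\varphi$ permutes the support of $\mathcal{S}_1\mathcal{S}_\sigma$, which by \eqref{qsymschurbass1salp} equals $\{\mathcal{S}_\beta\mid\sigma\prec_Q\beta\}$. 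Therefore $\sigma\prec_Q\alpha$ holds if and only if $\mathcal{S}_\alpha$ lies in this support, if and only if $\mathcal{S}_\delta=\varphi(\mathcal{S}_\alpha)$ lies in it, if and only if $\sigma\prec_Q\delta$. Ranging over all $\sigma\models n-1$ — which are exactly the compositions that can be covered by $\alpha$ or by $\delta$ — this gives $D_{\leq_Q}(\alpha)=D_{\leq_Q}(\delta)$. Applying Proposition \ref{lem:aln=beccoversetleqq} (valid since $|\alpha|=n>2$), either $\alpha=\delta$, and the inductive step is done, or $\{\alpha,\delta\}=\{(1^{i_1},2^{j_1},\cdots,1^{i_k},2^{j_k},1,2),(1^{i_1},2^{j_1},\cdots,1^{i_k},2^{j_k+1},1)\}$ for some nonnegative integers $i_1,\dots,i_k,j_1,\dots,j_k$.

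It remains to rule out this exceptional pair. Since $\varphi(\mathcal{S}_2)=\mathcal{S}_2$, we get $\varphi(\mathcal{S}_2\mathcal{S}_\alpha)=\mathcal{S}_2\mathcal{S}_\delta$, and once more $\varphi$ preserves the number of terms in a multiplicity-free basis expansion. But by Lemma \ref{lem:s2progucts12121}, writing $J=j_1+\cdots+j_k$, one of $\mathcal{S}_2\mathcal{S}_\alpha$ and $\mathcal{S}_2\mathcal{S}_\delta$ has $3J+4$ terms and the other has $3J+6$, a contradiction. Hence $\alpha=\delta$, the induction goes through, and $\varphi=\mathrm{id}$.

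I do not expect a deep obstacle, since Proposition \ref{lem:aln=beccoversetleqq} and Lemma \ref{lem:s2progucts12121} are tailored for precisely this situation; the step requiring the most care is the inductive core, where one must (i) observe that an algebra automorphism fixing $\mathcal{S}_1$ and all lower-degree Schur basis elements permutes the support of every product $\mathcal{S}_1\mathcal{S}_\sigma$, so that the down-sets $D_{\leq_Q}(\alpha)$ and $D_{\leq_Q}(\delta)$ in $(\mathcal{Q}_C,\leq_Q)$ coincide, and (ii) use the multiplicity-freeness furnished by the Pieri rules so that ``number of terms in a product'' is a genuine invariant under $\varphi$.
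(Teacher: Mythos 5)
Your proposal is correct and follows essentially the same route as the paper's proof: the same low-degree analysis via Lemma \ref{lemma:identitymapgrdaed} and Example \ref{lem:s11s11proguctss2s2}, the same induction deriving $D_{\leq_Q}(\alpha)=D_{\leq_Q}(\delta)$ from the Pieri rule \eqref{qsymschurbass1salp}, the same appeal to Proposition \ref{lem:aln=beccoversetleqq}, and the same term-count contradiction from Lemma \ref{lem:s2progucts12121}. Your phrasing in terms of $\varphi$ permuting the support of a multiplicity-free product is just a restatement of the paper's coefficient comparison, so there is nothing substantive to distinguish the two arguments.
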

\begin{proof}
Let $\varphi$ be a graded algebra automorphism of ${\rm {\Qsym}}$ that preserves the quasisymmetric Schur basis.
We shall show that $\varphi({\mathcal{S}}_{\alpha})={\mathcal{S}}_{\alpha}$ for all compositions $\alpha$ by induction on the weight $|\alpha|$.
By Lemma \ref{lemma:identitymapgrdaed}, we have $\varphi({\mathcal{S}}_{1})={\mathcal{S}}_{1}$.
For the component of degree $2$, we claim that $\varphi$ is also the identity map. Otherwise, Lemma \ref{lemma:identitymapgrdaed} guarantees that
$\varphi({\mathcal{S}}_{11})={\mathcal{S}}_{2}$ and $\varphi({\mathcal{S}}_{2})={\mathcal{S}}_{11}$,
so that $\varphi({\mathcal{S}}_{11}{\mathcal{S}}_{11})={\mathcal{S}}_{2}{\mathcal{S}}_{2}$, which means that ${\mathcal{S}}_{11}{\mathcal{S}}_{11}$ and ${\mathcal{S}}_{2}{\mathcal{S}}_{2}$ have the same number of terms when they are written as a linear combination of quasisymmetric   Schur functions, contradicting Example \ref{lem:s11s11proguctss2s2}.

Now assume that $|\alpha|\geq3$, and that $\varphi$ is the identity map on the components of degree $|\alpha|-1$.
Let $\beta$ be the composition such that $\varphi({\mathcal{S}}_\alpha)={\mathcal{S}}_\beta$. Then $|\alpha|=|\beta|$.
By the induction hypothesis, we have
\begin{align}\label{eq:qsyalbeprodexpqsy}
\langle{\mathcal{S}}_\beta,{\mathcal{S}}_{1}{\mathcal{S}}_\sigma\rangle_{\mathcal{S}}
=\langle\varphi({\mathcal{S}}_\alpha),\varphi({\mathcal{S}}_1)\varphi({\mathcal{S}}_\sigma)\rangle_{\mathcal{S}}
=\langle {\mathcal{S}}_\alpha,{\mathcal{S}}_1 {\mathcal{S}}_\sigma\rangle_{\mathcal{S}}
\end{align}
for all compositions $\sigma$ with $|\sigma|=|\alpha|-1$.
By Eq.\eqref{qsymschurbass1salp}, we have
$D_{\leq_Q}(\alpha)=D_{\leq_Q}(\beta)$. If $\alpha\neq\beta$, then, by Proposition \ref{lem:aln=beccoversetleqq}, we may assume without loss of generality that
there exist nonnegative integers $i_1,\cdots,i_k, j_1,\cdots,j_k$ where $k\geq0$ such that
$$\alpha=(1^{i_1},2^{j_1},\cdots,1^{i_k},2^{j_k},1,2),\quad \beta=(1^{i_1},2^{j_1},\cdots,1^{i_k},2^{j_k+1},1).$$
Since $\varphi$ is multiplication-preserving, we have  $\varphi({\mathcal{S}}_2{\mathcal{S}}_\alpha)
=\varphi({\mathcal{S}}_2)\varphi({\mathcal{S}}_\alpha)
={\mathcal{S}}_2{\mathcal{S}}_\beta$, contradicting Lemma \ref{lem:s2progucts12121}. Thus $\alpha=\beta$, as required.
\end{proof}

\begin{lemma}\label{lem:qsfidvarphis}
Let $f$ be a graded coalgebra automorphism that preserves the quasisymmetric Schur basis.
If  $f({\mathcal{S}}_{11})={\mathcal{S}}_{11}$, $f({\mathcal{S}}_{2})={\mathcal{S}}_{2}$,
then $f$ is the identity map.
\end{lemma}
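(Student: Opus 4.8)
The plan is to prove by induction on the weight $|\alpha|$ that $f(\mathcal{S}_{\alpha})=\mathcal{S}_{\alpha}$ for every composition $\alpha$, the hypothesis together with Lemma \ref{lemma:identitymapgrdaed} supplying the cases $|\alpha|\le 2$. For the inductive step, fix $\alpha$ with $|\alpha|=n\ge 3$, write $f(\mathcal{S}_{\alpha})=\mathcal{S}_{\beta}$ with $|\beta|=n$, and exploit $(f\otimes f)\Delta(\mathcal{S}_{\alpha})=\Delta(\mathcal{S}_{\beta})$. Writing $\Delta(\mathcal{S}_{\alpha})=1\otimes\mathcal{S}_{\alpha}+\mathcal{S}_{\alpha}\otimes 1+\sum_{\mu,\nu\neq\emptyset}C_{\mu\nu}^{\alpha}\,\mathcal{S}_{\mu}\otimes\mathcal{S}_{\nu}$ from Eq.\eqref{eq:deltasagammacoprod}, applying $f\otimes f$, and invoking the induction hypothesis on the tensor factors (all of weight $<n$), a cancellation of $1\otimes\mathcal{S}_{\beta}$ and $\mathcal{S}_{\beta}\otimes 1$ together with linear independence of the $\mathcal{S}_{\mu}\otimes\mathcal{S}_{\nu}$ forces $C_{\mu\nu}^{\alpha}=C_{\mu\nu}^{\beta}$ for all nonempty compositions $\mu,\nu$ with $|\mu|+|\nu|=n$. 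Taking $\mu=(1)$ and using $C_{1,\nu}^{\gamma}=1\iff\nu\prec_{C}\gamma$ from Eq.\eqref{eq:nlrcnvsp}, this gives $D_{\leq_{C}}(\alpha)=D_{\leq_{C}}(\beta)$, which is precisely the hypothesis of Proposition \ref{lembgdcovers}.

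Suppose for contradiction that $\alpha\neq\beta$. Then Proposition \ref{lembgdcovers} puts the pair $\{\alpha,\beta\}$ into one of its three explicit shapes, and it suffices to find, in each, a pair $(\mu,\nu)$ for which $C_{\mu\nu}^{\alpha}$ and $C_{\mu\nu}^{\beta}$ are $0$ and $1$ in some order. I will always choose $\mu=(1^{p})$, so that the structure constant is computed by the vertical-strip rule of Eq.\eqref{eq:nlrcnvsp}: $C_{1^{p},\nu}^{\gamma}=1$ exactly when $\nu\leq_{C}\gamma$, $|\gamma/\hspace{-1.2mm}/\nu|=p$, and $\gamma/\hspace{-1.2mm}/\nu$ is a vertical strip. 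The shape $\{(2),(1,1)\}$ cannot occur because $n\ge 3$. For $\{\alpha,\beta\}=\{(3),(1,2)\}$ (so $n=3$) take $p=2$ and $\nu=(1)$: the skew reverse composition shape $(3)/\hspace{-1.2mm}/(1)$ has both its cells in one row and is not a vertical strip, whereas $(1,2)/\hspace{-1.2mm}/(1)$ has one cell in each of two rows and is a vertical strip, so $C_{(1,1),(1)}^{(3)}=0\neq 1=C_{(1,1),(1)}^{(1,2)}$. For the remaining shape, $\alpha=(a+1,1,b_{1},\dots,b_{k})$ and $\beta=(1,a,1,b_{1},\dots,b_{k})$ with $a\in\{1,2\}$, let $\gamma_{0}=(a,1,b_{1},\dots,b_{k})$ (the unique element covered by both $\alpha$ and $\beta$), take $p=2$, and put $\nu=(1^{a},b_{1},\dots,b_{k})$. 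Then $|\alpha/\hspace{-1.2mm}/\nu|=|\beta/\hspace{-1.2mm}/\nu|=2$; drawing $\nu$ in the bottom-left corner one checks, in both cases $a=1$ and $a=2$, that $\alpha/\hspace{-1.2mm}/\nu$ consists of two cells in the top row of $\alpha$, hence is not a vertical strip, whereas $\beta/\hspace{-1.2mm}/\nu$ consists of one cell in each of the top two rows of $\beta$, hence is a vertical strip; moreover $\nu\prec_{C}\gamma_{0}\prec_{C}\beta$, so $\nu\leq_{C}\beta$. Therefore $C_{(1,1),\nu}^{\alpha}=0\neq 1=C_{(1,1),\nu}^{\beta}$, contradicting $C_{\mu\nu}^{\alpha}=C_{\mu\nu}^{\beta}$. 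Hence $\alpha=\beta$, which closes the induction.

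The outer scaffolding --- induction, reduction to the reduced coproduct, and then to an equality of noncommutative Littlewood--Richardson coefficients --- is parallel to the argument for the monomial basis in Proposition \ref{lemma:kcoalgautoast}, and the reduction to Proposition \ref{lembgdcovers} is exactly what Section \ref{sec:perposet3} was built for. The real work, and the step I expect to be the main obstacle, is the case analysis for the last shape of Proposition \ref{lembgdcovers}: one must carefully draw the inner composition $\nu=(1^{a},b_{1},\dots,b_{k})$ in the reverse composition diagrams of $\alpha$ and of $\beta$, keeping track of the fact that $\ell(\nu)$, $\ell(\alpha)$ and $\ell(\beta)$ differ by $0$ or $1$ in ways depending on $a$, verify for both $a=1$ and $a=2$ that precisely one of the two skew reverse composition shapes is a vertical strip, and confirm the short saturated chain $\nu\prec_{C}\gamma_{0}\prec_{C}\beta$ in $\mathcal{L}_{C}$. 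Everything else is routine.
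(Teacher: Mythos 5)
Your proposal is correct and follows essentially the same route as the paper's proof: induction reduces the problem to an equality of noncommutative Littlewood--Richardson coefficients, the $C_{1,\nu}^{\gamma}$ give $D_{\leq_C}(\alpha)=D_{\leq_C}(\beta)$, Proposition \ref{lembgdcovers} pins down the possible pairs, and in each case the coefficient $C_{(1,1),\nu}^{\cdot}$ with your $\nu=(1^{a},b_1,\dots,b_k)$ (which is exactly the paper's choice of inner shape) separates $\alpha$ from $\beta$ via the vertical-strip rule. The only differences are notational, plus your somewhat more explicit verification of the skew shapes.
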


\begin{proof}
It suffices to show that $f({\mathcal{S}}_{\gamma})={\mathcal{S}}_{\gamma}$ for all compositions $\gamma$.
We proceed by induction on $|\gamma|$. If $|\gamma|\leq 2$, the result is immediate.  Now let $n=|\gamma|\geq3$, and assume that
for all $\alpha$ with $|\alpha|<n$, we have $f({\mathcal{S}}_{\alpha})={\mathcal{S}}_{\alpha}$.

Suppose to the contrary that $f({\mathcal{S}}_{\gamma})\neq{\mathcal{S}}_{\gamma}$. Then there exists a composition $\delta$ of weight
$n$ such that $\delta\neq\gamma$ and $f({\mathcal{S}}_{\gamma})={\mathcal{S}}_{\delta}$. Thus, by the induction hypothesis, together with Eq.\eqref{eq:deltasagammacoprod},
we have
\begin{align*}
(f\otimes f)\Delta({\mathcal{S}}_{\gamma})=\sum_{\alpha,\beta}{C}_{\alpha\beta}^\gamma f({\mathcal{S}}_{\alpha})\otimes f({\mathcal{S}}_{\beta})
=1\otimes {\mathcal{S}}_{\delta}
+\sum_{{\alpha\neq\emptyset,\beta\neq\emptyset}}{C}_{\alpha\beta}^\gamma {\mathcal{S}}_{\alpha}\otimes {\mathcal{S}}_{\beta}
+{\mathcal{S}}_{\delta}\otimes1
\end{align*}
where the sum is over all compositions $\alpha,\beta$.
On the other hand,
\begin{align*}
\Delta(f({\mathcal{S}}_{\gamma}))=\Delta({\mathcal{S}}_{\delta})=1\otimes {\mathcal{S}}_{\delta}
+\sum_{{\alpha\neq\emptyset,\beta\neq\emptyset}}{C}_{\alpha\beta}^\delta {\mathcal{S}}_{\alpha}\otimes {\mathcal{S}}_{\beta}
+{\mathcal{S}}_{\delta}\otimes1.
\end{align*}
By comparing coefficients we obtain that ${C}_{\alpha\beta}^{\gamma}={C}_{\alpha\beta}^{\delta}$ for all
$\alpha,\beta$ with $\alpha\neq\emptyset$, $\beta\neq\emptyset$.

By Eq.\eqref{eq:nlrcnvsp},
 $\beta\prec_C\gamma$ is equivalent to the condition that ${C}_{1,\beta}^{\gamma}=1$, whence  $D_{\leq_{C}}(\gamma)=D_{\leq_{C}}(\delta)$.
Note that $n\geq3$. So, without loss of generality, by Proposition \ref{lembgdcovers},
we have the following two cases:

{\emph{Case 1.}} $\delta=(3)$, $\gamma=(1,2)$. Let $\beta=(1)$. Then $\beta\leq_{C}\delta$ and $\beta\leq_{C}\gamma$.
Note that, as shown below,
\begin{center}
\begin{tabular}{c}
$\delta/\hspace{-1.2mm}/\beta=$\ \begin{ytableau}
\bullet & & \\
\end{ytableau}
\end{tabular}
\qquad
\begin{tabular}{c}
$\gamma/\hspace{-1.2mm}/\beta=$\ \begin{ytableau}
  \\
\bullet & \\
\end{ytableau}
\end{tabular}
\end{center}
the skew reverse composition shape $\gamma/\hspace{-1.2mm}/\beta$ is a vertical strip, while $\delta/\hspace{-1.2mm}/\beta$ is not.
So, by Eq.\eqref{eq:nlrcnvsp}, ${C}_{11,\beta}^{\delta}=0$ while ${C}_{11,\beta}^{\gamma}=1$,
contradicting ${C}_{\alpha\beta}^{\gamma}={C}_{\alpha\beta}^{\delta}$ for all
$\alpha,\beta$ with $\alpha\neq\emptyset$, $\beta\neq\emptyset$.

{\emph{Case 2.}} $\delta=(a+1,1,b_1,\cdots,b_k)$, $\gamma=(1,a,1,b_1,\cdots,b_k)$ where $a\in\{1,2\}$, $k\geq0$ and  $b_1,b_2,\cdots,b_k$ are positive integers satisfying
\begin{align*}
b_{j}\leq \max\{b_{i}+1|i=0,1,\cdots,j-1\}
\end{align*}
for all $1\leq j\leq k$,
with the notation $b_0=1$.
Let
\begin{align*}
\beta=\begin{cases}
(1,b_1,\cdots,b_k)&   if\ a=1,\\
(1,1,b_1,\cdots,b_k)& if\ a=2.
\end{cases}
\end{align*}
Then,  we have $\beta\leq_{C}\delta$, $\beta\leq_{C}\gamma$.
Note that $\gamma/\hspace{-1.2mm}/\beta$ is a vertical strip, while $\delta/\hspace{-1.2mm}/\beta$ is not.
Thus, ${C}_{11,\beta}^{\delta}=0$, ${C}_{11,\beta}^{\gamma}=1$, again a contradiction.

Therefore, $f({\mathcal{S}}_{\gamma})={\mathcal{S}}_{\gamma}$ for all compositions $\gamma$,
and the proof follows.
\end{proof}

In order to show the rigidity for ${\rm {\Qsym}}$ as a coalgebra with respect to the quasisymmetric Schur basis, we need the following lemma.
\begin{lemma}{\rm$($Corollary 6.9 in \cite{HLMW11}$)$}\label{lem:s=fquasisy}
Let $\alpha$ be a composition. Then $\mathcal{{S}}_{\alpha}=F_{\alpha}$ if and only if
$\alpha=(m, 1^{e_1},2,1^{e_2},\cdots,2,1^f)$ where $m,f,e_i$ are nonnegative integers such that $m\neq1$, $f\geq0$ and $e_i\geq1$ for all $i$.
\end{lemma}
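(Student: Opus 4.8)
The plan is to compare the monomial expansions of the two sides. Since $F_\alpha=\sum_{\gamma\preceq\alpha}M_\gamma$ and, writing $d_{\alpha\gamma}$ for the number of $\mathrm{SSRCT}$ of shape $\alpha$ with content $\gamma$ (the tableaux of Definition~\ref{defnSSRCT}, grouped by their sequences of nonzero entry multiplicities — a strictly monotone relabelling of entries preserves all three $\mathrm{SSRCT}$ conditions and shows this count depends only on the composition $\gamma$), one has $\mathcal{S}_\alpha=\sum_{\gamma}d_{\alpha\gamma}M_\gamma$, so the lemma amounts to $d_{\alpha\gamma}=[\gamma\preceq\alpha]$ for all $\gamma$ (where $[\,\cdot\,]$ is $1$ on true statements, $0$ otherwise). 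First I would record that $d_{\alpha\gamma}\geq1$ whenever $\gamma\preceq\alpha$: splitting $\gamma=\gamma^{(1)}\cdot\gamma^{(2)}\cdots\gamma^{(\ell)}$ with $\gamma^{(i)}\models\alpha_i$ (forced since $\Set(\alpha)\subseteq\Set(\gamma)$) and filling row $i$ with the block of consecutive values $\{\,r_1+\cdots+r_{i-1}+1,\dots,r_1+\cdots+r_i\,\}$, $r_i=\ell(\gamma^{(i)})$, weakly decreasingly with multiplicities read off $\gamma^{(i)}$, produces a valid $\mathrm{SSRCT}$ — the triple condition holds vacuously because every entry of row $i$ is strictly smaller than every entry of a lower row. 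Hence $\mathcal{S}_\alpha-F_\alpha=\sum_\gamma\bigl(d_{\alpha\gamma}-[\gamma\preceq\alpha]\bigr)M_\gamma$ has nonnegative coefficients, so $\mathcal{S}_\alpha=F_\alpha$ if and only if $\#\{\mathrm{SSRCT}\text{ of shape }\alpha\}=\#\{\gamma:\gamma\preceq\alpha\}=2^{|\alpha|-\ell(\alpha)}$. This reduces both directions to counting/constructing tableaux.

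For the ``if'' direction I would use the equivalent description of the admissible shapes: $\alpha_j\leq2$ for all $j\geq2$, and every $j\geq2$ with $\alpha_j=2$ has $\alpha_{j-1}=1$. The crux is that in any $\mathrm{SSRCT}$ $\tau$ of such a shape, $\max(\text{row }i)<\min(\text{row }i+1)$ for every $i$: if row $i+1$ has one cell this is the strict first-column condition; if row $i+1$ has two cells then, by the shape, row $i$ has a single cell, so $(i,2)\notin\alpha$ and the triple condition forces $\tau(i,1)<\tau(i+1,2)=\min(\text{row }i+1)$. Therefore the entries of $\tau$ occupy consecutive integer intervals $I_1<I_2<\cdots$, one per row; within row $i$ the $\alpha_i$ cells must carry the values of $I_i$ in weakly decreasing order, so $\tau$ is completely determined by a tuple of compositions $\gamma^{(i)}\models\alpha_i$, and all remaining instances of the triple condition (between non-adjacent rows, or involving the first row) are vacuous since $\tau(i,1)=\max I_i<\min I_j\leq\tau(j,2)$ for $i<j$. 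Combined with the construction above, this is a bijection between $\mathrm{SSRCT}$ of shape $\alpha$ and such tuples, whence $\#\{\mathrm{SSRCT}\text{ of shape }\alpha\}=\prod_i 2^{\alpha_i-1}=2^{|\alpha|-\ell(\alpha)}$, and in fact $d_{\alpha\gamma}=[\gamma\preceq\alpha]$.

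For the converse I would argue contrapositively. If $\alpha$ is not of the stated form there is an index $i\geq2$ with either (A) $\alpha_i\geq3$, or (B) $\alpha_i=2$ and $\alpha_{i-1}\geq2$; fix the \emph{leftmost} such $i$, so that $\alpha_2,\dots,\alpha_{i-1}$ are already controlled. Now rows $i-1$ and $i$ are too wide for the interval argument, and the triple condition between them has slack, which one exploits to build an $\mathrm{SSRCT}$ in which a value repeats across two rows. For type (B) one keeps the rows above $i-1$ canonical, promotes the entries of rows $1,\dots,i-1$ by one, and lets row $i$ reuse the freed value (model case: shape $(2,2)$ filled with first row $(2,2)$ and second row $(3,1)$, content $(1,2,1)\not\preceq(2,2)$); for type (A) one instead lets the long row $i$ reuse a small value also occurring in an upper row (model case: shape $(1,3)$ filled with first row $(1)$ and second row $(2,2,1)$, content $(2,2)\not\preceq(1,3)$); and in the boundary subcase of (A) where $\alpha_{i-1}=\alpha_i-1$ the naive witness violates the triple condition, so one instead swaps the value-blocks of two rows to produce a content $\gamma\preceq\alpha$ realised by two distinct tableaux (model case: shape $(1,2,3)$, content $(1,2,2,1)$). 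In every case $\#\{\mathrm{SSRCT}\text{ of shape }\alpha\}>2^{|\alpha|-\ell(\alpha)}$, so $\mathcal{S}_\alpha\neq F_\alpha$.

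The genuinely hard part is this last step: producing a witnessing $\mathrm{SSRCT}$ uniformly for every shape that is not of the stated form, and verifying the triple condition against all rows lying above the rows one has altered. I would therefore organise the converse around the leftmost bad adjacent pair $(\alpha_{i-1},\alpha_i)$, split into types (A) and (B), and within (A) split further according to whether $\alpha_{i-1}<\alpha_i-1$, $\alpha_{i-1}=\alpha_i-1$, or $\alpha_{i-1}\geq\alpha_i$; the obstacle I expect is precisely making the small handful of explicit fillings legal in the subcases where the obvious construction is blocked by the triple condition.
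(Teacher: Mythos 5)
The paper offers no proof of this lemma---it is quoted as Corollary 6.9 of \cite{HLMW11}---so there is nothing internal to compare against; your attempt must stand on its own. The reduction is correct: writing $\mathcal{S}_\alpha=\sum_\gamma d_{\alpha\gamma}M_\gamma$ and exhibiting one canonical $\SSRCT$ for each refinement $\gamma\preceq\alpha$ shows $\mathcal{S}_\alpha-F_\alpha$ is $M$-nonnegative, so equality is a counting statement. Your ``if'' direction is also complete and correct: for the admissible shapes every row $j\geq2$ has at most two cells, the triple condition collapses to the case $k=1$ and forces $\max(\text{row }i)<\min(\text{row }i+1)$, and the interval argument gives the bijection with refinements.

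The converse, however, is only a sketch, and the part you yourself flag as the hard part contains claims that are actually false, not merely unverified. First, your model for the boundary subcase is wrong: the shape $(1,2,3)$ admits exactly \emph{one} $\SSRCT$ of content $(1,2,2,1)$, not two. The triple condition at $(i,j,k)=(2,3,2)$ (using $(2,3)\notin\alpha$) forces $\tau(2,2)<\tau(3,3)$; the triple at $(2,3,1)$ then forces $\tau(2,1)<\tau(3,2)$; together with $\tau(1,1)<\tau(2,2)$ this pins the filling down to the canonical $(1),(2,2),(4,3,3)$. Second, your type (B) recipe fails one step beyond its model case: for the shape $(1,2,2)$, ``promote rows $1,2$ and let row $3$ reuse the freed value'' yields $(2),(3,3),(4,1)$, and the entry $1$ in cell $(3,2)$ together with $\tau(1,1)=2\geq1$ triggers the triple condition against row $1$, demanding the nonexistent cell $(1,2)$. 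A valid witness exists---e.g.\ $(1),(3,3),(4,2)$ of content $(1,1,2,1)\not\preceq(1,2,2)$---but your recipe does not produce it. The type (A) recipe breaks similarly as soon as the rows above are wider than one cell: for $(2,3)$ the filling $(1,1),(2,2,1)$ violates the triple at $k=2$, and the actual witness is something like $(2,1),(3,3,2)$ of content $(1,2,2)$. So the converse still lacks what it needs: a uniformly defined witness tableau for \emph{every} excluded shape, verified against all rows above the modified ones. As it stands, the only safe course is to do what the paper does and cite Corollary 6.9 of \cite{HLMW11}, or to redo the converse with constructions checked in the generality required.
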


\begin{lemma}\label{lem:nomapqsf}
There is no graded coalgebra automorphism $f$ preserving the quasisymmetric Schur basis such that $f({\mathcal{S}}_{11})={\mathcal{S}}_{2}$,
$f({\mathcal{S}}_{2})={\mathcal{S}}_{11}$.
\end{lemma}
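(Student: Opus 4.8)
The plan is to assume such an $f$ exists and derive a contradiction, first pinning down $f$ completely in weight $3$ and then testing it on the weight-$4$ composition $(1,3)$. Write $f(\mathcal{S}_\alpha)=\mathcal{S}_{\pi(\alpha)}$, where $\pi$ is the weight-preserving bijection of $\mathcal{C}$ induced by $f$. By Lemma \ref{lemma:identitymapgrdaed} and the hypothesis we already have $\pi(\emptyset)=\emptyset$, $\pi(1)=(1)$, $\pi(11)=(2)$, $\pi(2)=(1,1)$, so $f\otimes f$ fixes every tensor assembled from $\mathcal{S}_1$ and interchanges $\mathcal{S}_2$ with $\mathcal{S}_{11}$.

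First I would compute the reduced coproducts in weight $3$. Every composition of weight at most $3$ has the shape required in Lemma \ref{lem:s=fquasisy}, so $\mathcal{S}_\alpha=F_\alpha$ whenever $|\alpha|\le 3$; feeding this into Eq.\eqref{qsprocop} yields, for $\overline{\Delta}(\mathcal{S}_\gamma):=\Delta(\mathcal{S}_\gamma)-1\otimes\mathcal{S}_\gamma-\mathcal{S}_\gamma\otimes 1$ with $\gamma\models 3$, the four values $\mathcal{S}_1\otimes\mathcal{S}_2+\mathcal{S}_2\otimes\mathcal{S}_1$, $\mathcal{S}_1\otimes\mathcal{S}_{11}+\mathcal{S}_{11}\otimes\mathcal{S}_1$, $\mathcal{S}_1\otimes\mathcal{S}_{11}+\mathcal{S}_2\otimes\mathcal{S}_1$, $\mathcal{S}_1\otimes\mathcal{S}_2+\mathcal{S}_{11}\otimes\mathcal{S}_1$ attained at $(3)$, $(1,1,1)$, $(2,1)$, $(1,2)$ respectively, and these four elements are pairwise distinct. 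Since $f$ is a coalgebra morphism, $\overline{\Delta}(\mathcal{S}_{\pi(\gamma)})=(f\otimes f)\overline{\Delta}(\mathcal{S}_\gamma)$, and applying $f\otimes f$ to the four values interchanges the first two and interchanges the last two; by the distinctness just noted this forces $\pi(3)=(1,1,1)$, $\pi(1,1,1)=(3)$, $\pi(2,1)=(1,2)$, $\pi(1,2)=(2,1)$.

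The contradiction then comes from $(1,3)$. By Eq.\eqref{eq:deltasagammacoprod} and the fact recorded after Eq.\eqref{eq:nlrcnvsp} that $C_{1,\beta}^{\gamma}=1$ exactly when $\beta\prec_{C}\gamma$, the ${\rm QSym}_1\otimes{\rm QSym}_3$-homogeneous component of $\Delta(\mathcal{S}_{13})$ equals $\sum_{\beta\prec_{C}(1,3)}\mathcal{S}_1\otimes\mathcal{S}_\beta$; reading off Definition \ref{reversecompposet} one checks $D_{\leq_{C}}((1,3))=\{(3),(1,2)\}$, so this component is $\mathcal{S}_1\otimes\mathcal{S}_3+\mathcal{S}_1\otimes\mathcal{S}_{12}$. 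Applying the graded map $f\otimes f$ and using the previous paragraph, this component becomes $\mathcal{S}_1\otimes\mathcal{S}_{111}+\mathcal{S}_1\otimes\mathcal{S}_{21}$. On the other hand it must equal the ${\rm QSym}_1\otimes{\rm QSym}_3$-component of $\Delta(\mathcal{S}_{\pi(1,3)})=\sum_{\beta\prec_{C}\pi(1,3)}\mathcal{S}_1\otimes\mathcal{S}_\beta$, whence $D_{\leq_{C}}(\pi(1,3))=\{(1,1,1),(2,1)\}$. This is impossible: by Definition \ref{reversecompposet} a composition covering $(1,1,1)$ must be $(1,1,1,1)$ or $(2,1,1)$, while a composition covering $(2,1)$ must be one of $(1,2,1)$, $(3,1)$, $(2,2)$, and these two lists are disjoint, so no composition covers both $(1,1,1)$ and $(2,1)$. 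Hence no such $f$ exists.

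The crux is the weight-$3$ step: the ``covered-by'' data $D_{\leq_{C}}(\gamma)$ by itself fails to separate $(3)$ from $(1,2)$ and $(1,1,1)$ from $(2,1)$, so one genuinely needs the full reduced coproduct, and the cleanest way to obtain it is to invoke Lemma \ref{lem:s=fquasisy} and work with the transparent $F$-coproduct in low degree. Everything after that is an immediate computation inside the poset $\mathcal{L}_{C}$ using Definition \ref{reversecompposet}.
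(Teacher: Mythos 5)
Your proposal is correct and follows essentially the same route as the paper: both use Lemma \ref{lem:s=fquasisy} to identify $\mathcal{S}_\alpha$ with $F_\alpha$ in weight at most $3$, deduce from the $F$-coproduct that $f$ must act by complementation on all weight-$3$ labels, and then derive the contradiction from $\mu=(1,3)$ by observing that no weight-$4$ composition covers both $(1,1,1)$ and $(2,1)$ in $\mathcal{L}_C$ (equivalently, that $C_{1,111}^{\delta}=C_{1,21}^{\delta}=1$ holds for no $\delta$). The only cosmetic difference is that you organize the weight-$3$ step as a pairwise-distinctness argument on reduced coproducts, whereas the paper matches $\Delta(F_\sigma)$ term by term; the content is identical.
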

\begin{proof}
Let $f$ be a graded coalgebra automorphism that preserves the quasisymmetric Schur basis such that $f({\mathcal{S}}_{11})={\mathcal{S}}_{2}$,
$f({\mathcal{S}}_{2})={\mathcal{S}}_{11}$. It follows from Lemma \ref{lem:s=fquasisy} that ${\mathcal{S}}_{\alpha}=F_{\alpha}$ for all compositions $\alpha$ with $|\alpha|\leq3$.
Thus, by Eq.\eqref{qsprocop}, one has
\begin{align*}
\Delta({\mathcal{S}}_{3})=\Delta(F_{3})
=&1\otimes F_{3}+F_{1}\otimes F_{2}+F_{2}\otimes F_{1}+F_{3}\otimes 1\cr
=&1\otimes {\mathcal{S}}_{3}+{\mathcal{S}}_{1}\otimes {\mathcal{S}}_{2}
+{\mathcal{S}}_{2}\otimes {\mathcal{S}}_{1}+{\mathcal{S}}_{3}\otimes 1.
\end{align*}
If $f({\mathcal{S}}_{3})={\mathcal{S}}_{\sigma}$ for some $\sigma\models 3$, then
we have $F_{\sigma}={\mathcal{S}}_{\sigma}$ and hence
\begin{align*}
\Delta(F_{\sigma})=&\Delta({\mathcal{S}}_{\sigma})=\Delta(f({\mathcal{S}}_{3}))=(f\otimes f)\Delta({\mathcal{S}}_{3})\cr
=&1\otimes {\mathcal{S}}_{\sigma}+{\mathcal{S}}_{1}\otimes {\mathcal{S}}_{11}
+{\mathcal{S}}_{11}\otimes {\mathcal{S}}_{1}+{\mathcal{S}}_{\sigma}\otimes 1\cr
=&1\otimes F_{\sigma}+F_{1}\otimes F_{11}+F_{11}\otimes F_{1}+F_{\sigma}\otimes 1.
\end{align*}
Thus, $\sigma=(1,1,1)$ by Eq.\eqref{qsprocop}, that is, $f({\mathcal{S}}_{3})={\mathcal{S}}_{111}$.
Similarly, we can obtain that $f({\mathcal{S}}_{111})={\mathcal{S}}_{3}$, $f({\mathcal{S}}_{12})={\mathcal{S}}_{21}$
and $f({\mathcal{S}}_{21})={\mathcal{S}}_{12}$.
Consequently, $f({\mathcal{S}}_{\gamma})={\mathcal{S}}_{\gamma^c}$ for all compositions $\gamma$ with $|\gamma|\leq3$.

Let $\mu$ be any composition of $4$, and set $f({\mathcal{S}}_{\mu})={\mathcal{S}}_{\delta}$ for some $\delta$ with $|\delta|=4$.
Directly comparing coefficients at the summands $\mathcal{S}_{1}\otimes {\mathcal{S}}_{\beta}$ for $|\beta|=3$ on both sides of the equation
$$
\Delta({\mathcal{S}}_{\delta})=\Delta(f({\mathcal{S}}_{\mu}))=(f\otimes f)\Delta({\mathcal{S}}_{\mu}),
$$
we see from Eq.\eqref{eq:deltasagammacoprod} that ${C}_{1,\beta}^{\delta}={C}_{1,\beta^c}^{\mu}$ for all compositions $\beta$ of weight $3$.
Then Eq.\eqref{eq:nlrcnvsp} gives the contradiction for $\mu=(1,3)$ as ${C}_{1,3}^{13}={C}_{1,12}^{13}=1$, but ${C}_{1,111}^{\delta}={C}_{1,21}^{\delta}=1$ for no $\delta$.
This completes the proof.
\end{proof}

\begin{proposition}\label{prop:qsfcoalg}
The identity map is the only graded coalgebra automorphism of ${\rm {\Qsym}}$ that preserves the set of quasisymmetric Schur functions.
\end{proposition}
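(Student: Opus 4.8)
The plan is to obtain this proposition as a short corollary of Lemmas \ref{lem:qsfidvarphis} and \ref{lem:nomapqsf}, the real work having already been localized in those two lemmas; the only thing left is a case split on the degree-$2$ behavior. First I would note that a graded coalgebra automorphism $f$ of ${\rm {\Qsym}}$ preserving the set of quasisymmetric Schur functions is, in particular, a graded bijection preserving the basis $\{{\mathcal{S}}_{\alpha}\mid\alpha\in\mathcal{C}\}$, so Lemma \ref{lemma:identitymapgrdaed} applies with $K=\mathcal{S}$. It yields $f({\mathcal{S}}_{\emptyset})={\mathcal{S}}_{\emptyset}$, $f({\mathcal{S}}_{1})={\mathcal{S}}_{1}$, and exactly two options on the degree-$2$ component: either $f$ fixes both ${\mathcal{S}}_{11}$ and ${\mathcal{S}}_{2}$, or $f$ interchanges them.

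Next I would eliminate the interchange option: this is exactly what Lemma \ref{lem:nomapqsf} forbids. The remaining option is $f({\mathcal{S}}_{11})={\mathcal{S}}_{11}$, $f({\mathcal{S}}_{2})={\mathcal{S}}_{2}$, and there Lemma \ref{lem:qsfidvarphis} applies directly and forces $f$ to be the identity on all of ${\rm {\Qsym}}$. Since the identity is visibly such an automorphism, the proof would then be complete.

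The main obstacle is not in the proposition itself but in the two lemmas it rests on. In Lemma \ref{lem:qsfidvarphis} one runs an induction on $|\gamma|$: at the inductive step one compares $\Delta({\mathcal{S}}_{\gamma})$ with $\Delta({\mathcal{S}}_{\delta})$ for the hypothetical image ${\mathcal{S}}_{\delta}$, reads off from the noncommutative Pieri rule Eq.\eqref{eq:nlrcnvsp} that the lower covers agree, $D_{\leq_{C}}(\gamma)=D_{\leq_{C}}(\delta)$, and then invokes the classification of such coincidences in $\mathcal{L}_{C}$ from Proposition \ref{lembgdcovers} together with a vertical-strip obstruction (comparing ${C}_{11,\beta}^{\gamma}$ with ${C}_{11,\beta}^{\delta}$ for a suitable $\beta$) to rule out $\gamma\neq\delta$; this combinatorial bookkeeping in the reverse composition poset is the delicate part. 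Ruling out the degree-$2$ swap in Lemma \ref{lem:nomapqsf} is the other delicate step, handled by exploiting the identifications ${\mathcal{S}}_{\alpha}=F_{\alpha}$ in low degree (Lemma \ref{lem:s=fquasisy}) and testing the coproduct identity on the composition $\mu=(1,3)$.
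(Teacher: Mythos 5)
Your proposal is correct and matches the paper's own proof: the paper likewise deduces from Lemma \ref{lem:nomapqsf} that any such automorphism must fix the degree-$2$ component (the dichotomy coming from Lemma \ref{lemma:identitymapgrdaed}) and then concludes via Lemma \ref{lem:qsfidvarphis}. Your summary of the supporting lemmas' arguments is also accurate.
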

\begin{proof}
By Lemma \ref{lem:nomapqsf}, any graded coalgebra automorphism of ${\rm {\Qsym}}$ must be the identity map in degree $2$,
which together with Lemma \ref{lem:qsfidvarphis} gives the desired conclusion.
\end{proof}

From Proposition \ref{thm:alhomqsfb} or Proposition \ref{prop:qsfcoalg}, we obtain the following result.
\begin{theorem}
${\rm {\Qsym}}$ is rigid as a Hopf algebra with respect to the quasisymmetric Schur basis, i.e.,
there are no nontrivial graded Hopf algebra automorphisms that take the set of quasisymmetric Schur functions into itself.
\end{theorem}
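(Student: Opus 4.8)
The plan is to deduce the statement at once from the two rigidity results established separately for the algebra and coalgebra structures. Recall that a graded Hopf algebra automorphism of ${\rm {\Qsym}}$ is by definition both a graded algebra automorphism and a graded coalgebra automorphism. Hence, if $\varphi$ is a graded Hopf algebra automorphism of ${\rm {\Qsym}}$ taking the quasisymmetric Schur basis into itself as a graded set, then in particular $\varphi$ is a graded algebra automorphism preserving that basis, and Proposition \ref{thm:alhomqsfb} forces $\varphi=\id$; equivalently, $\varphi$ is a graded coalgebra automorphism preserving the basis, and Proposition \ref{prop:qsfcoalg} forces $\varphi=\id$. Either way there is no nontrivial such automorphism, which is the asserted rigidity, so I would present the proof as a one-line appeal to either proposition.

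No new work is needed at this stage, since the substantive content lies entirely in Propositions \ref{thm:alhomqsfb} and \ref{prop:qsfcoalg}. On the algebra side, the key input is the combinatorial classification in Proposition \ref{lem:aln=beccoversetleqq} of pairs of compositions with the same down-set in $(\mathcal{Q}_{C},\leq_Q)$, combined with the two special Pieri-type expansions of Lemma \ref{lem:s2progucts12121}, which distinguish ${\mathcal{S}}_{2}{\mathcal{S}}_{\alpha}$ from ${\mathcal{S}}_{2}{\mathcal{S}}_{\beta}$ by a simple count of terms whenever $\alpha\neq\beta$ share a $\leq_Q$-down-set. On the coalgebra side, one first pins down degree $2$ via Lemma \ref{lem:nomapqsf} (ruling out the swap ${\mathcal{S}}_{11}\leftrightarrow{\mathcal{S}}_{2}$ using $\Delta({\mathcal{S}}_{3})=\Delta(F_{3})$ and the noncommutative Littlewood--Richardson coefficients ${C}_{\alpha\beta}^{\gamma}$), and then runs the induction of Lemma \ref{lem:qsfidvarphis}, where $\beta\prec_{C}\gamma\iff{C}_{1,\beta}^{\gamma}=1$ together with Proposition \ref{lembgdcovers} and the vertical-strip criterion \eqref{eq:nlrcnvsp} produces a contradiction unless the automorphism is the identity.

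Since both routes are already complete, the only ``obstacle'' in the present theorem is purely bookkeeping: I would simply note that ``automorphism'' in the Hopf-algebraic sense really does entail the algebra-automorphism (respectively coalgebra-automorphism) hypothesis used in Proposition \ref{thm:alhomqsfb} (respectively Proposition \ref{prop:qsfcoalg}), which is immediate from the definitions, and that by the remark at the end of Section \ref{sec:background} a bialgebra endomorphism of ${\rm {\Qsym}}$ is automatically compatible with the unique antipode, so no further condition needs to be checked. Thus the proof amounts to invoking either proposition.
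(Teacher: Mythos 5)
Your proposal is correct and matches the paper's own argument exactly: the theorem is deduced in one line from Proposition \ref{thm:alhomqsfb} or Proposition \ref{prop:qsfcoalg}, since a graded Hopf algebra automorphism preserving the quasisymmetric Schur basis is in particular a graded algebra (respectively coalgebra) automorphism preserving it, hence the identity. No further comment is needed.
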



\noindent

{\bf Acknowledgements.}
This work was partially supported by  the Fundamental Research Funds for the Central Universities $($Grant No. XDJK2019C049$)$ and National Natural Science Foundation of China
$($Grant No. 11501467$)$.

%

\end{document}